\font\elevensf=cmss10 scaled\magstephalf
\newtheorem{theorem} {\bf THEOREM}[section]
\newtheorem{lemma} {\bf LEMMA}[section]
\newtheorem{corollary} {\bf COROLLARY}[section]
\newtheorem{counter example} {\bf Counter Example}[section]
\newtheorem{remark} {\bf REMARK}[section]
\newtheorem{definition} {\bf DEFINITION}[section]
\def\R{{I\!\!R}}
\def\CC{{\rm \kern.24em \vrule width.02em height1.4ex depth-.05ex \kern-.26emC}}
\def\TagOnRight
\def\AA{{it I} \hskip-3pt{\tt A}}
\def\QQ{\rlap {\raise 0.4ex \hbox{$\scriptscriptstyle |$}} {\hskip -0.1em Q}}
\newcommand{\be} {\begin{eqnarray}}
\newcommand{\ee} {\end{eqnarray}}
\newcommand{\Bea} {\begin{eqnarray*}}
\newcommand{\Eea} {\end{eqnarray*}}
\newcommand{\pa} {\partial}
\newcommand{\al} {\alpha}
\newcommand{\rr}{\rightarrow}
\newcommand{\ti}{\tilde}
\newcommand{\B} {\beta}
\newcommand{\g} {\gamma}
\newcommand{\T}  {\theta}
\newcommand{\p}  {\prime}
\newcommand{\e}  {\epsilon}
\newcommand{\f}{\infty}
\newcommand{\h}{\label}
\def\theequation{\@arabic{\c@section}.\@arabic{\c@equation}}
\title{Single  shock solution for   non convex scalar conservation laws}
\author{Adimurthi\footnote{aditi@math.tifrbng.res.in}\ \ and  \ Shyam Sundar
Ghoshal\footnote{ghoshal@math.tifrbng.res.in}\\\\
Centre for Applicable Mathematics,\\
Tata Institute of Fundamental Research,\\
Post Bag No 6503, Sharadanagar,\\
Bangalore - 560065, India.}
\date{}
\begin{document}
\maketitle

\begin{abstract}
In this paper we study the finite time emergence of one shock for the solution of scalar conservation laws in one space dimension with general flux $f$. We give a necessary and sufficient condition to the initial data connecting to  flux. The proof relies on the structure theorem for the linear degenerate flux and the finer analysis of characteristic curves. 

\noindent \textsc{MSC (2010): 35B40, 35L65, 35L67.}

\noindent \textsc{Keywords: conservation laws; characteristic lines; non convex flux;
 single shock solution, structure theorem.}
\end{abstract}

\section{Introduction} 
\setcounter{equation}{0}
Let $f:\R\rightarrow \R$ be a locally Lipschitz function and $u_0\in L^\f(\R)$. 
Consider the initial value problem 
\begin{eqnarray}
  u_t+f\left(u\right)_x&=&0,  x\in\R, \ t>0,\label{11}\\
  u(x,0)&=&u_0(x),  x\in\R.\label{12}
\end{eqnarray}
Here $f$ is  the flux function.

\par  The above equation has a  special importance, particularly in, mathematical physics and fluid dynamics, for example in river flow, flow of gas, oil recovery,  heterogeneous media,  petroleum industry,  modeling gravity, continuous sedimentation, modeling  car in  traffic flow on a highway, semiconductor industry, etc. Some of these applications finds from convex conservation laws, for example: very famous Burgers equation where $f(u)=\frac{u^2}{2}$, is an  important convex scalar conservation laws, which has several applications. On the other hand,  Buckley-Leverett equation is one of the most vital example of a non-convex scalar conservation laws,  mainly used to understand the dynamics of two-phase flow in porous  media.
Here the flux $f$ is given by $\frac{u^2}{u^2+r(1-u)^2}$, for some constant $r>0$, where the solution $u$ could represent as a water saturation, and the fraction flow function could be the flux $f$ and the constant $r$ denotes the viscosity ratio of water and oil. 

\par  (\ref{11}), (\ref{12}) admits a unique weak solution satisfying  Kruzkov  entropy condition \cite{3, 4,  8, 9, 5, 10, 6, Serre,  Whitham}.
Through out  this paper we mean the solution to (\ref{11}), (\ref{12}) in the sense of  Kruzkov.  In general, even when the data $u_0$ is smooth, the solution can have discontinuities (shock),   a shock curve is a locally Lipschitz curve. One of the fundamental  questions in this area  is to find the number of shock curves. When the flux $f\in C^2$, uniformly convex, there are many prior results to (\ref{11}).  Lax \cite{11} obtained an explicit formula (one can also see \cite{Oleinik} and for the boundary value problem see \cite{Joseph, Lefloch}) for the solution of (\ref{11}) in his seminal paper and also proved that  if the support of $u_0$
is compact,  the solution behaves like an  $N$-wave  as $t\rightarrow\f$. Also large time behaviour of the solution of (\ref{11}) has been studied in \cite{Da5, liu2}. Schaffer \cite{8} showed  later that there exists a  set $D\subset C^\f_0(\R)$ of first category such that 
if $u_0\in D^c$, then the solution $u$ admits finitely many shock curves. Tadmor and Tassa \cite{Tadmor} constructed explicitly a dense set $D^c$ for which the number of shocks are finite.
\par  Dafermos \cite{Da4, 2, Da2} introduced a landmark notion of generalized backward characteristics  to study the structure of the solution for scalar conservation laws, whereas we give a notion of forward  characteristics \cite{1, shyamcontrol, shyamop}, namely $R$ curves, has been introduced and a detailed study has been done without using the Filipov's theory \cite{Filippov}. Previously, by  using the  forward characteristics  as building blocks a detailed study of   structure Theorem for the strictly convex conservation laws has been obtained in \cite{1}. In the present paper, we  proved first  a structure Theorem for a degenerate convex flux (see definition \ref{definition41}).  This turns out to be an important step for proving finite time emergence 
of single shock solution for non convex scalar conservation laws. 

\par Another striking result  which has fundamental importance in this  paper  
 was  obtained by   Liu \cite{7} and  Dafermos-Shearer \cite{3}, in connection with the traffic flow problem. To describe it  succinctly, 
 let $A<B, \ u_0\in L^\f(\R)$ and $\bar{u}_0\in L^\f(A,B)$ such that 
\begin{eqnarray}\label{13}
u_0(x)=\left\{\begin{array}{lll}
u_- &\mbox{if}& x<A,\\
\bar{u}_0 &\mbox{if}& A<x<B,\\
u_+, &\mbox{if}& x>B,
\end{array}\right.
\end{eqnarray}
where $u_\pm$ are constants. Under the assumption 
\begin{equation}\label{14}
u_->u_+,
\end{equation}
 they showed that there exists a $T_0>0, x_0\in\R$ such that for $t>T_0$, \\ 
\begin{figure}[ht]
        \centering
        \def\svgwidth{0.8\textwidth}
        \begingroup
    \setlength{\unitlength}{\svgwidth}
  \begin{picture}(1.4,0.55363752)%
    \put(0,0.1){\includegraphics[width=0.8\textwidth]{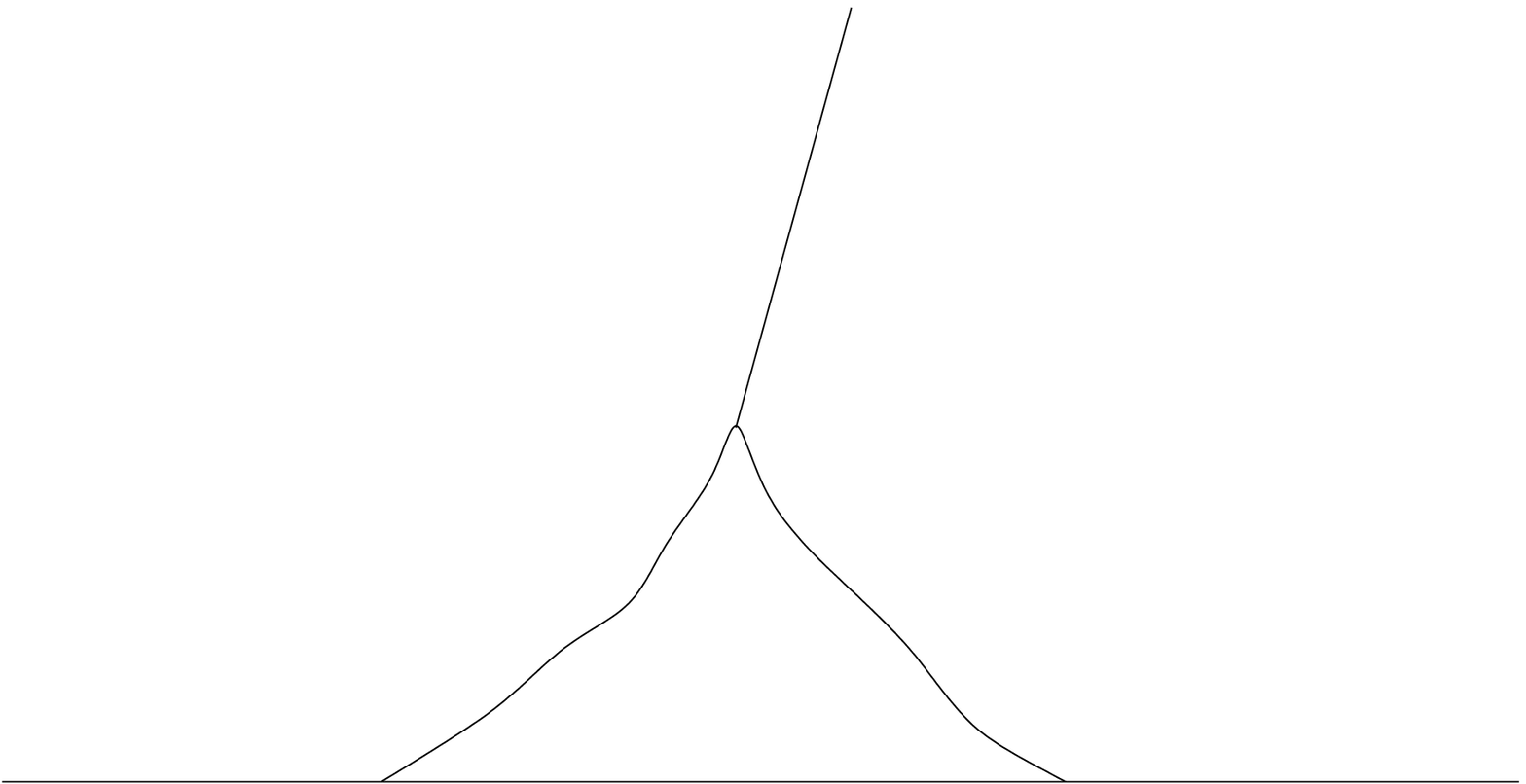}}%
    \put(0.529,0.47){\color[rgb]{0,0,0}\makebox(0,0)[lb]{\smash{$r(t):x-x_0 =\frac{f(u_-)-f(u+)}{u_--u_+}(t-T_0)$}}}%
        \put(0.3729,0.347){\color[rgb]{0,0,0}\makebox(0,0)[lb]{\smash{$(x_0,T_0)$}}}%
                \put(0.3729,0.07){\color[rgb]{0,0,0}\makebox(0,0)[lb]{\smash{$\bar{u}_0$}}}%
                                \put(0.0729,0.07){\color[rgb]{0,0,0}\makebox(0,0)[lb]{\smash{$u_-$}}}%
                                                                \put(0.7729,0.07){\color[rgb]{0,0,0}\makebox(0,0)[lb]{\smash{$u_+$}}}%
                             \put(0.2229,0.07){\color[rgb]{0,0,0}\makebox(0,0)[lb]{\smash{$A$}}}%
                                    \put(0.69229,0.07){\color[rgb]{0,0,0}\makebox(0,0)[lb]{\smash{$B$}}}%
                \put(0.090729,0.27){\color[rgb]{0,0,0}\makebox(0,0)[lb]{\smash{ $u(x,t)=u_-$}}}%
                                \put(0.690729,0.27){\color[rgb]{0,0,0}\makebox(0,0)[lb]{\smash{ $u(x,t)=u_+$}}}%

  \end{picture}
\endgroup
        \caption{Illustration for one shock solution from the point $(x_0,T_0)$, $T_0\leq \g|A-B|$}
       \label{Fig8}
\end{figure}
\begin{figure}[ht]
        \centering
        \def\svgwidth{0.8\textwidth}
        \begingroup
    \setlength{\unitlength}{\svgwidth}
  \begin{picture}(1,0.55363752)%
    \put(0,0.1){\includegraphics[width=0.8\textwidth]{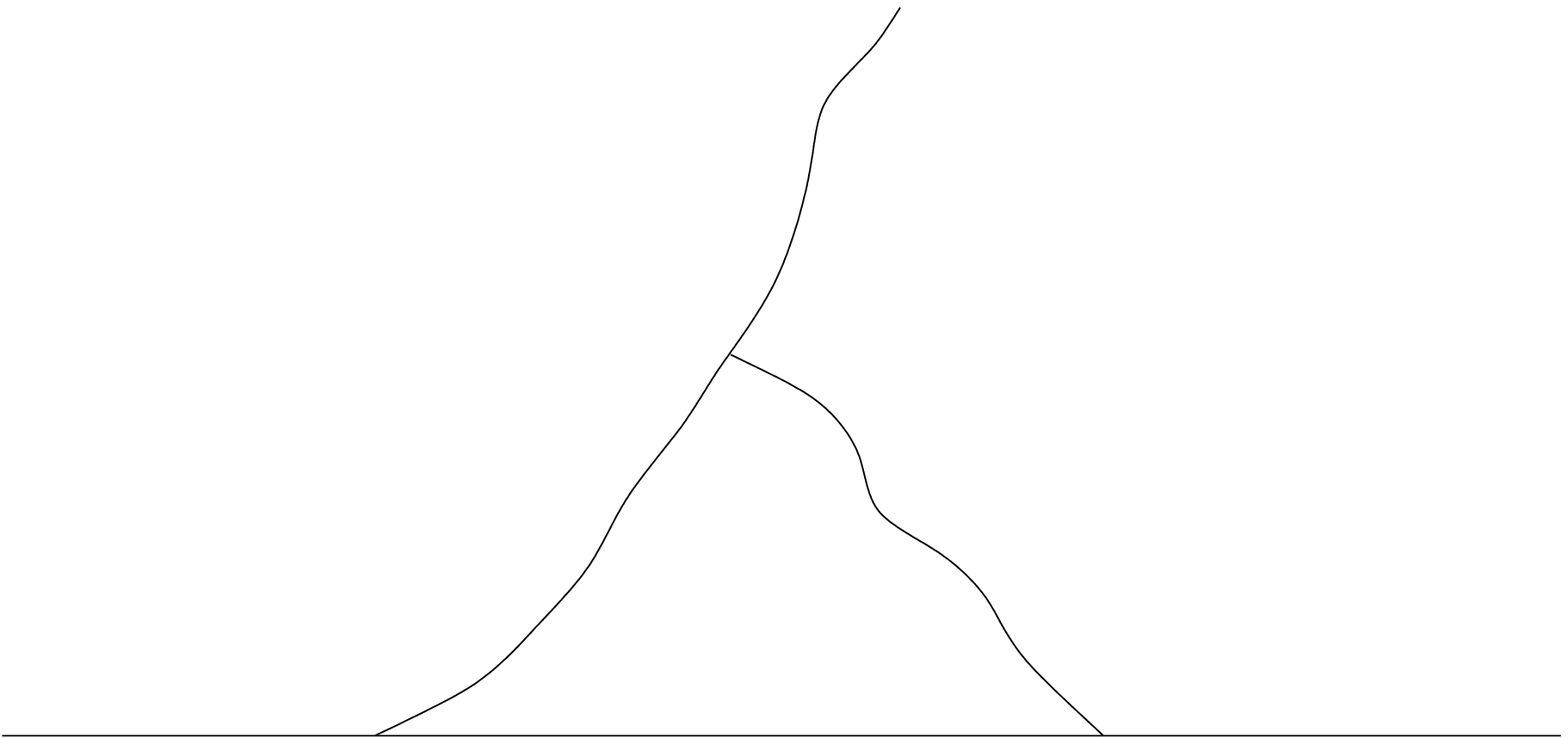}}%
    \put(0.529,0.47){\color[rgb]{0,0,0}\makebox(0,0)[lb]{\smash{$r(t)$}}}%
        \put(0.3729,0.37){\color[rgb]{0,0,0}\makebox(0,0)[lb]{\smash{$(x_0,T_0)$}}}%
                \put(0.3729,0.07){\color[rgb]{0,0,0}\makebox(0,0)[lb]{\smash{$\bar{u}_0$}}}%
                                \put(0.0729,0.07){\color[rgb]{0,0,0}\makebox(0,0)[lb]{\smash{$u_-(x)$}}}%
                                                                \put(0.7729,0.07){\color[rgb]{0,0,0}\makebox(0,0)[lb]{\smash{$u_+(x)$}}}%
                             \put(0.2229,0.07){\color[rgb]{0,0,0}\makebox(0,0)[lb]{\smash{$A$}}}%
                                    \put(0.69229,0.07){\color[rgb]{0,0,0}\makebox(0,0)[lb]{\smash{$B$}}}%
                \put(0.090729,0.27){\color[rgb]{0,0,0}\makebox(0,0)[lb]{\smash{ $u(x,t)=u_-$}}}%
                                \put(0.690729,0.27){\color[rgb]{0,0,0}\makebox(0,0)[lb]{\smash{ $u(x,t)\in \mbox{ Range }(u_+)$}}}%
  \end{picture}
\endgroup
        \caption{Illustration for one shock solution from the point $(x_0,T_0)$, $T_0\leq \g|A-B|$}
\label{Fig1}\end{figure}
\begin{eqnarray}\label{15}
 u(x,t)=\left\{\begin{array}{lll}
             u_- &\mbox{if}& x<x_0+\frac{f(u_+)-f(u_-)}{u_+-u_-}(t-T_0),\\
                          u_+ &\mbox{if}& x>x_0+\frac{f(u_+)-f(u_-)}{u_+-u_-}(t-T_0).
            \end{array}\right.
\end{eqnarray}
That is,  they proved that after  finite time there will be exactly one shock and the solution will become $u_-$ and $u_+$, this is called a single shock solution. 
 So if the left most characteristics  speed of the data is bigger than the right most characteristics speed then  these two characteristics will dominate the other waves in finite time. The main ingredient of this proof is the comparison principle of the conservation laws. 
 The case $u_-\leq u_+$ was left open in Dafermos-Shearer \cite{3}. In \cite{1}, using the finer analysis of forward characteristic curves, a unified approach has been developed to tackle both the cases, namely $u_->u_+$ and $u_-\leq u_+$,  for any $C^1$ strictly convex flux.
  
 \par Very few results are known when one consider  genuinely non linear and linear degeneracy together. When the flux function is non convex not much literature is known in this subject. Even solving a Riemann problem for flux having finitely many inflection points, one has to consider either a convex hull or a concave hull and it purely depends on the data. Therefore even for  a piecewise constant data it is difficult to keep  into account the interaction between waves.  Thus it is one of the  reasons, the structure of the solution for  non  scalar convex conservation laws are less  known.
 
 \par  In this paper we have resolved the  following  important  questions for nonconvex flux:
\begin{itemize}

\item [1.] Suppose $f$ is not convex, what is the  condition
 such that $u$ becomes single shock solution (see definition \ref{definition11} and figures \ref{Fig1}, \ref{Fig8}) in finite time?

\item [2.] Under which condition(s) $u$ admits a single shock solution when one consider $u_\pm$ to be a function in $L^\f(\R\setminus(A,B))$ instead of constant? \end{itemize}

 \par To  answer the above questions,  we first  proved the structure Theorem for the flux $f\in  C^1$ but need not be strictly convex.  Also we give a proof of a generalized version of the one shock solution, namely we not only consider constant data away from a compact set, also we consider general data $u_-
(x)$ and $u_+(x)$ and give a necessary and sufficient criteria that in a finite time $T_0$, this will be separated by a Lipschitz curve.   We have used heavily front tracking analysis, convex analysis for the degenerate convex flux and $L^1$ contraction.

\par One related topic in this direction is to understand the stability estimates. For the uniformly convex flux, $L^2$ stability result has been obtained by \cite{leger} for the shock situation. He showed $L^2$ norm of a perturbed solution can be bounded by the $L^2$ norm of initial perturbation. Later in \cite{shyam3},  $L^p$ stability for the shock and non-shock situation  has been studied using the structure Theorem \cite{1} for more general convex flux. Interested reader  can also see \cite{leger2} for the system case.
\par On a slightly different direction, structure, regularity of the entropy solution has been studied in \cite{ shyam4, Ambrosio, Bianchini, cheng1, cheng2, Delellis, Jenssen, Marconi} and the references therein for the related work. For the  flux with one inflection point, a class of  attainable sets has been obtained in \cite{Boris}. 

\subsection{Main results}

\par Before stating the main result, we need the following definitions and hypothesis. 
\begin{definition}[Single shock solution]\h{definition11}\label{sss} Let $u_\pm\in L^\f(\R)$ and $A\leq B, \bar{u}_0\in L^\f(A,B)$ and $f$ be a  locally Lipschitz function. 
\begin{eqnarray}\label{16}
u_0(x)=\left\{\begin{array}{lll}
u_-(x) &\mbox{if}& x<A,\\
\bar{u}_0(x) &\mbox{if}& A<x<B,\\
u_+(x), &\mbox{if}& x>B,
\end{array}\right.
\end{eqnarray}
let $u$ be the solution of (\ref{11}) with the initial data (\ref{16}). Then the pair $(u_-(x),u_+(x))$ gives rise to a single shock solution if for every $\bar{u}_0\in L^\f(A,B),$
there exist $(x_0,t_0)\in \R\times(0,\f)$, $\g>0,$ a Lipschitz curve $r:[T_0,\f)\rightarrow \R$ depending on $u_\pm(x),$ $||u_0||_{\f}$ such that for $t>T_0$,
\begin{eqnarray}
T_0&\leq& \g|A-B|,\label{17}\\
r(T_0)&=&x_0, \label{18}
\end{eqnarray}
\begin{eqnarray}
 u(x,r(t))\in\mbox{Range of }\ u_- \ &\mbox{for}&\ x<r(t),\label{19}\\
 u(x,r(t))\in \mbox{Range of }\ u_+\ &\mbox{for}&\ x>r(t). \label{110}
\end{eqnarray}
\end{definition}
Observe that if $f$ is uniformly convex, then by Liu's and Dafermos-Shearer's result, $(u_+,u_-)$ gives rise to single shock solution provided it satisfies 
(\ref{14}). 
\begin{figure}[ht]        \centering
        \def\svgwidth{0.8\textwidth}
        \begingroup
    \setlength{\unitlength}{\svgwidth}
  \begin{picture}(1.4,0.655363752)%
    \put(0,0.1){\includegraphics[width=0.95\textwidth]{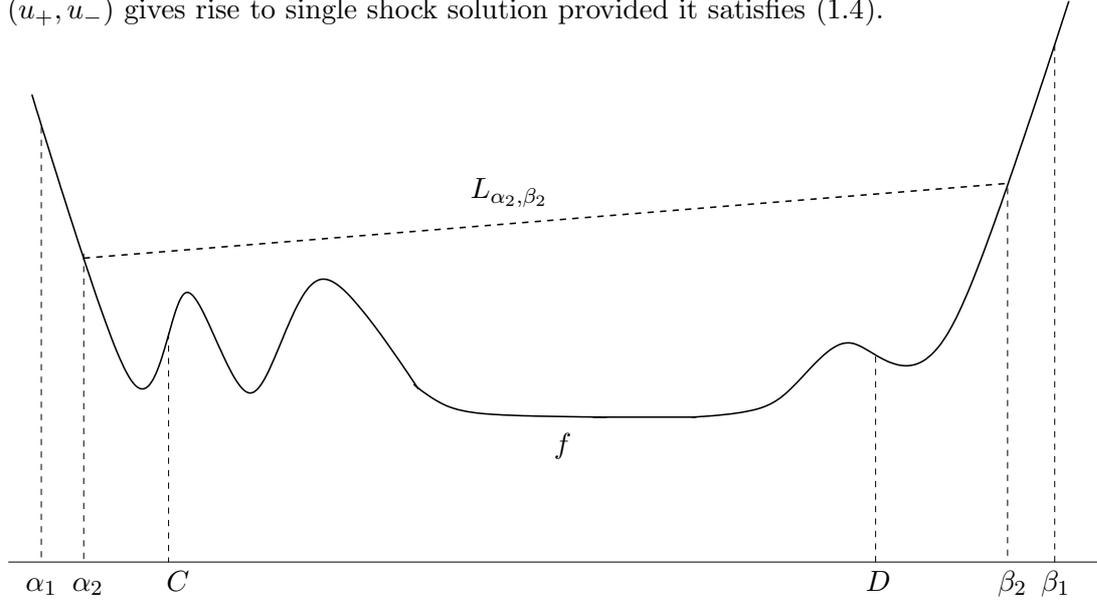}}%
    \put(0.02,0.07){\color[rgb]{0,0,0}\makebox(0,0)[lb]{\smash{$\al_1$}}}%
        \put(0.07,0.07){\color[rgb]{0,0,0}\makebox(0,0)[lb]{\smash{$\al_2$}}}%
            \put(0.172,0.07){\color[rgb]{0,0,0}\makebox(0,0)[lb]{\smash{$C$}}}%
                \put(0.9263,0.07){\color[rgb]{0,0,0}\makebox(0,0)[lb]{\smash{$D$}}}%
                    \put(1.068751,0.07){\color[rgb]{0,0,0}\makebox(0,0)[lb]{\smash{$\B_2$}}}%
    \put(1.1154,0.07){\color[rgb]{0,0,0}\makebox(0,0)[lb]{\smash{$\B_1$}}}%
        \put(0.5,0.497){\color[rgb]{0,0,0}\makebox(0,0)[lb]{\smash{$L_{\al_2,\B_2}$}}}%
            \put(0.59,0.22){\color[rgb]{0,0,0}\makebox(0,0)[lb]{\smash{$f$}}}%
              \end{picture}%
\endgroup
        \caption{Illustration of the condition to be one shock for the convex-convex case}
\label{Fig2}
\end{figure} 
\begin{definition} [Convex-convex type] (see figure \ref{Fig2}) Let $f\in C^1(\R)$ and $C\leq D$. Then $(f,C,D)$ is 
said to be a convex-convex type triplet if 
\begin{eqnarray}\label{112}
f|_{(-\f,C]} \mbox{ and } f|_{[D,\f)}
\end{eqnarray}
are convex functions.
\end{definition}
\begin{definition} [Convex-concave type] (see figure \ref{Fig3}) Let $f\in C^1(\R)$ and $C\leq D$. Then $(f,C,D)$ is said to be  a convex-concave type triplet if 
\begin{eqnarray}\label{113}
\begin{array}{lll}
f|_{(-\f,C]} \ \mbox{is a convex function}\\
 f|_{[D,\f)} \ \mbox{is a concave function}.
 \end{array}
\end{eqnarray}
\end{definition}
\begin{figure}[ht]        \centering
        \def\svgwidth{0.7\textwidth}
        \begingroup
    \setlength{\unitlength}{\svgwidth}
  \begin{picture}(1.4,0.95363752)%
    \put(0,0.1){\includegraphics[width=0.95\textwidth]{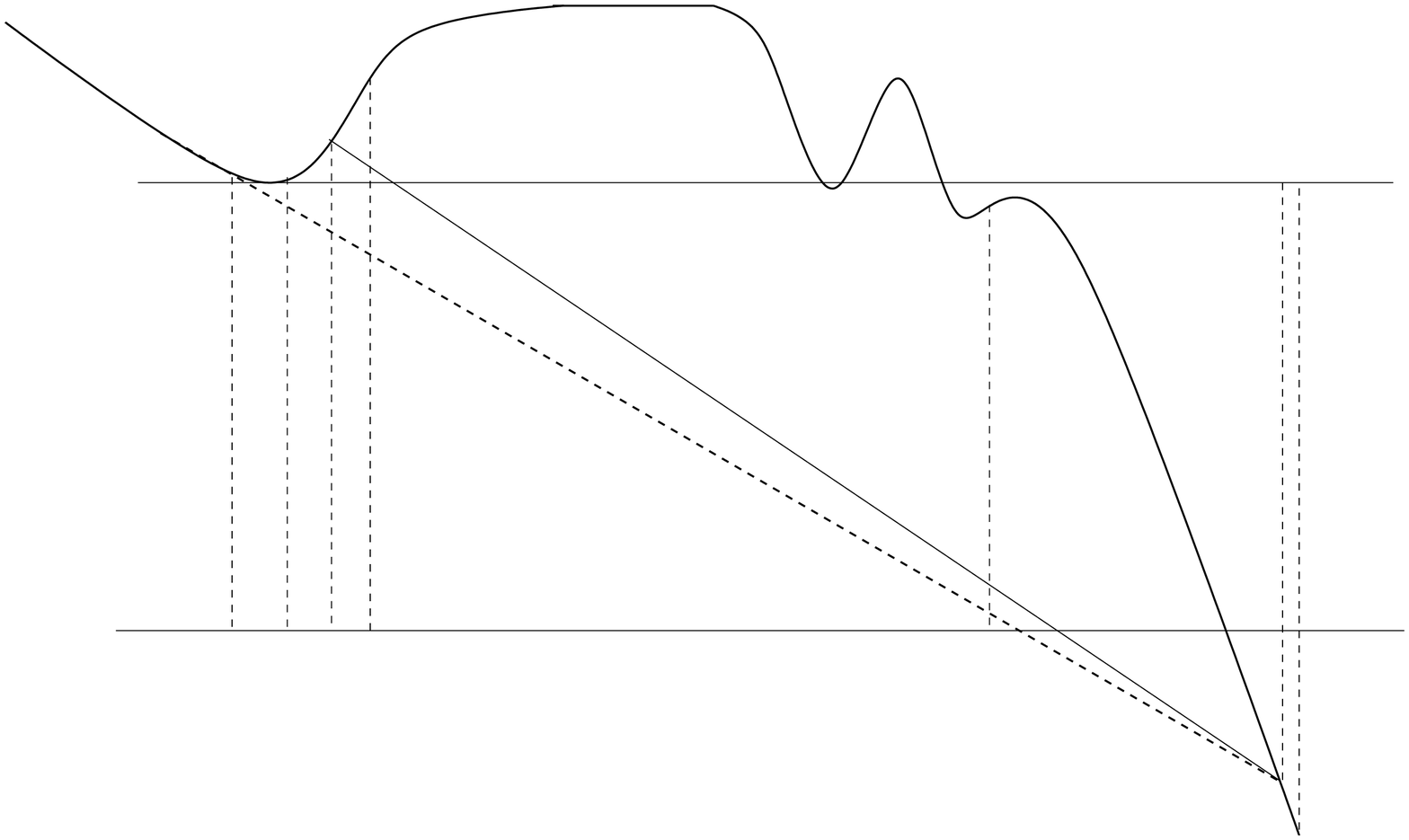}}%
           \put(0.19,0.27){\color[rgb]{0,0,0}\makebox(0,0)[lb]{\smash{$\al_0$}}}%
       \put(0.25,0.27){\color[rgb]{0,0,0}\makebox(0,0)[lb]{\smash{$\al_1$}}}%
        \put(0.3,0.27){\color[rgb]{0,0,0}\makebox(0,0)[lb]{\smash{$\al_2$}}}%
            \put(0.36,0.27){\color[rgb]{0,0,0}\makebox(0,0)[lb]{\smash{$C$}}}%
                \put(0.9263,0.27){\color[rgb]{0,0,0}\makebox(0,0)[lb]{\smash{$D$}}}%
                    \put(1.2568751,0.27){\color[rgb]{0,0,0}\makebox(0,0)[lb]{\smash{$\B_1$}}}%
    \put(1.199124,0.27){\color[rgb]{0,0,0}\makebox(0,0)[lb]{\smash{$\B_2$}}}%
        \put(0.5,0.517){\color[rgb]{0,0,0}\makebox(0,0)[lb]{\smash{$L_{\al_0}$}}}%
                \put(0.5,0.7517){\color[rgb]{0,0,0}\makebox(0,0)[lb]{\smash{$L_{\al_1}$}}}%
                                \put(1.15,0.7517){\color[rgb]{0,0,0}\makebox(0,0)[lb]{\smash{$L_{\al_1}(\B_2)$}}}%
                \put(0.639,0.586097){\color[rgb]{0,0,0}\makebox(0,0)[lb]{\smash{$L_{\al_2,\B_2}$}}}%
            \put(0.59,0.922){\color[rgb]{0,0,0}\makebox(0,0)[lb]{\smash{$f$}}}%
  \end{picture}%
\endgroup
        \caption{Illustration of the condition to be one shock for the convex-concave case}
\label{Fig3}
\end{figure}
\noindent{\it Notation:} For $a,b\in \R$, let $L_{a,b}$ be the line joining $(a,f(a))$ and $(b,f(b))$ given by 
\begin{eqnarray}\label{114}
L_{a,b}(\T)=f(a)+\frac{f(b)-f(a)}{b-a}(\T-a).
\end{eqnarray}  
Let $f^\p(a-)$ be the left derivative of $f$ at $a$ if it exists. Then define the tangent line $L_a$ at $(a,f(a))$ by 
\begin{eqnarray}\label{115}
L_a(\T)=f(a)+f^\p(a-)(\T-a).
\end{eqnarray}
\noindent{\it Hypothesis (\textbf{H}):} Let $f$ be a $C^1$ function and $\al_1\leq \al_2<C \leq D <\beta_2 \leq \beta_1$. Assume that they satisfy 
\begin{eqnarray}
[\al_1,\al_2]=\{x<C: f^\p(x)\in [f^\p(\al_1),f^\p(\al_2)]\}\label{116}\\
\ [\beta_1,\beta_2]=\{x>D: f^\p(x)\in [f^\p(\beta_1),f^\p(\beta_2)]\}\label{117}\\
\al_2+(\beta_1-\beta_2)<C\leq D<\beta_2-(\al_2-\al_1)\label{118}.
\end{eqnarray}
\begin{remark}From the maximum principle, without loss of generality, we can assume that 
\begin{eqnarray}\label{111}
\lim\limits_{|p|\rightarrow \f}\frac{|f(p)|}{|p|}=\f.
\end{eqnarray}
\end{remark}
\noindent Therefore, through out   this paper we assume that $f$ satisfies (\ref{111}).
 We have the following 
\label{maintheorem}\begin{theorem}[Main Theorem] Let $f,\al_1,\al_2,\beta_2,\beta_1, C$, $D$ satisfies (\textbf{H}) and  $u_\pm(\cdot)\in L^\f(\R).$
Assume that they satisfy any one of the following conditions:
\begin{itemize}
\item [I.] Let $(f,C,D)$ be of convex-convex type triplet and $u_\pm$ satisfies
\begin{eqnarray}
&&u_+(x)\in[\al_1,\al_2],\label{119}\\
&&u_-(x)\in  [\B_2,\B_1].\label{120}
\end{eqnarray}
For $\T\in [C,D]$
\begin{eqnarray}\label{121}
f(\T)<\min\{L_{\al_1,\B_1}(\T),L_{\al_2,\B_2}(\T), L_{\al_2+\B_1-\B_2, \B_2-(\al_2-\al_1)}(\T)\}
\end{eqnarray}
\item [II.] Let $(f,C,D)$ be of convex-concave type triplet and $u_\pm$ satisfies one of the following conditions:

\noindent Condition 1:
\begin{eqnarray}
u_+(x)\in [\B_2,\B_1],\label{123}\\
 u_-(x)\in [\al_1,\al_2].\label{124}
\end{eqnarray}
For $\T\in [C,D]$
\begin{eqnarray}
&&f(\T)>\max\{L_{\al_1,\B_1}(\T),L_{\al_2,\B_2}(\T), L_{\al_2+\B_1-\B_2, \B_2-(\al_2-\al_1)}(\T)\},\label{125}\\
&&L_{\al_1}(\B_2)>f(\B_2)\label{126}.
\end{eqnarray}
\noindent Condition 2:
\begin{eqnarray}
 u_+(x)\in [\al_1,\al_2].\label{n124}\\
 u_-(x)\in [\B_2,\B_1].\label{n123}
\end{eqnarray}
For $\T\in [C,D]$
\begin{eqnarray}
&&f(\T)<\min\{L_{\al_1,\B_1}(\T),L_{\al_2,\B_2}(\T), L_{\al_2+\B_1-\B_2, \B_2-(\al_2-\al_1)}(\T)\},\label{n125}\\
&&L_{\B_1}(\al_2)<f(\al_2)\label{n126}.
\end{eqnarray}
\end{itemize}
Then $(u_-(\cdot), u_+(\cdot))$ gives rise to a single shock solution (see figure \ref{Fig1}, \ref{Fig2}).
\end{theorem}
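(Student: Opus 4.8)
\noindent The plan is to combine the three ingredients highlighted in the introduction --- the structure theorem for a degenerate convex flux (see Definition~\ref{definition41}), the fine analysis of the forward characteristics ($R$ curves), and the $L^1$-contraction/comparison principle --- and to show that in each of the three cases the quantitative hypotheses are exactly the Oleinik admissibility conditions for the relevant limiting Riemann interactions.

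First I would isolate the ``outer'' part of the solution. Under (\textbf{H}) the restriction of $f$ to $(-\f,C]$ is convex, and its restriction to $[D,\f)$ is convex in Case~I and concave in Case~II; moreover by (\ref{116})--(\ref{117}) the intervals $[\al_1,\al_2]$ and $[\B_1,\B_2]$ are precisely the indicated level sets of $f^\p$. Hence every characteristic issued from $\{x<A\}$ carries a value $u_-(x)$ with speed in the fixed compact set $\{f^\p(s):s\in[\B_2,\B_1]\}$, every characteristic from $\{x>B\}$ carries $u_+(x)$ with speed in $\{f^\p(s):s\in[\al_1,\al_2]\}$, and within each family these characteristics do not cross one another (one-sided convexity). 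Applying the structure theorem on the half-lines $x<A$ and $x>B$, on the portion of the $(x,t)$-plane reached by these characteristics the solution takes values only in the range of $u_-$, resp.\ of $u_+$; this gives (\ref{19})--(\ref{110}) once the interface between the two is known to be a single Lipschitz curve.

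The heart of the argument is the wedge issued from $[A,B]\times\{0\}$. By finite speed of propagation the influence of the arbitrary middle datum $\bar u_0$ is confined to a cone, and by the maximum principle the solution there stays between the essential infimum and supremum of $u_0$. I would run a front-tracking approximation and follow the leftmost front of the $u_+$-wave and the rightmost front of the $u_-$-wave as they advance toward each other; condition (\ref{118}), rewritten as $\al_2+(\B_1-\B_2)<C$ and $D+(\al_2-\al_1)<\B_2$, ensures that among all chords relevant to this interaction the binding one is $L_{\al_2+\B_1-\B_2, \B_2-(\al_2-\al_1)}$, so that (\ref{121}) (resp.\ (\ref{125}), resp.\ (\ref{n125})) says exactly that the graph of $f$ lies strictly below (resp.\ above, resp.\ below) \emph{every} such chord on $[C,D]$, hence --- the outer branches being convex, resp.\ concave --- on the whole interval between the two states. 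This is Oleinik's entropy condition: no intermediate shock or rarefaction can nucleate inside $(C,D)$, the Riemann interactions across the wedge collapse to a single admissible discontinuity, and the residual waves produced by $\bar u_0$ are all absorbed by it. The tangency conditions (\ref{126}), (\ref{n126}) enter only in the convex--concave case, to force the connection between the convex and the concave branch to be a pure shock (no rarefaction prefix). Passing to the limit in the scheme via $L^1$-contraction then yields, for $t\ge T_0$, the $u_-$-wave, one Lipschitz shock $r(t)$, and the $u_+$-wave.

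Finally I would read off the quantitative assertions. The two advancing fronts have relative speed bounded below in terms of $f^\p$ on $[\al_1,\al_2]$ and $[\B_1,\B_2]$ --- a strictly positive gap, by (\ref{118}) and convexity --- so a gap of width at most $|A-B|$ closes within time $\g|A-B|$ with $\g$ depending only on $f$ and $\al_i,\B_j$; this gives (\ref{17}), setting $x_0:=r(T_0)$ gives (\ref{18}), and $r$ is Lipschitz by Rankine--Hugoniot since its one-sided traces stay in compact sets. Persistence for all $t>T_0$ is immediate from the no-crossing property: to the left of $r(t)$ only characteristics from $\{x<A\}$ arrive and to the right only those from $\{x>B\}$, which is (\ref{19})--(\ref{110}). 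The main obstacle is precisely the wedge analysis of the previous paragraph: proving that the genuinely non-convex interaction really reduces to one shock in \emph{finite} time, that the chord conditions forbid any secondary shock from surviving in $(C,D)$, and that all this is uniform in $\bar u_0\in L^\f(A,B)$, so that $T_0$ depends only on $|A-B|$ and $\|u_0\|_\f$.
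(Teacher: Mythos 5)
Your proposal assembles the right tools (structure theorem for degenerate convex flux, front tracking, comparison, $L^1$-contraction) and correctly identifies the wedge over $[A,B]$ as the heart of the matter, but it stops exactly where the paper's actual mechanism begins. The single-collision picture you describe --- two outer fronts advancing toward each other and "absorbing" all residual waves from $\bar u_0$ in one pass --- does not close, because $\bar u_0\in L^\f(A,B)$ is arbitrary: after the first interaction time one only knows (by sandwiching $u$ between the solutions with middle data cut at the levels $\al_1+\e/2$ and $\B_1-\e/2$) that on some interval $[A_1,B_1]$ the solution takes values in all of $[\al_1,\B_1]$, not in $\mbox{Range}(u_-)\cup\mbox{Range}(u_+)$. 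Nothing in a one-step Oleinik/chord argument forces these intermediate states to disappear. The paper's proof is an \emph{iteration with geometric contraction}: at each stage the comparison principle plus Lemma \ref{lemma46} (resp.\ Lemmas \ref{lemma47}--\ref{lemma31} in the convex--concave case) produce a new configuration of the same type on a strictly smaller interval $[A_n,B_n]$, and the $L^1$-contraction gives
$$|A_n-B_n|\leq \frac{\B_1-\al_1-\e}{\B_2-\al_2}\,|A_{n-1}-B_{n-1}|=\delta\,|A_{n-1}-B_{n-1}|,$$
where $\delta<1$ precisely because (\ref{118}) allows one to choose $\e>(\B_1-\B_2)+(\al_2-\al_1)$. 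The times then satisfy $T_n\leq T_{n-1}+\g|A_{n-1}-B_{n-1}|$, so $T_n$ converges by the geometric series and $[A_n,B_n]$ collapses to a single point $(x_0,T_0)$ with $T_0\leq \g(1-\delta)^{-1}|A-B|$. This iteration is also the real reason the third chord $L_{\al_2+\B_1-\B_2,\,\B_2-(\al_2-\al_1)}$ appears in (\ref{121}) and (\ref{125}): it is the condition needed so that the cut data at levels $\al_1+\e/2$ and $\B_1-\e/2$ (with $\e$ as above) again fall under the shock case of the structure theorem at every stage, not merely a statement about which chord is "binding" in a single interaction. Your proposal never produces a contraction factor, so it cannot deliver a finite $T_0$ uniform in $\bar u_0$.

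A second, smaller gap: you treat the convex--convex and convex--concave cases uniformly, but the paper must handle them by genuinely different reductions because of the opposite polarity. In the convex--convex case one makes a convex modification $\ti f$ of $f$ on $[\al,\B]$ (equation (\ref{428})) and invokes the structure Theorem \ref{theorem28} for a \emph{degenerate} convex flux --- which is why that theorem had to be proved with bounds in Section 2 --- whereas in the convex--concave case the reduction to data with range in $[\al_1,\B_1]$ passes through the tangent-line construction at $\al_0$ (equation (\ref{458})) and a direct front-tracking count of wave interactions; this is where the extra hypotheses (\ref{126}) and (\ref{n126}) are actually used, not merely to "force a pure shock connection" in a single Riemann problem.
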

\begin{remark}
In the main Theorem, for the case when the $(f,C,D)$ is a convex-convex triplet, one cannot interchange the role of $u_+(x)$ and $u_-(x)$,  see second part of  the Theorem \ref{theorem28}. Whereas  for the convex-concave situation, we can interchange the role of $u_+(x)$ and $u_-(x)$,
 due to the  two different polarity. \end{remark}
\begin{remark}
In the  proof of the main Theorem we have not used the $C^1$ regularity  of the flux in $(C,D)$ as   Lipschitz regularity  is good enough in  $(C,D)$.
\end{remark}

As an immediate corollary to the main Theorem, we have the following generalization of Liu \cite{7} and Dafermos-Shearer \cite{3}.
\begin{corollary}
Let $\al_1=\al_2<C\leq D <\B_2=\B_1$. Assume  one of the following conditions hold 
\begin{itemize}
\item [I$^\p$.] Let $(f,C,D)$ be of convex-convex type such that $f$ is strictly convex on $(-\f,C)\cup (D,\f)$. Assume that  $(u_-,u_+)=(\B_1, \al_1)$ such that 
for $\T\in[C,D]$
\begin{eqnarray}\label{127}
f(\T)<L_{u_-,u_+}(\T).
\end{eqnarray}
\item [II$^{\p}$.]  Let $(f,C,D)$ be of convex-concave type triplet and $u_\pm$ satisfies one of the following conditions:

\noindent Condition 1:
\begin{eqnarray}
\begin{array}{llllll}
u_+(x)=\B_1,\  u_-(x)=\al_1,\\
f(\T)> L_{u_-,u_+}(\T), \ for \ \T\in [C,D], \label{128}\\
L_{u_-}(u_+)>f(u_+).
\end{array}
\end{eqnarray}
\noindent Condition 2:
\begin{eqnarray}
\begin{array}{llllll}
u_+(x)=\al_1,\  u_-(x)=\B_1,\\
f(\T)< L_{u_-,u_+}(\T),\ for \ \T\in [C,D],\\
L_{u_-}(u_+)<f(u_+).
\end{array}
\end{eqnarray}
\end{itemize}
Then $(u_-,u_+)$ gives rise to a single shock solution.
\begin{proof} From the strict convexity and concavity, it follows that (\ref{116}) and (\ref{117}) hold and hence (\textbf{H}). (\ref{121}), (\ref{125}), (\ref{126}) follows
from (\ref{127}) and (\ref{128}). Hence from the main Theorem, $(u_-,u_+)$ gives rise to a single shock solution. This proves the corollary. 
\end{proof}
\end{corollary}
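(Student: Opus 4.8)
The task is to prove the Corollary, whose entire content is the verification that the hypotheses of the Main Theorem reduce, in the degenerate case $\al_1=\al_2$ and $\B_2=\B_1$, to the simpler single conditions stated. The plan is therefore not to re-derive anything dynamical about shocks, but to carry out a careful bookkeeping argument: take the premises of each case (I$^\p$, II$^\p$) and show they imply (\textbf{H}) together with the relevant inequalities among (\ref{119})--(\ref{126}) (or (\ref{n123})--(\ref{n126})), after which the Main Theorem applies verbatim.

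First I would treat the structural hypothesis (\textbf{H}). With $\al_1=\al_2=:\al$ and $\B_1=\B_2=:\B$, the interval conditions (\ref{116}) and (\ref{117}) degenerate to the single-point statements $\{x<C:f^\p(x)=f^\p(\al)\}=\{\al\}$ and $\{x>D:f^\p(x)=f^\p(\B)\}=\{\B\}$. These are exactly the assertions that $f^\p$ is injective on $(-\f,C)$ and on $(D,\f)$, which is precisely what strict convexity (case I$^\p$) or strict convexity on the left together with strict concavity on the right (case II$^\p$) guarantees, since $f^\p$ is then strictly monotone on each piece. This is the step flagged in the corollary's own proof as ``from the strict convexity and concavity it follows that (\ref{116}) and (\ref{117}) hold.'' I would also check the separation inequality (\ref{118}): in the degenerate case $\B_1-\B_2=0$ and $\al_2-\al_1=0$, so (\ref{118}) collapses to $\al<C\le D<\B$, which is part of the standing ordering $\al_1\le\al_2<C\le D<\B_2\le\B_1$. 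Thus (\textbf{H}) holds trivially.

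Next I would collapse the three competing lines. When $\al_1=\al_2=\al$ and $\B_1=\B_2=\B$, the third line's endpoints become $\al_2+\B_1-\B_2=\al$ and $\B_2-(\al_2-\al_1)=\B$, so all three of $L_{\al_1,\B_1}$, $L_{\al_2,\B_2}$, $L_{\al_2+\B_1-\B_2,\,\B_2-(\al_2-\al_1)}$ coincide with the single chord $L_{\al,\B}=L_{u_-,u_+}$. Hence the $\min$ in (\ref{121}) and the $\max$ in (\ref{125}) each reduce to that one chord, and (\ref{121}) becomes exactly (\ref{127}), while (\ref{125}) becomes the chord inequality in (\ref{128}); the tangent condition (\ref{126}), namely $L_{\al_1}(\B_2)>f(\B_2)$, becomes $L_{u_-}(u_+)>f(u_+)$ after substituting $\al_1=\al=u_-$ and $\B_2=\B=u_+$ in case II$^\p$ Condition 1 (and symmetrically for Condition 2 via (\ref{n126})). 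Finally the range constraints (\ref{119})--(\ref{120}) (resp. (\ref{123})--(\ref{124}), (\ref{n123})--(\ref{n124})) degenerate from membership in an interval to membership in a singleton, matching the prescribed point values $u_\pm\in\{\al,\B\}$. Assembling these, each case of the corollary supplies precisely the hypotheses of the corresponding case of the Main Theorem, so its conclusion yields the single shock solution.

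I do not anticipate a genuine obstacle here, since the argument is a direct specialization; the only point requiring care is the equivalence between the injectivity statements (\ref{116})--(\ref{117}) and strict convexity/concavity. A subtlety is that strict convexity of a $C^1$ function gives strict monotonicity of $f^\p$ on the open piece, which forces the level set of $f^\p$ at $\al$ (within $x<C$) to be the single point $\al$; one must note that the endpoint behaviour at $C$ and $D$ is consistent with the closed/open interval conventions used in (\ref{116})--(\ref{117}), but this is immediate. I would therefore keep the writeup short, essentially formalizing the four substitutions above and invoking the Main Theorem, exactly in the spirit of the proof already sketched in the excerpt.
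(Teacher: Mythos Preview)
Your proposal is correct and follows essentially the same approach as the paper's own proof: verify that strict convexity/concavity yields (\ref{116})--(\ref{117}), observe that (\ref{118}) trivializes when $\al_1=\al_2$ and $\B_1=\B_2$, collapse the three chords in (\ref{121}) and (\ref{125}) to the single line $L_{u_-,u_+}$, and then invoke the Main Theorem. The paper's proof is a terse four-line sketch of exactly these steps; your version simply fills in the substitutions explicitly.
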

\subsection{Outline of the paper}
The paper is organized as follows: 

\noindent Section 2  deals with  Lax-Oleinik formula for degenerate convex scalar conservation laws  via Hamilton-Jacobi equations. There we introduce the notion of characteristics curves (forward characteristic curve), shock packets from \cite{1} and using them we  prove the structure Theorem for convex flux with bound. Getting this bounds is very crucial to obtain the final result for non convex flux.

\noindent Section 3  concerns the proof of  the main Theorems with the following main steps:
\begin{itemize}
\item [Step 1.] Since the Riemann problem solution involves the contact discontinuities, the convex hull or the concave hull of $f$ admits degenerate 
parts. That is the hulls need not be strictly convex or concave. Therefore, first we prove the Lax-Oleinik explicit formula and structure Theorem for $C^1$ convex 
flux with bounds using a blow up analysis. This version of Structure Theorem is new to the literature. Using this, we first prove the main Theorem for the convex flux.
\item [Step 2.] Using the front tracking and $L^1$-contraction, we complete the proof of the  main Theorem,  which contents two parts with the  different polarity and therefore the proofs are of different nature. The main ingredient for the first part of  the main Theorem is  structure Theorem with bound. First we prove the convex-convex situation and then we prove the case convex-concave situation by using the front tracking.
 
\item [Step 3.] We need to use some elementary properties of  convex and concave functions and for the sake of completeness we are presenting their proof 
in the Appendix. 
\end{itemize}
Finally we give counter examples to show that the conditions (\ref{121}), (\ref{125}) and (\ref{126}) are optimal.

\noindent In order  to make the paper self contained,  we give the proof of some important Lemmas to get the stability results  using techniques  from convex analysis in Section 4 (appendix). 

\section{Structure Theorem for $C^1$-convex flux}

\subsection{Preliminaries}
\setcounter{equation}{0}
Let $f$ be a locally Lipschitz function on $\R$. Let $K\subset \R$ be a compact set and define 
\begin{eqnarray}\label{21}
Lip (f,K)=\sup_{\substack{x,y\in K\\ x\neq y}}\frac{|f(x)-f(y)|}{|x-y|}.
\end{eqnarray}
Recall the facts from Kruzkov Theorem \cite{5, 6}. Let $u_0,v_0\in L^\f(\R)$ and $u,v$ be the respective solutions of (\ref{11}) with initial data $u_0$
and $v_0$. Let $M=\mbox{Max}\{||u_0||_\f,||v_0||_\f\}$ and $K=[-M,M]$. Then 
\begin{itemize}
\item [(i).] Comparison principle: Assume that $u_0(x)\leq v_0(x)$ a.e. $x\in\R$, then for a.e. $(x,t)\in \R\times (0,\f)$, $u(x,t)\leq v(x,t).$
\item [(ii).] $L^1_{\mbox{loc}}$ contraction: Let $a\leq b,$ then for $t>0$
\begin{eqnarray*}
\int_a^b|u(x,t)-v(x,t)|dx \leq \int_{a-Mt}^{b+Mt}|u_0(x)-v_0(x)|dx.
\end{eqnarray*}
\item [(iii).] Let $u_0\in BV(\R),$ then for $0\leq s< t,$
\begin{eqnarray*}
\int_\R |u(x,t)-u(x,s)|dx \leq Lip(f,K) |s-t| TV(u_0),
\end{eqnarray*}
where $TV(u_0)$ denotes the total variation semi norm of $u_0$.
\end{itemize}
As a consequence of this we are stating the following well known approximation Lemma \cite{3, 5, 9}. For the sake of completeness, we are presenting the proof in the appendix. 
\begin{lemma}\label{lemma21} Let ${f_k}$ be a sequence of Lipschitz functions such that for any compact set $K\subset \R$
\begin{eqnarray}\label{22}
\sup\limits_{n}Lip(f_n,K)<\f.
\end{eqnarray}  
Assume that $f_k\rightarrow f$ in $C^0_{\mbox{loc}}(\R).$ Let $u_k,u$ be the solutions of (\ref{11}) with the  corresponding fluxes $f_k$, $f$ and the initial data $u_0$. 
Then $u_k\rightarrow u$ in $L^1_{\mbox{loc}}(\R\times(0,\f)).$
\end{lemma}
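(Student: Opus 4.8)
The plan is the standard density-plus-compactness argument resting on the three Kruzkov facts (i)--(iii) recorded above. Fix $M=\|u_0\|_\f$; by the maximum principle $\|u_k\|_\f\le M$ for all $k$ and $\|u\|_\f\le M$, so every solution in sight takes values in $K:=[-M,M]$, and by (\ref{22}) the number $L:=\sup_n Lip(f_n,K)$ is finite (and $Lip(f,K)\le L$ as well, since $f_k\to f$ in $C^0_{\mbox{loc}}$). First I would reduce to $u_0\in BV(\R)\cap L^\f(\R)$. Given a general $u_0\in L^\f(\R)$, choose $u_0^\e\in BV\cap L^\f$ with $\|u_0^\e\|_\f\le M$ and $u_0^\e\to u_0$ in $L^1_{\mbox{loc}}(\R)$ (mollify, then truncate), and let $u_k^\e,u^\e$ solve (\ref{11}) with fluxes $f_k,f$ and data $u_0^\e$. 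The $L^1_{\mbox{loc}}$ contraction (ii) gives, for every $a\le b$ and $T>0$, $\sup_{0<t<T}\int_a^b|u_k(x,t)-u_k^\e(x,t)|\,dx\le \int_{a-MT}^{b+MT}|u_0-u_0^\e|\,dx$, uniformly in $k$, and similarly for $u$ versus $u^\e$; both right-hand sides tend to $0$ as $\e\to 0$. Hence, once the $BV$ case is established, the triangle inequality $\|u_k-u\|\le\|u_k-u_k^\e\|+\|u_k^\e-u^\e\|+\|u^\e-u\|$ in $L^1((a,b)\times(0,T))$ closes the general case.

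Now suppose $u_0\in BV\cap L^\f$. Applying (ii) to the pair $(u_0(\cdot+h),u_0)$ together with the translation invariance of (\ref{11}) (if $u$ solves with data $u_0$ then $u(\cdot+h,\cdot)$ solves with data $u_0(\cdot+h)$) yields the uniform local spatial $BV$ bound $TV\big(u_k(\cdot,t);(a,b)\big)\le TV(u_0)$ for every $t>0$, every $k$, and every interval $(a,b)$; meanwhile (iii) gives $\int_\R|u_k(x,t)-u_k(x,s)|\,dx\le L\,|t-s|\,TV(u_0)$, a uniform $L^1$-modulus of continuity in time. Combined with the uniform $L^\f$ bound, a Helly / Kolmogorov--Riesz compactness argument, with a diagonal extraction over an exhaustion of $\R\times(0,\f)$, produces a subsequence $u_{k_j}\to v$ in $L^1_{\mbox{loc}}(\R\times(0,\f))$ and a.e. It remains to identify $v$ with $u$: each $u_{k_j}$ satisfies the Kruzkov entropy inequalities $\pa_t|u_{k_j}-c|+\pa_x\big[\mathrm{sgn}(u_{k_j}-c)(f_{k_j}(u_{k_j})-f_{k_j}(c))\big]\le 0$ in $\mathcal{D}'(\R\times(0,\f))$ for all $c\in\R$; since $u_{k_j}\to v$ a.e. with values in $K$ and $f_{k_j}\to f$ uniformly on $K$, dominated convergence lets me pass to the limit in these inequalities and in the initial trace, so $v$ is a Kruzkov solution of (\ref{11}) with flux $f$ and data $u_0$. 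By uniqueness $v=u$, and since this limit does not depend on the subsequence, the whole sequence $u_k\to u$ in $L^1_{\mbox{loc}}$.

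I expect the only delicate point to be this compactness/identification step: one must verify that the spatial $BV$ bound and the temporal $L^1$-modulus are genuinely uniform in $k$ (precisely where the uniform $L^\f$ bound and hypothesis (\ref{22}) enter), and that the nonlinear flux terms $f_{k_j}(u_{k_j})$ converge in the distributional inequalities --- which is exactly why $C^0_{\mbox{loc}}$ convergence of $f_k$ together with the common range $K$ is enough. The reduction to $BV$ data and the triangle-inequality bookkeeping are routine consequences of (ii).
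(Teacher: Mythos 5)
Your proposal is correct and follows essentially the same route as the paper: reduction to $BV$ data via the $L^1_{\mbox{loc}}$ contraction (ii), compactness for $BV$ data from the uniform total-variation bound and the time-continuity estimate (iii) via Helly's theorem, and identification of the limit as the entropy solution by dominated convergence in the Kruzkov formulation, using the uniform $L^\f$ bound together with (\ref{22}) and the $C^0_{\mbox{loc}}$ convergence of the fluxes. The only cosmetic difference is that the paper phrases the general-$L^\f$ step as a Cauchy-sequence estimate in $L^1_{\mbox{loc}}$ (a three-term triangle inequality in $k,m,n$) rather than your triangle inequality against the already-settled $BV$ case, and you are more explicit than the paper about the subsequence-independence step that upgrades subsequential to full convergence.
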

Next we state the following front tracking Lemma of Dafermos \cite{Da} without proof [See chapter 6 in \cite{9},  Lemma (2.6) in \cite{10}].
\begin{lemma}\label{lemma22}
Let $f$ be a piecewise affine continuous function. Let $u_0$ be a piecewise constant function with finite number of jumps. Then for all $t>0, x\rightarrow u(x,t)$ is a 
piecewise constant function with uniformly bounded number of jumps. 
\end{lemma}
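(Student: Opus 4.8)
The plan is to realise the Kruzkov solution $u$ by Dafermos's polygonal (front-tracking) construction, to show that this construction produces a piecewise-constant function whose number of jumps is bounded uniformly in $t$, and then to identify it with $u$ by uniqueness. Write $b_1<\dots<b_m$ for the breakpoints of $f$ (the finitely many points where its slope changes), $\mathcal S_0$ for the finite set of values taken by $u_0$, $\mathcal S:=\mathcal S_0\cup\{b_1,\dots,b_m\}$, and $\delta:=\min\{\,|a-a'|:a,a'\in\mathcal S,\ a\neq a'\,\}>0$.

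First I would record the form of the Riemann solver for a piecewise-affine flux. For states $a,c\in\mathcal S$ the entropy solution of the Riemann problem with left state $a$ and right state $c$ is self-similar and piecewise constant, its successive values being the vertices of the lower convex envelope of $f$ on $[a,c]$ when $a<c$, and of the upper concave envelope on $[c,a]$ when $a>c$. Two features matter: (i) every non-endpoint vertex of such an envelope is a breakpoint of $f$, so all these values lie in $\{a,c\}\cup\{b_1,\dots,b_m\}\subseteq\mathcal S$; and (ii) the jump between consecutive vertices $v,v'$ is entropy admissible — on $[v,v']$ the envelope coincides with the chord $L_{v,v'}$ and $f$ lies on the correct side of it — and it moves with speed $(f(v')-f(v))/(v'-v)$, which belongs to the finite set $\{(f(s)-f(s'))/(s-s'):s,s'\in\mathcal S,\ s\neq s'\}$. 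Hence every front produced anywhere in the construction joins two states of $\mathcal S$ and carries one of finitely many speeds.

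Next I would run the scheme. At $t=0^{+}$ replace each of the finitely many jumps of $u_0$ by its Riemann fan; the result is piecewise constant, has finitely many straight-line fronts with all values in $\mathcal S$, and has the same total variation as $u_0$ (a monotone fan does not change the local total variation). Fronts travel at constant speed until two consecutive ones meet; after a standard arbitrarily small perturbation of $f$ one may assume no three fronts ever meet at once (harmless, by Lemma \ref{lemma21} and the stability of the bound $N_0$ below), so at each collision the local picture is a Riemann problem between the two outermost colliding states, resolved as above. Between collisions $TV(u(\cdot,t))$ is constant, and across a collision with colliding states $a,b,c$ the local variation becomes $|a-c|\le|a-b|+|b-c|$, so $TV(u(\cdot,t))\le TV(u_0)$ for all $t$. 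Since every value of $u(\cdot,t)$ lies in $\mathcal S$, each of its jumps has height at least $\delta$; therefore $u(\cdot,t)$ has at most $N_0:=\lceil TV(u_0)/\delta\rceil$ jumps, a bound depending only on $f$ and $u_0$.

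The remaining point — and the one I expect to be the real obstacle — is that the scheme never stops: only finitely many collisions occur. The idea is to separate collisions that strictly decrease $TV$ from those that preserve it. As $TV(u(\cdot,t))$ is a sum of at most $N_0$ jump heights, each drawn from the finite set $\{|s-s'|:s,s'\in\mathcal S\}$, it takes only finitely many values, so strict decreases happen only finitely often. If, on the other hand, the collision of two consecutive fronts $a\to b$, $b\to c$ preserves $TV$, then $b$ lies strictly between $a$ and $c$; checking the two orientations, the ``left front faster'' condition required for them to meet forces the broken line $L_{a,b}\cup L_{b,c}$ (and hence $f$ on $[a,c]$, which lies beyond it by admissibility) to fall on the side of $L_{a,c}$ that makes the relevant envelope of $f$ on $[a,c]$ equal to $L_{a,c}$ itself; thus the Riemann problem between $a$ and $c$ resolves into the single front $a\to c$, and the collision strictly lowers the total number of fronts. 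Since the front count is $\le N_0$ and can only decrease between two successive $TV$-dropping collisions, there are finitely many collisions in all. It follows that $u$ is piecewise constant on $\R\times(0,\infty)$ with at most $N_0$ jumps on every time slice; as a weak solution all of whose discontinuities satisfy the Oleinik condition, it coincides with the Kruzkov solution by uniqueness, which proves the lemma.
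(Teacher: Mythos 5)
Your argument is correct, but note that the paper does not prove this lemma at all: it states it as Dafermos's front-tracking lemma and refers the reader to Chapter 6 of Bressan's book and Lemma 2.6 of Holden--Risebro. What you have written is essentially the standard proof from those references, and all the delicate points are handled properly: the confinement of all intermediate states to the finite set $\mathcal S$ (because interior vertices of the convex or concave envelope of a piecewise affine $f$ must be breakpoints of $f$), the non-increase of total variation across interactions, the lower bound $\delta$ on jump heights giving the uniform bound $N_0$ on the number of fronts, and --- the only step that genuinely requires an argument --- the finiteness of the number of interactions, which you obtain correctly by observing that a variation-preserving collision of two admissible fronts $a\to b\to c$ with $b$ between $a$ and $c$ forces $f$ to lie entirely on the admissible side of the chord $L_{a,c}$, so the outgoing wave is a single front and the front count drops. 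The only cosmetic remark is that the perturbation to avoid triple collisions is unnecessary: a simultaneous collision is itself just a Riemann problem between the outermost states, and your estimates apply verbatim.
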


\subsection{Structure Theorem for $C^1$-convex flux with bounds}

The structure Theorem is the main ingredient in the proof of the main Theorem. Since the flux $f$ need not be strictly convex,  one must prove the structure 
Theorem with bounds. The main ingredients are Hopf and Lax-Oleinik formula for the solution of Hamilton-Jacobi equation and the corresponding conservation laws. 
For the strictly convex $C^1$ functions, structure Theorem has been proved in \cite{1} when $u_\pm$ are constants  and without bounds. Here we prove the structure Theorem with bounds for $C^1$ convex flux which exhibits finite number of degeneracies.

\noindent{Hmilton-Jacobi equation}: In order to prove the structure Theorem, we need to prove the Lax-Oleinik type explicit formula for the $C^1$ convex flux. As in 
Lax-Oleinik, we establish this via  the Hamilton-Jacobi equations.
\par Let $f$ be a convex function and $u_0\in L^\f(\R)$ with $M=||u_0||_\f.$ For $0\leq s \leq t, x\in \R, p\in \R$, define 
\begin{eqnarray}
f^*(p)&=&\sup\limits_{q}\{pq-f(q)\},\label{23}\\
v_0(x)&=&\int_0^x u_0(\T)d\T,\label{24}\\
v(x,t,f)&=&\inf\limits_{y\in\R}\left\{v_0(y)+tf^*\left(\frac{x-y}{t}\right)\right\},\label{25}\\
w(x,s,t,f)&=&\inf\limits_{y\in\R}\left\{v(x,s,f)+(t-s)f^*\left(\frac{x-y}{t-s}\right)\right\},\label{26}\\
ch(x,t,f)&=&\{\mbox{minimizers in (\ref{25})}\},\label{27}\\
ch(x,s,t,f)&=&\{\mbox{minimizers in (\ref{26})}\},\label{28}\\
y_+(x,t,f)&=&\max\{y: y\in ch(x,t,f)\},\label{29}\\
y_-(x,t,f)&=&\min\{y: y\in ch(x,t,f)\},\label{210}\\
y_+(x,s,t,f)&=&\max\{y: y\in ch(x,s,t,f)\},\label{211}\\
y_-(x,s,t,f)&=&\min\{y: y\in ch(x,s,t,f)\}.\label{212}
\end{eqnarray}
Points in $ch(x,t,f), ch(x,s,t,f)$ are called the characteristic points and the corresponding  sets are called the characteristic sets. $y_\pm$ are called the extreme characteristic points.
\par Then we have the following stability Lemma. Most of the results here are known and for the sake 
of completeness, we are sketching the proofs in the appendix.
\begin{lemma}[Stability Lemma]
Let $f$ be a convex function. Then
\begin{itemize}
\item [1.] 
\begin{eqnarray}\label{213}
\lim\limits_{|p|\rr \f} \frac{f^*(p)}{|p|}=\f.
\end{eqnarray}
\item [2.] Let $\{f_n\}$ be a sequence of convex functions such that $f_n\rightarrow f$ in $C_{\mbox{loc}}^0(\R)$ and \\ $\lim\limits_{|p|\rr \f}\inf\limits_n \frac{f_n(p)}{|p|}=\f$. Then 
$f_n^{*}\rr f^*$ in $C_{\mbox{loc}}^0(\R).$ 
\item [3.] For $0\leq s <t, x\in \R,$ $v$ is a Lipschitz function with Lipschitz constant  bounded 
by $M=||u_0||_\f.$ Let $p_0>1$ such that for $|p|>p_0$, 
\be\label{214}
f^*(p)-M|p|>f^*(0),
\ee
then $ch(x,s,t,f)\neq \phi$ and 
\be
v(x,t,f)&=&\inf\limits_{\left|\frac{x-y}{t-s}\right|\leq p_0}\left\{v(x,s,t)+(t-s)f^*\left(\frac{x-y}{t-s}\right)\right\},\label{215}\\
ch(x,t,f)&=&ch(x,0,t,f), \label{216}\\
y_\pm(x,t,f)&=& y_\pm(x,0,t,f)\label{217}.
\ee
\item [4.] For $0\le s<t, y\in \R$, $\g$ denotes  the line joining $(x,t)$ and $(y,s)$ and is given by 
\be\h{218}
\g(\T,x,s,t,y)=x+\left(\frac{x-y}{t-s}\right)(\T-t).
\ee
\noindent  If $y\in ch(x,s,t,f)$, then $\g$ is called a characteristic line segment.
Let $x_1\neq x_2$, for $i=1,2$, $\xi_i\in ch(x_i, s,t,f), y_i\in ch(\xi_i,s,f),$ then 
\be
&&y_i\in ch(x_i,t,f)\h{219}\\
&&x\mapsto y_\pm(x,s,t,f) \ \mbox{are non decreasing functions.}\h{220}
\ee
Furthermore if $f^*$ is a strictly convex function, then no two different characteristic line segments intersect in the interior. That is  for $\T\in (s,t)$
\be
\g(\T,x_1,s,t,y_1)&\neq& \g(\T,x_2,s, t,y_2),\h{221}\\
\lim\limits_{\xi\uparrow x_0} y_+(\xi,s,t,f)&=&y_-(x_0,s,t,f),\h{222}\\
\lim\limits_{\xi\downarrow x_0} y_-(\xi,s,t,f)&=&y_+(x_0,s,t,f).\h{223}
\ee
\item [5.] For a sequence of sets $E_n\subset\R$, let us denote the set of 
all  cluster points of sequences $\{\rho_n\in E_n\}$ by  $\lim E_n$. Let $\{f_n\}$ be a sequence of convex functions such that $f_n\rr f$ in $C^0_{\mbox{loc}}(\R)$ and $\lim\limits_{|p|\rr \f} \inf\limits_n \frac{f_n(p)}{|p|}=\f$. Let $C>0,$ then there exists $p_1\geq 1$ depending only on $C$ such that 
for all $n, |p|>p_1$
\be
&&\frac{f_n^*(p)}{|p|}\geq C+1, \h{224}\\
 &&v(\cdot, \cdot, f_n)\rr v(\cdot, \cdot, f) \ \mbox{in}\  C^0_{\mbox{loc}}(\R\times(0,\f))\h{225},\\
&&\lim ch(x,s,t,f_n)\subset ch(x,s,t,f)\h{226}.
\ee
\item [6.] As $k\rr \f$, let  $(x_k,t_k)\rr(x,t)$. Let $f, f_k$  be as in (5). Let $y_k\in ch(x_k,s,t_k,f_k)$ such that 
$y_k\rr y.$ Then $y\in ch(x,s,t,f)$.
\item [7.] Let $u_{0,k}\rightharpoonup u_0$ in $L^\f$ weak $^*$ topology and let $f, f_k$  be as in (5). Let $v_{0,k},v_0,v_k,v$, $ch(x,t,f_k),ch(x,t,f)$ as in (\ref{24}), (\ref{25}), (\ref{27}) associated to
$u_{0,k}$ and $u_0$ respectively. Let $y_k\in ch(x,t,f_k)$ such that $y_k\rr y$, then $y\in ch(x,t,f)$. 
\end{itemize} 
\label{lemma23}\end{lemma}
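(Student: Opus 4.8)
The plan is to treat the seven items essentially one at a time, since they form a chain in which each later assertion uses the convex-analytic groundwork established in the earlier ones. Item 1 is a standard fact about the Legendre transform: under the superlinearity hypothesis \eqref{111} on $f$, one shows $f^*$ is everywhere finite, and then for fixed $R>0$ the inequality $f^*(p)\ge Rp - \sup_{|q|\le R} f(q)$ (take $q = R\,\mathrm{sign}(p)$ in the sup defining $f^*$) forces $f^*(p)/|p|\to\infty$. Item 2 is the standard continuity of the Legendre transform under locally uniform convergence with a uniform superlinear lower bound: the uniform coercivity confines all the relevant suprema to a fixed compact $q$-interval on each compact $p$-set, on which $f_n\to f$ uniformly, giving $f_n^*\to f^*$ uniformly there.

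For item 3, I would first observe that $v_0$ in \eqref{24} is Lipschitz with constant $\|u_0\|_\infty=M$, and that the Hopf--Lax infimum \eqref{25} of a Lipschitz function plus a coercive kernel is again Lipschitz with the same constant (a one-line argument using that $v_0(y)+tf^*((x-y)/t)$, as a function of $x$ for fixed $y$, has slope $(f^*)'$ and one optimizes). The coercivity bound \eqref{214} then shows that for the infimum in \eqref{25} it never helps to take $|{(x-y)/t}| > p_0$, since such a $y$ is beaten by $y=x$; this yields nonemptiness of $ch$, the restricted formula \eqref{215}, and \eqref{216}--\eqref{217} by the semigroup/dynamic-programming identity for the Hopf--Lax operator. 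Item 4 is the heart of the geometric part: the monotonicity \eqref{220} of $x\mapsto y_\pm$ follows from a standard crossing/exchange argument on minimizers (if $x_1<x_2$ but a minimizer of $x_2$ lies to the left of one for $x_1$, swapping them strictly decreases the total cost, using convexity of $f^*$), and \eqref{219} is the concatenation property of minimizers along the Hopf--Lax semigroup. When $f^*$ is \emph{strictly} convex the crossing argument becomes strict, giving non-intersection of characteristic segments in the open strip \eqref{221}, and \eqref{222}--\eqref{223} are then the usual left/right-continuity statements for the extreme backward characteristics, proved by extracting limits and using that an interior intersection is forbidden.

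Items 5, 6, 7 are the genuine stability statements and I expect item 5 (together with its corollaries 6, 7) to be the main obstacle, since it must combine the Legendre-transform stability of item 2 with the stability of the variational problem \eqref{25} under both flux perturbation and (in item 7) weak-$*$ data perturbation. The route is: \eqref{224} is just item 1 applied uniformly in $n$ using the uniform superlinearity hypothesis; then \eqref{225} follows by inserting $f_n^*\to f^*$ (item 2) into the Hopf--Lax formula and using that, by \eqref{224}, all competitor $y$'s may be restricted to a fixed compact set depending only on the a priori bound $C$, on which everything converges uniformly. For \eqref{226}, one takes a convergent sequence of minimizers $\rho_n\in ch(x,s,t,f_n)$, $\rho_n\to\rho$; passing to the limit in the minimality inequality $v(x,s,f_n)+ (t-s) f_n^*((x-\rho_n)/(t-s)) \le v(x,s,f_n) + (t-s) f_n^*((x-y)/(t-s))$ for every $y$, using \eqref{225} and item 2, shows $\rho$ is a minimizer for $f$. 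Item 6 is the same argument with the base point $(x_k,t_k)$ also moving, which is harmless because $v$ is jointly continuous (and jointly Lipschitz in $x$ locally uniformly in $t$) by \eqref{225} and standard Hopf--Lax regularity; item 7 is again the identical limiting argument, where the only new input is that $v_{0,k}\to v_0$ locally uniformly — which holds because $u_{0,k}\rightharpoonup u_0$ weak-$*$ in $L^\infty$ implies $v_{0,k}(x)=\int_0^x u_{0,k}\to\int_0^x u_0 = v_0(x)$ pointwise, and the family $\{v_{0,k}\}$ is uniformly Lipschitz, hence the convergence is locally uniform by Arzel\`a--Ascoli. The one point requiring care throughout is keeping the compactness of the competitor set uniform — i.e., that the cutoff $p_1$ in \eqref{224} depends only on the a priori bound and not on $n$ — which is exactly what the hypothesis $\lim_{|p|\to\infty}\inf_n f_n(p)/|p| = \infty$ is there to guarantee.
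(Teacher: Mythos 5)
Your proposal follows essentially the same route as the paper's proof: items 1 and 2 via the standard Legendre-transform estimates ($f^*(p)\ge R|p|-f(R\,\mathrm{sign}(p))$ and confinement of the suprema to a fixed compact set under uniform superlinearity), item 3 by showing the competitor $y=x$ beats any $y$ with $|(x-y)/t|>p_0$ so the infimum is attained on a compact set, item 4 by the crossing/exchange argument together with the dynamic-programming concatenation of minimizers (strict convexity of $f^*$ giving strict non-intersection), and items 5--7 by passing to the limit in the minimality inequalities, with the weak-$*$ convergence of $u_{0,k}$ upgraded to locally uniform convergence of $v_{0,k}$ exactly as in the paper via equi-Lipschitz bounds and Arzel\`a--Ascoli. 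The argument is sound and matches the paper's own proof in the appendix.
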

Then we have the following Lax-Oleinik type of explicit formula.
\begin{theorem}\h{theorem24} Let $f$ be a $C^1$ convex function and $u_0\in L^\f(\R).$
Let $v(x,t)=v(x,t,f)$ be the associated value function as in (\ref{25}). Let $u=\frac{\partial v}{\partial x}$, then for $t>0$ and a.e. $x\in\R$,
\be\h{227}
f^\p(u(x,t))=\frac{x-y(x,t)}{t}.
\ee
If $u_0\in C^0(\R)\cap L^\f(\R),$ then for a.e. $x\in\R,$
\be\h{228}
u(x,t)=u_0(y_+(x,t)).
\ee
\begin{proof} From (\ref{42}), choose a sequence $\{f_n\}\subset C^2(\R)$ of uniformly convex function such that $f_n\rr f$ in $C^1_{\mbox{loc}}(\R)$ and $\lim\limits_{|p|\rr \f}\inf_{n}\frac{f_n(p)}{|p|}=\f.$ Let $v_n(x,t)=v(x,t,f_n)$, $v(x,t)=v(x,t,f)$, $y_{+,n}(x,t)=y_+(x,t,f_n)$, $y_+(x,t)=y_+(x,t,f)$ and $u_n=\frac{\partial v_n}{\partial x},u=\frac{\partial v}{\partial x}.$ Let $D(t)$ be the points of discontinuities of $y_+.$ Then $ch(x,t)=\{y_+(x,t)\}$ if $x\notin D(t)$ and thus from (\ref{226}) $\lim\limits_{n\rr \f} y_{+,n}(x,t)=y_+(x,t)$ for $x\notin D(t).$  From Lemma \ref{lemma21}, let $u_n\rr w$ in $L^1_{\mbox{loc}}(\R\times(0,\f))$ and hence 
a.e. $(x,t)$. Therefore from Lax-Oleinik \cite{4} explicit formula, for a.e. $t$, a.e. $x\notin D(t),$
\be\h{229}
\begin{array}{lll}
f^\p(w(x,t))&=&\displaystyle\lim\limits_{n\rr \f} f^\p_n(u_n(x,t))\\
&=&\displaystyle\lim\limits_{n\rr \f} \frac{x-y_{+,n}(x,t)}{t}\\
&=&\displaystyle\frac{x-y_+(x,t)}{t}
\end{array}
\ee
\noindent Claim: $u=w$.\\
From (\ref{225}), $v_n\rr v$ in $C^0_{\mbox{loc}}(\R\times[0,\f)),$ hence for $\phi\in C^\f_{c}(\R\times[0,\f))$
\begin{eqnarray*}
\int\limits_{-\f}^{\f}\int\limits_0^\f v\frac{\partial \phi}{\partial x}dxdt&=&\lim\limits_{n\rr\f}\int\limits_{-\f}^{\f}\int\limits_0^\f  v_n \frac{\pa \phi}{\pa x}dxdt\\
&=&-\lim\limits_{n\rr\f}\int\limits_{-\f}^{\f}\int\limits_0^\f \frac{\pa v_n}{\pa x}\phi dxdt\\
&=&-\lim\limits_{n\rr\f}\int\limits_{-\f}^{\f}\int\limits_0^\f u_n\phi dxdt\\
&=&-\int\limits_{-\f}^{\f}\int\limits_0^\f w \phi dxdt.
\end{eqnarray*}
Hence $w=\frac{\pa v}{\pa x}=u$. This proves the claim.
\par First assume that $u_0\in BV(\R).$ Then for any $s,t\geq 0$, we have 
\be\h{230}
\int\limits_\R|u(x,s)-u(x,t)|dx \leq Lip (f,K)|s-t|TV(u_0).
\ee
From (\ref{229}), (\ref{230}), choose a sequence $s_k\rr t$, a null set $N \supset D(t) $ such that for all $x\notin N$, $u(x,s_k)\rr u(x,t)$ and $f^\p(u(x,s_k))=\frac{x-y_+(x,s_k)}{t}.$
 Since $x\notin D(t)$, 
$ch(x,t)=\{y_+(x,t)\}$, therefore from (6) of Lemma \ref{lemma23} $y_+(x,s_k)\rr y_+(x,t)$
and 
\be\h{231}
\begin{array}{llll}
f^\p(u(x,t))&=&\displaystyle \lim\limits_{k\rr \f} f^{\p}(u(x,s_k))\\
&=&\displaystyle\lim\limits_{k\rr \f}\frac{x-y_+(x,s_k)}{t}\\
&=&\displaystyle\frac{x-y_+(x,t)}{t}.
\end{array}
\ee
Let $u_0\in L^\f(\R)$ and $u_{0,k}\in BV(\R)$ such that $u_{0,k}\rr u_0$ in $L^1_{\mbox{loc}}(\R)$ and almost everywhere. Let $u_k$ be the solution of (\ref{11}) with initial data $u_{0,k}$,  then from $L^1_{\mbox{loc}}$ contraction $u_k(x,t)\rr u(x,t)$ for a.e. $x\in\R, t>0$. Let $y_{+,k},y_+$
be as in (\ref{29}) for $u_{0,k}$ and $u_0$ respectively. Then from (\ref{231}) and (7) of Lemma
\ref{lemma23}, there exists a null set $N\supset D(t)$ such that for $x\notin N$, $y_{+,k}(x,t)\rr y_+(x,t)$ and 
\begin{eqnarray*}
f^\p(u(x,t))&=&\lim\limits_{k\rr \f}f^\p(u_k(x,t))\\
&=&\lim\limits_{k\rr \f}\frac{x-y_{+,k}(x,t)}{t}\\
&=& \frac{x-y_+(x,t)}{t}.
\end{eqnarray*} 
This proves (\ref{227}).
\par Let $u_0\in C^0(\R),$ then from the  monotonicity of $y_{+,n}$ and uniform convexity, it follows that 
$$u_n(x,t)=u_0(y_{+,n}(x,t)).$$
Hence as in the previous case for a.e. $s$, a.e. $x,$ $u_n(x,s)\rr u(x,s)$ and $y_{+,n}(x,s)\rr y_+(x,s) $. Consequently  
\begin{eqnarray*}
u(x,s)=\lim\limits_{n\rr \f} u_n(x,s)=\lim\limits_{n\rr \f} u_0(y_{+,n}(x,s))=u_0(y_+(x,s)).
\end{eqnarray*}
Next assume that $u_0\in BV(\R)$, then $u(x,s_k)\rr u(x,t)$ as $s_k\rr t$, a.e. $x$. Therefore letting $s_k\rr t$, $x\notin N\supset D(t)$ such that $y_+(x,s_k)\rr y_+(x,t)$, we have 
\begin{eqnarray*}
u(x,t)&=& \lim\limits_{k\rr \f} u(x,s_k)=\lim\limits_{k\rr \f} u_0(y_{+}(x,s_k))\\
&=& u_0(y_+(x,s)).
\end{eqnarray*}
Let $u_0\in C^0$, then approximate $u_0$ by $u_{0,k}\in C^0(\R)\cap BV(\R)$ in $L^1_{\mbox{loc}}$ norm. Then from $L^1_{\mbox{loc}}$ contraction, for a.e. $x,$ $u_k(x,t)\rr u(x,t),$
$y_{+,k}(x,t)\rr y_+(x,t)$. Thus
$$u(x,t)=\lim\limits_{k\rr \f} u_k(x,t)=\lim\limits_{k\rr \f}u_0(y_{+,k}(x,t))=u_0(y_+(x,t)).$$
This proves (\ref{228}) and hence the Theorem.
\end{proof}
\end{theorem}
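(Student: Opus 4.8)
The plan is to reduce Theorem \ref{theorem24} to the classical Lax--Oleinik formula for uniformly convex $C^2$ fluxes and then pass to the limit, controlling the characteristic points with the Stability Lemma \ref{lemma23} and the solutions themselves with the approximation Lemma \ref{lemma21}. First I would use (\ref{111}) to pick $\{f_n\}\subset C^2(\R)$ uniformly convex with $f_n\rr f$ in $C^1_{\mbox{loc}}(\R)$ and $\lim_{|p|\rr\f}\inf_n f_n(p)/|p|=\f$. For each such $f_n$ the minimizer in (\ref{25}) is a.e.\ unique, the entropy solution $u_n=\pa v_n/\pa x$ obeys the classical identity $f_n^\p(u_n(x,t))=(x-y_{+,n}(x,t))/t$ for a.e.\ $(x,t)$, and $u_n(x,t)=u_0(y_{+,n}(x,t))$ whenever $u_0$ is continuous, by the monotonicity of $y_{+,n}$.

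Next I would pass to the limit on a good set. Part 5 of Lemma \ref{lemma23} gives $v_n\rr v$ in $C^0_{\mbox{loc}}$ and $\lim ch(x,s,t,f_n)\subset ch(x,s,t,f)$, so for any $x$ outside the jump set $D(t)$ of $y_+(\cdot,t)$ we have $ch(x,t,f)=\{y_+(x,t)\}$ and hence $y_{+,n}(x,t)\rr y_+(x,t)$; Lemma \ref{lemma21} gives $u_n\rr w$ in $L^1_{\mbox{loc}}(\R\times(0,\f))$, hence a.e.\ along a subsequence. Letting $n\rr\f$ in the classical identity, with $f_n^\p\rr f^\p$ locally uniformly, yields $f^\p(w(x,t))=(x-y_+(x,t))/t$ for a.e.\ $t$ and a.e.\ $x\notin D(t)$. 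To see that $w$ is in fact $u=\pa v/\pa x$, I would test the relation $u_n=\pa v_n/\pa x$ against $\phi\in C^\f_c(\R\times(0,\f))$, integrate by parts, and use $v_n\rr v$ in $C^0_{\mbox{loc}}$ together with $u_n\rr w$ in $L^1_{\mbox{loc}}$ to get $\int\!\!\int v\,\pa_x\phi=-\int\!\!\int w\,\phi$, i.e.\ $w=\pa v/\pa x$. This proves (\ref{227}) for a.e.\ $t$.

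Finally I would upgrade the conclusion to every $t>0$ and drop the regularity restrictions on $u_0$. For $u_0\in BV(\R)$, the time-modulus-of-continuity estimate (iii) from Kruzkov's theorem gives $s_k\rr t$ with $u(\cdot,s_k)\rr u(\cdot,t)$ a.e.; coupling this with part 6 of Lemma \ref{lemma23} (cluster points of $y_k\in ch(x_k,s,t_k,f_k)$ lie in $ch(x,s,t,f)$) forces $y_+(x,s_k)\rr y_+(x,t)$ for $x\notin D(t)$, so (\ref{227}) holds for all $t>0$. A general $u_0\in L^\f$ is then reached by approximating with $u_{0,k}\in BV$ in $L^1_{\mbox{loc}}$: $L^1_{\mbox{loc}}$-contraction gives $u_k(x,t)\rr u(x,t)$ a.e., and part 7 of Lemma \ref{lemma23} gives $y_{+,k}(x,t)\rr y_+(x,t)$ off a null set. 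Identity (\ref{228}) follows by the same three-fold limiting procedure, starting from $u_n(x,t)=u_0(y_{+,n}(x,t))$ for continuous $u_0$ and uniformly convex $f_n$, then passing to the limit successively in the flux, in time (using BV data), and in $u_0$ (using $C^0\cap BV$ data).

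The hard part, I expect, is two intertwined issues. First, identifying $w$ as the spatial derivative of the limiting value function $v$: mere a.e.\ convergence $u_n\rr w$ is not enough, and one must really exploit the uniform-on-compacta convergence of the $v_n$ through the weak formulation. Second, moving from ``a.e.\ $t$'' to ``every $t>0$'': because $ch(x,t,f)$ is genuinely set-valued at the jump points of $x\mapsto y_+(x,t)$ and this exceptional set $D(t)$ moves with $t$, one cannot fix a single full-measure set of $x$; the sequential stability of characteristic points in parts 6 and 7 of Lemma \ref{lemma23}, together with the time-regularity estimate, is precisely what closes this gap.
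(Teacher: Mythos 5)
Your proposal is correct and follows essentially the same route as the paper: the same $C^2$ uniformly convex approximation of the flux, the same identification of the limit $w$ with $\partial v/\partial x$ via the weak formulation and $v_n\rr v$ in $C^0_{\mbox{loc}}$, and the same three-stage limiting procedure (flux, then time via the $BV$ modulus-of-continuity estimate with parts 6 and 7 of Lemma \ref{lemma23}, then general $L^\f$ data). No gaps to report.
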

\par Next we pass onto quantitative version of the structure Theorem \cite{1}. For this, let us recall the definition of $R_\pm$ curves called the characteristic curves. 
\begin{definition}
Let $f\in C^1(\R)$ be a convex function and $y_\pm(x,t)=y_\pm(x,t,f)$, $y_\pm(x,s,t)=y_\pm(x,s,t,f)$, $ch(x,s,t)=ch(x,s,t,sf)$, $ch(x,t)=ch(x,t,f)$ as in (\ref{29}) to (\ref{212}). For $\al\in \R$, $0\leq s<t$, define 
\be
R_+(t,s,\al,u_0)&=&\sup\{x:y_+(x,s,t)\leq \al\},\h{2311}\\
R_-(t,s,\al,u_0)&=&\inf\{x:y_-(x,s,t)\geq \al\},\h{232}\\
R_{\pm}(t,\al,u_0)&=&R_\pm(t,0,\al,u_0).\h{233}
\ee
\end{definition}
From the comparison principle and (\ref{215}), $R_\pm$ satisfies the following 
\begin{lemma}\h{lemma25} Let $f$ be $C^1$ convex function having finite number of degeneracies. Also let  $u_0\in L^\f(\R), M=||u_0||_\f$ and $p_0>1$ as in (\ref{214}). Then 
\begin{itemize}
\item [1.] $t\mapsto R_\pm(t,\al,u_0)$ are uniformly Lipschitz continuous functions with 
\be
&& R_\pm(0,\al,u_0)=\al,\h{234}\\
&& \left\|\frac{dR_\pm}{dt}\right\|\leq 1+p_0,\h{235}\\
&& R_-(t,\al,u_0)\leq R_+(t,\al, u_0),\h{236}\\
&&y_-(R_\pm(t,\al,u_0),t)\leq \al \leq y_+(R_\pm(t,\al,u_0),t).\h{237}
\ee
\item [2.] Let $u_0\leq w_0$ and $y_{1,\pm}$, $y_{2,\pm}$ be the respective optimal characteristic points of $u_0$ and $w_0$. Then 
\be
&&y_{2,\pm}(x,t)\leq y_{1,\pm}(x,t),\h{238}\\
&& R_\pm(t,\al,u_0)\leq R_\pm(t,\al,w_0)\h{239}.
\ee
\item [3.] Let $u_{0,n}\rr u_0$ in $L^1_{\mbox{loc}}(\R),$ then $\lim\limits_{n\rr \f}R_\pm(t,\al,u_{0,n})$ exist in $C^0_{\mbox{loc}}(\R)$ and satisfies
\begin{itemize}
\item [i.] If for all $n$, $R_-(t,\al,u_{0,n})\leq R_-(t,\al,u_0)$, then
$$\lim\limits_{n\rr \f}R_-(t,\al,u_{0,n})=R_-(t,\al,u_0).$$ 
\item [ii.] If for all $n$, $R_+(t,\al,u_{0,n})\geq R_+(t,\al,u_0)$, then
$$\lim\limits_{n\rr \f}R_+(t,\al,u_{0,n})=R_+(t,\al,u_0).$$ 
\end{itemize}
\item [4.] Let $0<s<t$, then 
\be
R_+(t,s,\al,u_0)&=&R_-(t,s,\al,u_0),\h{240}\\
R_\pm(t,\al,u_0)&=&R_\pm(t,s,R_\pm(s,\al,u_0),u_0)\h{241}.
\ee
If for some $\al,\B$ and  $T>0$, $R_+(T,\al,u_0)=R_+(T,\B,u_0)$ or $R_+(T,\al,u_0)=R_-(T,\B,u_0)$, then for $t>T, R_+(t,\al,u_0)=R_+(t,\B,u_0)$ or $R_+(t,\al,u_0)=R_-(t,\B,u_0)$ respectively.
\item [5.] Suppose for some $T>0$, 
\be\h{242}
R_-(T,\al,u_0)<R_+(T,\al,u_0),
\ee
then for $R_-(T,\al,u_0)<x<R_+(T,\al,u_0), \ \al\in ch(x,T)$ and 
\be\h{243}
f^\p(u(x,T))=\frac{x-\al}{T}.
\ee
\item [6.] Let $\{u_{0,n}\}$ be a bounded sequence in $L^\f(\R),$ $\{f_n\}$ be a sequence of $C^1$ convex  functions  and $p_0>0$ such that for $|p|>p_0$, for all $n$
\be
&&f_n^*(p)-M|p|>\sup\limits_mf^*_m(0),\h{244}\\
&&u_{0,n}\rightharpoonup u_0 \ \mbox{in}\ L^\f\ \mbox{weak}^*,\h{245}\\
&&f_n\rr f\ \mbox{in}\ C^0_{\mbox{loc}}(\R)\h{246}.
\ee
Then for $t>0$, a.e. $x\in \R$, 
\be
\lim\limits_{n\rr \f} f_n^\p(u_n(x,t))=f^\p(u(x,t))\h{247},
\ee
where $u_n$ is the solution of (\ref{11}), (\ref{12}) with flux $f_n$ and initial data $u_{0,n}.$
\item [7.] Let $T>0$, $\{y_0\}=ch(x_0,T)$, $f^\p(p)=\frac{x_0-y_0}{T}.$ Let $\g$ be the characteristic line segment defined by 
\be
\g(\T)=x_0+\frac{x_0-y_0}{T}(\T-T)\h{248}.
\ee
Let $\tilde{u}_0\in L^\f(\R)$ defined by 
\be\h{249}
\ti{u}_0(x)=\left\{\begin{array}{lll}u_0(x) &\mbox{if}& x<y_0,\\
p &\mbox{if}& x>y_0
\end{array}\right.
\ee
and $\ti{u}$ be the corresponding solution of (\ref{11}), (\ref{12}). Then for $0<t<T$, $\ti{u}$ is given by 
\be\h{250}
\ti{u}(x,t)=\left\{\begin{array}{lll}u(x,t) &\mbox{if}& x<\g(t),\\
p &\mbox{if}& x>\g(t).
\end{array}\right.
\ee
\item [8.] Let $\al\in\R$, $a_1\leq a_2$, $b_2\leq b_1$ and $u_0$ be such that 
\be\h{251}
u_0(x)\in \begin{cases} [a_1,a_2] \mbox{ if } x>\al,\\
 [b_2,b_1] \mbox{ if } x<\al.
\end{cases}
\ee
Then for a.e. $x$,
\be\h{252}
u(x,t)\in\begin{cases}[a_1,a_2] \mbox{ if } x>R_+(t,\al,u_0),\\
[b_2,b_1] \mbox{ if } x<R_-(t,\al,u_0).
\end{cases}
\ee
Furthermore if $b_2>a_2$, then for all $t>0, s>0$
$$R_-(t,\al,u_0)=R_+(t,\al,u_0).$$
\be\h{253}
\begin{array}{lll}
\displaystyle\min_{\substack{p\in[b_2,b_1]\\ q\in[a_1,a_2]}}\left(\frac{f(p)-f(q)}{p-q}\right) \leq\displaystyle
 \frac{dR_1}{dt}(t,\al,u_0)\leq\displaystyle \max_{\substack{p\in[b_2,b_1]\\ q\in [a_1,a_2]}} \frac{f(p)-f(q)}{p-q}.
\end{array}\ee
\end{itemize}
\begin{proof}
(1) to (5) follows as in Lemma 4.2 in \cite{1}. Let  $y_{+,n}=y_+(x,t,f_n)$, then from  (7) of Lemma \ref{lemma23}, $\lim\limits_{n\rr \f}y_{+,n}(x,t)=y_+(x,t)$ for all $x\notin D(t)$. Hence from (\ref{227}), for a.e. $x\in\R, x\notin D(t),$
\begin{eqnarray*}
\lim\limits_{n\rr \f} f^\p_n(u_n(x,t))&=& \lim\limits_{n\rr \f} \frac{x-y_{+,n}(x,t)}{t}\\
&=& \frac{x-y_+(x,t)}{t}\\
&=& f^\p(u(x,t)). 
\end{eqnarray*}
This proves (6).
\par From (4) of Lemma \ref{lemma41}, $f^*$ is strictly convex and thus from (\ref{223}) and (\ref{224}) for $s=0,$ we have 
\be
\lim\limits_{\xi\uparrow \g(t)} y_+(\xi,t)&=& y_-(x_0,t)=y_0,\h{254}\\
\lim\limits_{\xi\downarrow \g(t)} y_-(\xi,t)&=& y_+(x_0,t)=y_0.\h{255}
\ee
First assume that $f^\p$ is a strictly increasing function. Let $w$ denote the right hand side  of (\ref{250}).
From (\ref{227}) and (\ref{254}), we have for $0<t<T$,
\begin{eqnarray*}
\lim\limits_{\xi\uparrow \g(t)}f^\p(w(\xi,t))&=&\lim\limits_{\xi\uparrow \g(t)} f^\p(u(\xi,t))\\
&=&\lim\limits_{\xi\uparrow\g(t)} \frac{\xi-y_+(\xi,t)}{t}\\
&=&\frac{\g(t)-y_0}{t}=\frac{x_0-y_0}{t}=f^\p(p).
\end{eqnarray*}
$$\lim\limits_{\xi\downarrow \g(t)} f^\p(w(\xi,t))=f^\p(p).$$
Since $f^\p$ is strictly increasing, $w(\g(t)-,t)$ and $w(\g(t)+,t)$ exist and $w(\g(t)-,t)=w(\g(t)+,t)=p$. Hence $w$ is continuous across $\g(\cdot)$ and is a solution for $x\neq \g(t).$ Therefore $w$ is the solution of (\ref{11}) in $\R\times(0,T)$. Whence $w=\ti{u}$ in $\R\times (0,T).$
\par For general $f$, let $\{f_n\}\subset C^1(\R)$ be a sequence of convex function converging to $f$ in $C^1_{\mbox{loc}}(\R)$ with $\lim\limits_{|p|\rr \f}\inf\limits_n \frac{f_n(p)}{|p|}=\f$. Since 
\be\h{256}
\lim ch(x_0,T,f_n)\subset ch(x_0,T,f)=\{y_0\}
\ee
and therefore  from (\ref{254}), (\ref{256}) choose $\{x_n\}, \{y_n\}$ such that 
$$ch(x_n, T,f_n)=\{y_n\}, \ \lim\limits_{n\rr \f}(x_n,y_n)=(x_0,y_0).$$
Define for $0<t<T$
$$\g_n(t)=x_n+\left(\frac{x_n-y_n}{T}\right)(t-T), \ f^\p_n(p_n)=\frac{x_n-y_n}{T}.$$
\begin{eqnarray*}
\ti{u}_{0,n}(x)=\left\{\begin{array}{lll}
u_0(x) &\mbox{if}& x<y_n,\\
p_n &\mbox{if}& x>y_n,
\end{array}\right.
\end{eqnarray*}
 \begin{eqnarray*}
\ti{u}_{n}(x,t)=\left\{\begin{array}{lll}
u_n(x,t) &\mbox{if}& x<\g_n(t),\\
p_n &\mbox{if}& x>\g_n(t),
\end{array}\right.
\end{eqnarray*}
 where $u_n$  is the solution of (\ref{11}), (\ref{12}) with flux $f_n$ and  data $u_0$.  Hence by previous analysis, for $0<t<T$, $\ti{u}_n$ is the solution of (\ref{11}), (\ref{12}) with flux $f_n$ and initial data $\ti{u}_{0,n}$. From Lemma \ref{lemma21}, $u_n\rr u$ a.e. $(x,t)\in \R\times (0,\f)$. Let $w$ denotes the right hand side of (\ref{250}). Then $\ti{u}_n\rr w$ a.e. $(x,t)\in \R\times(0,T)$. Since  $\ti{u}_{0,n}\rr \ti{u}_0$, by dominated convergence Theorem, letting $n\rr \f$ in the entropy inequality we obtain $w$ is the solution of (\ref{11}) and (\ref{12}) with flux $f$ and initial data $\ti{u}_0$. Hence $w=\ti{u}$ in $\R\times (0,T)$. This proves (7).
 \par In order to prove (8), first assume that $u_0$ is continuous for $x<\al.$ Let $x<R_-(t,\al,u_0)$, then from (\ref{237}), $y_+(x,t)<\al$ and from (\ref{228}), for a.e. $x<R_-(t,\al,u_0)$
 \be\h{257} u(x,t)=u_0(y_+(x,t))\in [b_2,b_1].\ee
 Similarly, if $u_0$ is continuous for $x>\al$, then for  $x>R_+(t,\al,u_0).$
 \be\h{258} u(x,t)=u_0(y_+(x,t))\in [a_1,a_2].\ee
 Let $u_{0,n}\in L^\f$ such that $u_{0,n}$ is continuous in $(-\f,\al)\cup (\al,\f)$ and $u_{0,n}\rr u_0$ in $L^1_{\mbox{loc}}(\R)$. Let $u_n$ be the solution of (\ref{11}), (\ref{12}) with the initial data $u_{0,n}$. Then for $x<$$R_-(t,\al,u_{0,n}),$\\$ u_n(x,t)\in [b_1,b_2]$, for $x>R_+(t,\al, u_{0,n}), u_n(x,t)\in [a_1,a_2]$ and $u_n(x,t)\rr u(x,t)$ a.e. $(x,t)\in \R\times(0,\f)$. Now from (3) of Lemma \ref{lemma25}, we have 
 $$R_-(t,\al,u_0)\leq \lim R_-(t,\al,u_{0,n})\leq \lim R_+(t,\al,u_{0,n})\leq R_+(t,\al,u_0).$$
 Hence for $x<R_-(t,\al,u_0), u(x,t)\in [b_2,b_1]$ and $u(x,t)\in [a_1,a_2]$ if $x>R_+(t,\al,u_0)$. This proves (\ref{252}). 
 \par Proof of (\ref{253}) follows from the approximation procedure. First assume that $u_0$ is  a piecewise constant function with finite number of jumps. Let $\{f_n\}$ be a sequence of piecewise affine convex  functions such that $f_n\rr f$ in $C^0_{\mbox{loc}}(\R)$ and $\sup\limits_nLip (f_n,K)<\f$ for any compact set $K\subset \R$. Let $u_n$ be the solution of (\ref{11}), (\ref{12}) with flux $f_n$ and initial data $u_0$. Then from Lemma \ref{lemma21}, $u_n\rr u$ a.e. in $\R\times (0,\f)$.
 \par Let $T_1^n$ be the first time of interaction of waves for $u_n$. Then for $0<t<T_1^n$, let 
 \be\h{259}
 m_1^n=\frac{f_n(u_0(\al+))-f_n(u_0(\al-))}{u_0(\al+)-u_0(\al-)},
 \ee
$l^n_1(t)=\al+m_1^nt$, $y_{\pm,n}(x,t)=y_{\pm}(x,t,f_n),$ then 
 \be\h{260}
 u_n(x,t)\in\begin{cases}
\ [a_1,a_2] \mbox{ if } x>l_1^n(t),\\
\ [b_2,b_1] \mbox{if } x<l_1^n(t), 
\end{cases} \ee
 and 
 \be\h{261}
 y_{-,n}(l_1^n(t),t)\leq \al \leq y_{+,n}(l_1^n(t),t).
 \ee
 Let $T_2^n>T_1^n$ be the second time of interaction waves. For $t\in(T_1^n, T_2^n)$, let 
 \begin{eqnarray*}
 m_2^n&=&\frac{f_n(u_n(l_1^n(T_1^n)+,T_1^n))-f_n(u_n(l_1^n(T_1^n)-,T_1^n))}{u_n(l_1^n(T_1^n)+,T_1^n))-u_n(l_1^n(T_1^n)-,T_1^n))}.\\
  l_2^n(t)&=&l_1^n(T_1^n)+m_2^n(t-T_1^n).
 \end{eqnarray*}
 Then from (\ref{260}), $u_n(l_1^n(T_1^n)+,T_1^n)\in[b_2,b_1]$ and $u_n(l_1^n(T_1^n)-,T_1^n)\in[a_1,a_2]$, hence for $T_1^n<t<T_2^n$,
  \begin{eqnarray*}
 u_n(x,t)\in \begin{cases}
 [a_1,a_2], \mbox{ if } x>l_1^n(t),\\
[b_2,b_1], \mbox{ if } x<l_1^n(t), 
\end{cases} 
\end{eqnarray*}
\begin{eqnarray*}
 y_{-,n}(l_2^n(t),T_1^n,t)\leq l^n_1(T_1^n) \leq y_{+,n}(l_2^n(t),T_1^n,t).
\end{eqnarray*}
Therefore  from (\ref{219}) we have 
\begin{eqnarray*}
 y_{n,-}(l_2^n(t),t)\leq \al \leq y_{+,n}(l_2^n(t),t).
\end{eqnarray*}
Define 
\be\h{262}
r^n(t)=\left\{\begin{array}{lll}
l_1^n(t) &\mbox{if}& t\leq T_1^n,\\
l_2^n(t) &\mbox{if}& T_1^n\leq t \leq T_2^n.
\end{array}\right.
\ee
Then $r^n$ is continuous and for $t\in [0,T_2^n), t\neq T_1^n$, we have 
\be\h{263}
\begin{array}{lll}
\displaystyle\min_{\substack{p\in[b_2,b_1]\\ q\in[a_1,a_2]}}\left(\frac{f_n(p)-f_n(q)}{p-q}\right) \leq\displaystyle
 \frac{dr^n}{dt}(t)\leq\displaystyle \max_{\substack{p\in[b_2,b_1]\\ q\in [a_1,a_2]}} \frac{f_n(p)-f_n(q)}{p-q},
\end{array}\ee
 \begin{eqnarray}\h{264}
 u_n(x,t)\in\begin{cases}[a_1,a_2] \mbox{ if } x>r^n(t),\\
[b_2,b_1] \mbox{ if } x<r^n(t), 
\end{cases}\end{eqnarray}
\be\h{265}
 y_{-,n}(r^n(t),t)\leq \al=r^n(0) \leq y_{+,n}(r^n(t),t).
\ee
Hence by induction and from front tracking Lemma \ref{lemma22}, $r^n(\cdot)$ is well defined for all $t>0$ and 
(\ref{263}) to (\ref{265}) holds. Let for a subsequence, $r^n(\cdot)\rr r(\cdot)$ in $C^0_{\mbox{loc}}([0,\f))$.
From (\ref{265}), (\ref{226}), and (\ref{264}) we have for all $t>0$,
 \begin{eqnarray}\h{266}
 u(x,t)\in\begin{cases}
 [a_1,a_2] \mbox{ if } x>r(t),\\
 [b_2,b_1] \mbox{ if } x<r(t), 
\end{cases}\end{eqnarray}
\be\h{267}
 y_-(r(t),t)\leq \al=r(0) \leq y_+(r(t),t).
\ee
Since $f^*$ is strictly increasing, from (\ref{221}), from the definition of $R_\pm$, we have 
\be\h{268}
R_-(t,\al,u_0)\leq r(t)\leq R_+(t,\al,u_0)
\ee
and for a.e. $t$, 
\be\h{270}
\begin{array}{lll}
\displaystyle\min_{\substack{p\in[b_2,b_1]\\ q\in[a_1,a_2]}}\left(\frac{f(p)-f(q)}{p-q}\right) 
\leq\displaystyle
 \frac{dr}{dt}(t)\leq\displaystyle \max_{\substack{p\in[b_2,b_1] \\q\in [a_1,a_2]}} \frac{f(p)-f(q)}{p-q}.
\end{array}\ee
This proves (\ref{253}) when $u_0$ is piecewise constant. Let $u_0\in L^\f(\R)$ and $\{u_{0,k}\}$ be a sequence of piecewise constant function converging to $u_0$ in $L^1_{\mbox{loc}}$ and a.e. $x$. Denote 
$u^k$ be the corresponding solution of (\ref{11}), (\ref{12}) with initial data $u_{0,k}$ and  $y_\pm^k$ be the corresponding extreme characteristics. Let  $r^k(t)=R_-(t,\al,u_{0,k}),$ then from (\ref{267}) we have 
\be\h{n271} y_-^k(r^k(t),t)\leq \al \leq y_+^k(r^k(t),t)\ee
and satisfies (\ref{266}), (\ref{270}). Hence for a subsequence let $r^k\rr r$ in $C^0_{\mbox{loc}}(\R)$.
Then $u$ satisfies (\ref{266}) and from (7) of Lemma \ref{lemma23}, (\ref{n271}), we have 
\begin{eqnarray*}
y_-(r(t),t)\leq \al \leq y_+(r(t),t),\\
R_-(t,\al,u_0)\leq r(t) \leq R_+(t,\al,u_0).
\end{eqnarray*}
Next we claim that for all $t>0$, $R_-(t,\al,u_0)=R_+(t,\al,u_0)$. If not, then there exists a $T>0$ such that for $0<t<T$, $R_-(t,\al,u_0)<R_+(t,\al,u_0)$. Then from (\ref{243}), $x\mapsto u(x,t)$ is a strictly increasing continuous  function for $x\in (R_-(t,\al,u_0), R_+(t,\al,u_0))$. Suppose for some $0<t_0<T$, $R_-(t_0,\al,u_0)<r(t_0)<R_+(t_0,\al,u_0)$, then from the hypothesis $b_2>a_2$ and (\ref{266}) $u$ has a jump discontinuity at $x=r(t_0)$ which is a contradiction. Hence by continuity, for all $t<T$ either  $r(t)=R_-(t,\al,u_0)$ or $r(t)=R_+(t,\al,u_0)$. 
Assume  for $0<t\leq T_0$, $r(t)=R_-(t,\al,u_0)<R_+(t,\al,u_0)$, then from (\ref{243}) and (\ref{266})
\begin{eqnarray*}
&&f^\p(u(x,t))\in [a_1,a_2], \ \mbox{for} \ x>r(t)\\
&&f^\p(u(x,t))=\frac{x-\al}{t}.
\end{eqnarray*}
Hence define $f^\p(p)\in [a_1,a_2]$ by 
\begin{eqnarray*}
f^\p(p)&=&\lim\limits_{x\uparrow r(T_0)} f^\p(u(x,t))\\
&=& \frac{r(T_0)-\al}{T_0}.
\end{eqnarray*}
Then $\al\in ch(r(T_0),T_0)$ and $s(t)=\al+tf^\p(p)$ is a characteristic line segment for $0\leq t \leq T_0$. Let 
\begin{eqnarray*}
u_{1,0}=\left\{\begin{array}{llll}
p &\mbox{if}& x<\al,\\
u_0(x) &\mbox{if}& x>\al.
\end{array}\right.
\end{eqnarray*}
For $0<t<T_0$, 
\begin{eqnarray*}
u_1(x,t)=\left\{\begin{array}{llll}
p &\mbox{if}& x<s(t),\\
u(x,t) &\mbox{if}& x>s(t),
\end{array}\right.
\end{eqnarray*}
then from (\ref{250}), for $0<t<T_0$, $u_1$ is the solution of (\ref{11}), (\ref{12}) with initial data $u_{1,0}$. Since $b_2>a_2$, thus $u_{1,0}\leq u_0$. Consequently  for $0<t<T_0$, $u_1(x,t)\leq u(x,t)$
and $s(t)=R_-(t,\al,u_{1,0})\leq R_-(t,\al,u_0)=r(t).$ As $r(t)\leq s(t)$, therefore $s(t)=r(t)=R_-(t,\al,u_0)$ and then $r(\cdot)$ is a characteristic line segment for $u$ for $0<t\leq T_0$. Hence $y_+(x,t)\rr \al$ as $x\uparrow r(t)$ and 
\begin{eqnarray*}
\lim\limits_{x\uparrow r(t)} f^\p(u(x,t)) &=& \lim\limits_{x\uparrow r(t)} \frac{x-y_+(x,t)}{t}\\
&=& \frac{r(t)-\al}{t}\\
&=& f^\p(p)\in [f^\p(a_1),f^\p(a_2)].
\end{eqnarray*}
On the other hand, from (\ref{266}), $\lim\limits_{x\uparrow r(t)} f^\p(u(x,t))\in [f^\p(b_1),f^\p(b_2)]$. This implies that $f^\p(p)\in [f^\p(a_1), f^\p(a_2)]\cap [f^\p(b_2), f^\p(b_1)].$ Since for $r(t)<x<R_+(t,\al,u_0)$, $f^\p(u(x,t))=\frac{x-\al}{t}$ is an increasing function and therefore from (\ref{266}), $p\in [a_1,a_2]$ and $f^\p$ is  strictly increasing  function in  $(p,p+\e)\subset [a_1,a_2] $ for some $\e>0$. Since $b_2>a_2$, hence $f^\p(b_2)>f^\p(a_2)$ and therefore $f^\p(p)\notin [f^\p(a_1), f^\p(a_2)]\cap [f^\p(b_1), f^\p(b_2)],$ which is a contradiction. Similarly if $r(t)=R_+(t,\al,u_0)$ for some $0\leq t \leq T_0$. This proves (8) and hence the Lemma.
\end{proof}
\end{lemma}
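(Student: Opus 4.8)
The plan is to dispose of parts (1)--(5) quickly from the convex structure already in hand and to concentrate the real work on (6), (7) and (8), the last being the main obstacle.

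For (1)--(5) I would argue as in Lemma 4.2 of \cite{1}, the only new ingredient being the uniform control of degeneracies through the cut-off $p_0$. The derivative bound $\|dR_\pm/dt\|\le 1+p_0$ comes from item 3 of the Stability Lemma \ref{lemma23}, which confines every minimizer to the cone $|x-y|\le p_0 t$; the monotonicity (\ref{238})--(\ref{239}) is the comparison principle combined with the fact that raising the data moves each optimal point $y_\pm$ to the left; and the sandwich (\ref{237}) is immediate from the definitions (\ref{2311})--(\ref{232}) of $R_\pm$ as the extreme abscissae whose optimal characteristic crosses $\al$. The cocycle identities (\ref{240})--(\ref{241}) follow by composing minimizers through (\ref{26}), while the fan formula (\ref{243}) is read off from the Lax--Oleinik identity (\ref{227}) of Theorem \ref{theorem24}: when $R_-(T,\al)<R_+(T,\al)$, each interior $x$ has $\al$ as its unique characteristic point, so $f'(u(x,T))=(x-\al)/T$.

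Part (6) rests entirely on (\ref{227}). I would fix $t>0$ and $x\notin D(t)$, so that $ch(x,t,f)$ is the singleton $\{y_+(x,t)\}$; item 7 of Lemma \ref{lemma23} then forces $y_{+,n}(x,t)\to y_+(x,t)$ under the weak-$*$ convergence (\ref{245}) and flux convergence (\ref{246}). Substituting into $f_n'(u_n)=(x-y_{+,n})/t$ and letting $n\to\f$ gives (\ref{247}) for a.e. $x$, the set $D(t)$ being null because $x\mapsto y_+(x,t)$ is monotone by (\ref{220}). For (7) I would split into two cases. When $f'$ is strictly increasing, let $w$ be the right side of (\ref{250}); using (\ref{227}) and the one-sided limits (\ref{254})--(\ref{255}) one finds $f'(w(\g(t)\pm,t))=f'(p)$, whence strict monotonicity of $f'$ makes $w$ continuous across $\g$, and since $w$ solves (\ref{11}) off $\g$ it must be the entropy solution $\ti u$. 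For general convex $f$, approximate by strictly convex $f_n\to f$ in $C^1_{\mbox{loc}}$ with the coercivity normalisation, use the inclusion (\ref{256}) to choose $x_n\to x_0,\ y_n\to y_0$ with $ch(x_n,T,f_n)=\{y_n\}$, build the truncated pair $\ti u_{0,n},\ti u_n$, and pass to the limit by Lemma \ref{lemma21} and dominated convergence in the entropy inequality.

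Part (8) is where the difficulty lies. The range statement (\ref{252}) I would prove first for $u_0$ continuous on each side, using (\ref{228}) together with $y_+(x,t)<\al$ for $x<R_-$ and $y_+(x,t)>\al$ for $x>R_+$ (both from (\ref{237})), then remove continuity by approximation and part (3). The shock-speed bracket (\ref{253}) and the collapse $R_-=R_+$ are the crux. For piecewise-constant $u_0$ and piecewise-affine $f_n$ I would track a single interface $r^n(t)$ through the finitely many interactions guaranteed by Lemma \ref{lemma22}, checking at each interaction that the left and right traces remain in $[b_2,b_1]$ and $[a_1,a_2]$ and that the Rankine--Hugoniot speed stays inside (\ref{263}); letting $n\to\f$ produces $r(t)$ satisfying (\ref{266})--(\ref{270}), so $R_-\le r\le R_+$. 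The hard part, and the main obstacle, is to upgrade this to $R_-=R_+$ under $b_2>a_2$: if instead $R_-<R_+$ on some $(0,T)$, then (\ref{243}) makes $u(\cdot,t)$ strictly increasing and continuous on $(R_-,R_+)$, yet (\ref{266}) forces a genuine downward jump across $r(t)$, impossible unless $r(t)\in\{R_-,R_+\}$. To exclude, say, $r(t)=R_-(t,\al)<R_+(t,\al)$, I would insert the comparison datum $u_{1,0}$ equal to $p$ to the left of $\al$, with $f'(p)$ the left trace of $(x-\al)/t$ at $r(T_0)$, use (\ref{250}) and $u_{1,0}\le u_0$ to squeeze $s(t)=R_-(t,\al,u_{1,0})=r(t)$, and thereby force $f'(p)\in[f'(a_1),f'(a_2)]\cap[f'(b_2),f'(b_1)]$; since $b_2>a_2$ gives $f'(b_2)>f'(a_2)$ by strict monotonicity on the relevant piece, this intersection is empty, which is the desired contradiction. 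The general $u_0$ then follows by piecewise-constant approximation together with part (3).
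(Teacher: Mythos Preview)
Your proposal is correct and follows essentially the same approach as the paper: parts (1)--(5) are deferred to Lemma~4.2 of \cite{1}, (6) via item~7 of Lemma~\ref{lemma23} plus the Lax--Oleinik identity (\ref{227}), (7) first for strictly monotone $f'$ and then by $C^1$ approximation through (\ref{256}) and Lemma~\ref{lemma21}, and (8) by continuous/piecewise-constant approximation, front tracking for the speed bounds, and the comparison-with-$u_{1,0}$ contradiction to force $R_-=R_+$. The only point worth tightening is the inference $f'(b_2)>f'(a_2)$ in the final contradiction: for a merely $C^1$ convex $f$ with degeneracies this is not automatic from $b_2>a_2$, and the paper extracts it from the fan structure (\ref{243}), which forces $f'$ to be strictly increasing on some interval $(p,p+\varepsilon)\subset[a_1,a_2]$---you should make that dependence explicit rather than appealing to ``strict monotonicity on the relevant piece.''
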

\par In order to prove the Structure Theorem, let us recall the characteristic line and ASSP from \cite{1}.
\noindent Let $f$ be $C^1$ convex, $u_0\in L^\f$ and $u$ be the solution of (\ref{11}) and (\ref{12}). Let $a<b$, $p\in \R$, 
\be\h{271}
\g(t,a,p)&=&a+tf^\p(p),\\
D(a,b,p)&=& \{(x,t)\in \R\times (0,\f): \g(t,a,p)<x<\g(t,b,p)\}\h{272}.
\ee        
Let $y_\pm(x,t)=y_\pm(x,t,f)$ and define 
\begin{itemize}
\item [1.] Characteristic line: $\g(\cdot, a, p)$ is called a characteristic line if for all $t>0$,
\be\h{273}
a=\g(0,a,p)\in ch(\g(t,a,p),t).
\ee  
\item [2.] Asymptotically single shock packet (ASSP): $D(a,b,p)$ is called an  ASSP if 
\begin{itemize}
\item [i.] $\g(\cdot,a,p), \g(\cdot, b, p)$ are characteristic lines.
\item [ii.] $D(a,b,p)$ does not contain a characteristic line. 
\item [iii.] For $\al\in (a,b)$, $R_\pm(t,\al,u_0)\in D(a,b,p)$.
\end{itemize}
\end{itemize}
\begin{lemma}\label{lemma26} Let $f\in C^1(\R)$ with finite number of degeneracies.
\begin{itemize}
\item [1.] Let $\g(\cdot, a, p)$ be a characteristic line. Then 
\be\h{274}y_\pm(\g(t,a,p),t)=a.\ee
\item [2.] Let $A,\al_1\leq \al_2$, $\B_2\leq \B_1$, $u_0\in L^\f(\R)$ are given. Assume that $\bar{u}_{1,0}, \bar{u}_{2,0}\in L^\f(\R)$ such that $\bar{u}_{1,0}(x)\in [\B_2,\B_1], \bar{u}_{2,0}(x)\in [\al_1,\al_2]$ and define
\be\h{275}
u_0^1(x)=\left\{\begin{array}{lll}
\bar{u}_{1,0}(x) &\mbox{if}& x<A,\\
u_0(x) &\mbox{if}& x>A,
\end{array}\right.
\ee
\be\h{276}
u_0^2(x)=\left\{\begin{array}{lll}
u_0(x) &\mbox{if}& x<A,\\
\bar{u}_{2,0}(x) &\mbox{if}& x>A.
\end{array}\right.
\ee
Let $u_1$, $u_2$ be the corresponding solutions of (\ref{11}), (\ref{12}) with initial data $u_0^1$ and $u_0^2$. Then 
\begin{itemize}
\item [$(a_1)$.] Assume that 
\be\h{277}
\sup\limits_{t>0}y_+(R_-(t,A,u_0^1),t)<\f,
\ee
then there exist an $A_1\geq A$, $p_-\in\R$ such that 
\be
&&\lim\limits_{t\rr\f}y_+(R_-(t,A,u_0^1),t)=A_1.\h{278}\\
&&f^\p(p_-)=\lim\limits_{t\rr \f}\frac{R_-(t,A,u_0^1)-y_+(R_-(t,A,u_0^1),t)}{t}.\h{279}\\
&&f^\p(p_-)\in [f^\p(\B_2),f^\p(\B_1)].\h{280}\\
&&\g(\cdot,A_1,p_-)\ \mbox{is a characteristic line.}\h{281}
\ee
\item [$(a_2)$.] Assume that 
\be\h{282}
\inf\limits_{t>0}y_-(R_+(t,A,u_0^2),t)>-\f,
\ee
then there exist an $A_2\leq A$, $p_+\in\R$ such that 
\be
&&\lim\limits_{t\rr\f}y_-(R_+(t,A,u_0^2),t)=A_2.\h{283}\\
&&f^\p(p_+)=\lim\limits_{t\rr \f}\frac{R_+(t,A,u_0^2)-y_-(R_+(t,A,u_0^2),t)}{t}.\h{284}\\
&&f^\p(p_+)\in [f^\p(\al_1),f^\p(\al_2)].\h{285}\\
&&\g(\cdot,A_2,p_+)\ \mbox{is a characteristic line.}\h{286}
\ee
\end{itemize}
\end{itemize}
\begin{proof}
Note that from (4) of Lemma \ref{lemma41}, $f^*$ is a strictly convex function.
\begin{itemize}
\item [1.] Let $\g(s)=\g(s,a,p)$ and $v$ be the value function as in (\ref{25}). Suppose for some $s>0$, $y_+(\g(s),s)>a$, then for $t>s$, we have 
\begin{eqnarray*}
v(\g(t),t)&=& v(\g(s),s)+(t-s)f^*\left(\frac{\g(t)-\g(s)}{t-s}\right)\\
&=& v_0(y_+(\g(s),s))+sf^*\left(\frac{\g(s)-y_+(\g(s),s)}{s}\right)\\&+&(t-s)f^*\left(\frac{\g(t)-\g(s)}{t-s}\right)\\
&>& v_0(y_+(\g(s),s))+tf^*\left(\frac{\g(t)-y_+(\g(s),s)}{t}\right)\\
&\geq& v(\g(t),t),
\end{eqnarray*}
 which is a contradiction. Hence $y_+(\g(s),s)=a$. Similarly $y_-(\g(s),s)=a$ and this proves (1).
 \item [2.] Assume (\ref{274}) holds. Let $R(t)=R_-(t,A,u_0^1), y(t)=y_+(R(t),t)$ and 
 \be\h{287}
 \g(\T,t)=y(t)+\left(\frac{R(t)-y(t)}{t}\right)\T.
 \ee
 Let $t_1>t_2$. Then from (\ref{221}), for all $\T\in(0,t_2)$, either $\g(\T,t_1)=\g(\T,t_2)$ or  $\g(\T,t_1)\neq \g(\T,t_2)$. From (\ref{237}) $y(t)\geq A$, hence $\g(\T,t_1)\geq \g(\T,t_2)$ for $\T\in [0,t_2]$. Hence $y(t_1)\geq y(t_2).$ Therefore from (\ref{277}), $A_1$ exists satisfying (\ref{278}). Let $\T_0>0$ be fixed, then from the above analysis, for  $t>\T_0$, $t\rr \g(\T_0,t)$ is a non decreasing function. Also from (\ref{215}), $\left\{\frac{R(t)-y(t)}{t}\right\}$ is bounded. Hence $p_-$ exist and satisfies (\ref{279}), $\lim\limits_{t\rr \f} \g(\T_0,t)=A_1+f^\p(p_-)\T_0$. Thus from (7) of Lemma \ref{lemma23}, $A_1\in ch(A_1+f^\p(p_-)\T_0, \T_0)$ for all $\T_0>0,$ therefore $A_1\in ch(\g(\T, A_1, p_-), \T)$ for $\T>0$. Hence $\g(\cdot, A_1,p_-)$ is a characteristic line. This proves (\ref{281}).
 \par Next we prove (\ref{280}). From the strict convexity of $f^*$ as in the proof of (\ref{274}), we have for all  $0\le \T \leq t$, 
 $$\{y(t)\}=ch(\g(\T,t),\T).$$
 Therefore $R(\T)\leq \g(\T,t)$ for all $0<\T\leq t$. Letting $t\rr \f$ to obtain $R(\T)\leq \g(\T,A_1, p_-)$ for all $\T>0$. Define 
 \begin{eqnarray*}
 \ti{u}_0(x)=\left\{\begin{array}{lll}
 u_0(x) &\mbox{if}& x<A_1,\\
 p_-&\mbox{if}& x>A_1,\end{array}\right.
 \end{eqnarray*}
 \begin{eqnarray*}
 \ti{u}(x,t)=\left\{\begin{array}{lll}
 u(x,t) &\mbox{if}& x<\g(t,A_1,p_-),\\
 p_-&\mbox{if}& x>\g(t,A_1,p_-),\end{array}\right.
 \end{eqnarray*}
 then from (\ref{250}), $\ti{u}$ is the solution of (\ref{11}), (\ref{12}) with initial data $\ti{u}_0$.
 
 \noindent Case (i): Assume that there exists $t_0>0$ such that 
 $$R(t_0)=\g(t_0, A_1, p_-).$$
 From (4) of Lemma \ref{lemma25}, for $t>t_0$ it follows that $R_\pm(\g(t,A_1,p_-),t)=\g(t, A_1,p_-)$ and hence  $R(t)=\g(t,A_1,p_-)$.  From (\ref{252}), $\ti{u}(x,t)=u(x,t)\in [\B_2,\B_1]$ for $x<R(t)=\g(t,A_1,p_-)$ and $t>t_0$. Consequently  from (\ref{227}) for a.e. $x<\g(t,A_1,p_-)$, $t>t_0$, 
 \begin{eqnarray*}
 \frac{x-y_+(x,t)}{t}&=&f^\p(\ti{u}(x,t))\\
 &=&f^\p(u(x,t))\in [f^\p(\B_2),f^\p(\B_1)].
 \end{eqnarray*}
Letting $x\uparrow\g(t,A_1,p_-)$ to obtain 
$$f^\p(p_-)=\frac{\g(t,A_1,p_-)-A_1}{t}\in [f^\p(\B_2),f^\p(\B_1)].$$
\noindent Case (ii): Assume that for all $t>0$, 
$$R(\T)<\g(\T,A_1,p_-).$$
Suppose $f^\p(p_-)\notin [f^\p(\B_2),f^\p(\B_1)].$ Then choose an $\e>0,t_0>0$ such that for $t
\geq t_0,$
\be\h{288}
\left|\frac{R(t)-y(t)}{t}-f^\p(p_-)\right|<\e/2\\
\ [f^\p(p_-)-\e, f^\p(p_-)+\e]\cap [f^\p(\B_2),f^\p(\B_1)]=\phi\h{289}.
\ee
Then for $R(t_0)<x<\g(t_0,A_1,p_-), y_+(x,t_0)\leq A_1$ and 
\begin{eqnarray*}
\frac{R(t_0)-y(t_0)}{t_0}&\leq& \frac{x-y_+(x,t_0)}{t_0}+\frac{y_+(x,t_0)-y(t_0)}{t_0}\\
&\leq & \frac{x-y_+(x,t_0)}{t_0}+\frac{A_1-y(t_0)}{t_0}.
\end{eqnarray*}
Hence choose $t_0$ large such that for $R(t_0)<x<\g(t_o,A_1,p_-)$ with $y(x,t)=y_+(x,t)$ and 
\begin{eqnarray*}
\left|\frac{R(t_0)-y(t_0)}{t_0}-\frac{x-y(x,t_0)}{t_0}\right|<\e/2.
\end{eqnarray*}
Thus
\begin{eqnarray}\h{290}
\left|\frac{x-y(x,t_0)}{t_0}-f^\p(p_-)\right|<\e
\end{eqnarray}
and from (\ref{227}) for a.e. $x\in (R(t_0),\g(t_0,A_1,p_-))$
\begin{eqnarray*}
|f^\p(u(x,t_0))-f^\p(p_-)|<\e.
\end{eqnarray*}
Let 
\begin{eqnarray*}
\ [a,b]&=&(f^\p)^{-1}[f^\p(\B_2),f^\p(\B_1)]\\
\ [c(\e),d(\e)]&=&(f^\p)^{-1}[f^\p(p_-)-\e,f^\p(p_-)+\e].
\end{eqnarray*}
Then from (\ref{289}), $[a,b]\cap [c(\e),d(\e)]=\phi$. We have to consider two sub cases.

\noindent  (i): Suppose $f^\p(p_-)+\e<f^\p(\B_2).$ Then $d(\e)<a$ and by convexity  $(d(\e),a)$ contains open sets on which $f^\p$ is strictly increasing. Hence 
$$\min_{\substack{p\in[a,b]\\ q\in [c(0),d(0)]}} \frac{f(p)-f(q)}{p-q}> f^\p(p_-).$$ Therefore by continuity, there exists an $\e_0>0$ such that for $0<\e\leq \e_0$, 
$$m=\min_{\substack{p\in[a,b]\\ q\in [c(\e),d(\e)]}} \frac{f(p)-f(q)}{p-q}> f^\p(p_-)+\e.$$ Let $0<\e<\e_0$ and choose $t_0>0$ large such that  (\ref{288}), (\ref{289}), (\ref{290}) and the above inequality hold. 
By taking the initial data $\ti{u}(x,t_0)$ at $t=t_0$, then from (\ref{270}), there exists a Lipschitz curve $r(\cdot)$ for $t>t_0$, such that for $t>t_0$, 
\begin{eqnarray*}
r(t_0)&=& R(t_0),\\
\ti{u}(x,t)&\in& [a,b], \ \mbox{if}\ x<r(t),\\
\ti{u}(x,t)&\in& [c(\e),d(\e)], \ \mbox{if}\ x>r(t),\\
\frac{d r}{dt }(t)&\geq& m \geq f^\p(p_-)+\e.
\end{eqnarray*}
Hence for $t>t_0$, 
$$r(t)\geq R(t_0)+(t-t_0)(f^\p(p_-)+\e)=s(t)$$
and $s(t)$, $\g(t,A_1,p_-)$ intersect at $T$ given by 
\begin{eqnarray*}
R(t_0)+(T-t_0)(f^\p(p_-)+\e)&=&\g(T,A_1,p_-)\\
&=& \g(t_0, A_1,p_-) + (T-t_0)f^\p(p_-)\\
T&=& t_0+\frac{\g(t_0, A_1,p_-)-R(t_0)}{\e}>t_0.
\end{eqnarray*}
Thus the curve $r(\cdot), \g(\cdot, A_1,p_-)$ intersect at $\ti{T}\leq T.$ From (8) and (4) of Lemma \ref{lemma25}, $R(t)=r(t)$ for $t>t_0$. Hence $R(\ti{T})=r(\ti{T})=\g(\ti{T}, A_1,p_-)$. Therefore from case (i), $f^\p(p_-)\in [f^\p(\B_2), f^\p(\B_1)]$ which is a contradiction.

\noindent (ii): Suppose $f^\p(p_-)-\e>f^\p(\B_1),$ then $b<c(\e)$ and from convexity of $f$ as in case (i), we can choose $\e$ sufficiently small and $t_0$ large such that   
$$M=\max_{\substack{p\in[a,b]\\ q\in [c(0),d(0)]}} \frac{f(p)-f(q)}{p-q}\leq f^\p(p_-)-\e,$$
and for $t>t_0$, 
$$\frac{d r}{dt }(t)\leq M \leq f^\p(p_-)-\e.$$
Hence $$r(t)\leq R(t_0)+(t-t_0)(f^\p(p_-)-\e).$$
Since for $t>t_0$, $r(t)=R(t),$ we have 
\begin{eqnarray*}
f^\p(p_-)=\lim\limits_{t\rr \f}\frac{R(t)-y(t)}{t}&=&\lim\limits_{t\rr \f}\frac{r(t)-y(t)}{t}\\
&\leq & \lim\limits_{t\rr \f}\left[\frac{R(t_0)-y(t)}{t}+\left(1-\frac{t_0}{t}\right)(f^\p(p_-)-\e)\right]\\
&=&f^\p(p_-)-\e,
\end{eqnarray*}
which is a contradiction. This proves (\ref{280}).
\par Proof of $(a_2)$ follows in a similar way and this proves the lemma.
\end{itemize}
\end{proof}
\end{lemma}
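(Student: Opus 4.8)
The two assertions are proved by quite different mechanisms, both resting on the strict convexity of $f^*$ (part~(4) of Lemma~\ref{lemma41}) and the consequent non-intersection of distinct characteristic line segments, (\ref{221}); throughout I would use that a sub-segment of an optimal characteristic segment is again optimal. For part~1, write $\g(s)=\g(s,a,p)$ and suppose, for contradiction, that $b:=y_+(\g(s),s)>a$ for some $s>0$. Since $\g(\cdot,a,p)$ is a characteristic line we have $a\in ch(\g(t),t)$, so the straight segment from $(a,0)$ to $(\g(t),t)$ is optimal, and hence so is its restriction to $[s,t]$, which gives the dynamic programming identity $v(\g(t),t)=v(\g(s),s)+(t-s)f^*\!\left(\tfrac{\g(t)-\g(s)}{t-s}\right)$ with $\tfrac{\g(t)-\g(s)}{t-s}=f^\p(p)$. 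Writing $v(\g(s),s)=v_0(b)+s\,f^*\!\left(\tfrac{\g(s)-b}{s}\right)$ and noting that the slopes $\tfrac{\g(s)-b}{s}=f^\p(p)-\tfrac{b-a}{s}$ and $f^\p(p)$ are distinct, strict convexity of $f^*$ gives $s\,f^*\!\left(\tfrac{\g(s)-b}{s}\right)+(t-s)f^*\!\left(f^\p(p)\right)>t\,f^*\!\left(\tfrac{\g(t)-b}{t}\right)\ge v(\g(t),t)-v_0(b)$, whence $v(\g(t),t)>v(\g(t),t)$ --- a contradiction. The same argument with $y_-$ finishes part~1.

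For part~2 I would treat $(a_1)$, the case $(a_2)$ being its left--right mirror. Put $R(t)=R_-(t,A,u_0^1)$, $y(t)=y_+(R(t),t)$, and let $\g(\T,t)=y(t)+\tfrac{R(t)-y(t)}{t}\,\T$ be the optimal segment landing at $(R(t),t)$. First I would show that $t\mapsto y(t)$ is non-decreasing: for $t_1>t_2$ the segments $\g(\cdot,t_1)$ and $\g(\cdot,t_2)$ either coincide on $(0,t_2)$ or are disjoint there by (\ref{221}), and the bound $y(t)\ge A$ from (\ref{237}), read off at the waypoint $(\g(t_2,t_1),t_2)$, forces $\g(\cdot,t_1)\ge\g(\cdot,t_2)$, whence $y(t_1)\ge y(t_2)$. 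By hypothesis (\ref{277}) the limit $A_1:=\lim\limits_{t\rr\f}y(t)$ is then finite and $\ge A$, which is (\ref{278}). Since the slopes $\tfrac{R(t)-y(t)}{t}$ are uniformly bounded by (\ref{215}) and $t\mapsto\g(\T_0,t)$ is monotone for each fixed $\T_0$ by the same non-crossing property, $\lim\limits_{t\rr\f}\tfrac{R(t)-y(t)}{t}=:f^\p(p_-)$ exists --- this is (\ref{279}) --- and $\g(\T_0,t)\rr A_1+f^\p(p_-)\T_0$. Feeding $y(t)\in ch(\g(\T_0,t),\T_0)$ with $y(t)\rr A_1$ into the stability of characteristic sets (Lemma~\ref{lemma23}) gives $A_1\in ch(\g(\T_0,A_1,p_-),\T_0)$ for every $\T_0>0$, i.e.\ $\g(\cdot,A_1,p_-)$ is a characteristic line, which is (\ref{281}).

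The remaining and hardest point --- the one I expect to be the main obstacle --- is the slope confinement (\ref{280}), and this is where the front-tracking estimate of part~8 of Lemma~\ref{lemma25} is essential. Optimality of sub-segments first gives $R(\T)\le\g(\T,A_1,p_-)$ for all $\T>0$. If $R(t_0)=\g(t_0,A_1,p_-)$ for some $t_0$, then by part~4 of Lemma~\ref{lemma25} the $R_-$-curve remains glued to $\g(\cdot,A_1,p_-)$ for $t\ge t_0$; since $u_0^1\in[\B_2,\B_1]$ to the left of $A$, part~8 of Lemma~\ref{lemma25} gives $u_1(x,t)\in[\B_2,\B_1]$ for $x<R(t)$, and letting $x\uparrow\g(t,A_1,p_-)$ in $f^\p(u_1(x,t))=\tfrac{x-y_+(x,t)}{t}$ --- using part~1 ($y_\pm\equiv A_1$ along a characteristic line) --- yields $f^\p(p_-)=\tfrac{\g(t,A_1,p_-)-A_1}{t}\in[f^\p(\B_2),f^\p(\B_1)]$. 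If instead $R(t)<\g(t,A_1,p_-)$ for all $t$, I would suppose $f^\p(p_-)\notin[f^\p(\B_2),f^\p(\B_1)]$ and pick $\e>0$ small and $t_0$ large so that the slope of $\g(\cdot,t_0)$ near $t_0$ is within $\e/2$ of $f^\p(p_-)$ while $[f^\p(p_-)-\e,f^\p(p_-)+\e]\cap[f^\p(\B_2),f^\p(\B_1)]=\emptyset$. Convexity of $f$ then forces the Rankine--Hugoniot speed between $(f^\p)^{-1}[f^\p(\B_2),f^\p(\B_1)]$ and $(f^\p)^{-1}[f^\p(p_-)-\e,f^\p(p_-)+\e]$ to exceed $f^\p(p_-)+\e$ when $f^\p(p_-)+\e<f^\p(\B_2)$, and to stay below $f^\p(p_-)-\e$ when $f^\p(p_-)-\e>f^\p(\B_1)$. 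In the first case (\ref{270}) makes the curve $r(\cdot)=R(\cdot)$ issued at $(R(t_0),t_0)$ overtake $\g(\cdot,A_1,p_-)$ in finite time, reducing to the glued case above; in the second, (\ref{270}) caps the asymptotic slope of $R(\cdot)$ at $f^\p(p_-)-\e$, contradicting (\ref{279}). Either way $f^\p(p_-)\in[f^\p(\B_2),f^\p(\B_1)]$, proving (\ref{280}); the symmetric argument gives $(a_2)$ and completes the proof.
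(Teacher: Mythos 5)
Your proposal is correct and follows essentially the same route as the paper: part 1 via the dynamic-programming identity plus strict convexity of $f^*$, and part 2 via monotonicity of $t\mapsto y(t)$ from the non-crossing property (\ref{221}), stability of characteristic sets to get the limiting characteristic line, and the same two-case analysis (glued versus separated, with the Rankine--Hugoniot speed comparison from (\ref{270}) in the separated case) to confine $f^\p(p_-)$. The only difference is presentational --- you make explicit the sub-segment optimality behind the dynamic-programming step, which the paper uses implicitly.
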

Next we prove  a quantitative result regarding the single shock situation  by blow up analysis. 
\begin{lemma}\h{lemma27}
Let $A<B, u_\pm, \bar{u}_0,u_0$ as in $(\ref{16})$. Let $u$ be the solution of (\ref{11}), (\ref{12}). Also let $\al_1\leq \al_2$, $\B_1\leq \B_2$ such that 
\be\h{2900}
\left\{\begin{array}{lll}
u_-(x)\in [\B_2,\B_1] &\mbox{if}& x<A,\\
u_+(x)\in[\al_1,\al_2] &\mbox{if}& x>B.
 \end{array}\right.
\ee
Assume that $$f^\p(\B_2)>f^\p(\al_2).$$
Then there exist $(x_0,T_0)\in \R\times (0,\f), \g_0>0$ and Lipschitz curve $r(\cdot)$ depending only on 
$||u_0||_\f, f^\p(\B_2)-f^\p(\al_2)$ such that for $t>T_0$
\be
&&r(T_0)=x_0,\h{291}\\
&&T_0\leq \g_0|A-B|,\h{292}
\ee
\be\h{293}
\left\{\begin{array}{lll}
u(x,t)\in [\B_2,\B_1] &\mbox{if}& x<r(t),\\
u(x,t)\in[\al_1,\al_2] &\mbox{if}& x>r(t).
 \end{array}\right.
\ee
\begin{proof}
We prove this by blow up argument. Let
\be
(L(t),R(t))&=&(R_-(t,A,u_0), R_+(t,B,u_0)),\h{294}\\
(y_+(t),y_-(t))&=&(y_+(L(t),t),y_-(R(t),t))\h{295}.
\ee 
Clearly $L(t)\leq R(t)$. Suppose $L(t)<R(t)$ for all $t>0$. Then $A\leq y_+(t)\leq y_-(t)\leq B$ for all $t.$
Hence from (2) of Lemma \ref{lemma26}, there exist $p_-$ and $p_+$ such that 
\begin{eqnarray*}
f^\p(p_-)&=&\lim\limits_{t\rr \f}\frac{L(t)-y_+(t)}{t} \in [f^\p(\B_2),f^\p(\B_1)]\\
f^\p(p_+)&=&\lim\limits_{t\rr \f}\frac{R(t)-y_-(t)}{t} \in [f^\p(\al_1),f^\p(\al_2)]\\
\lim\limits_{t\rr \f}(y_+(t),y_-(t))&=&(A_1,B_1),
\end{eqnarray*}
and $\g(\cdot, A_1,p_-)$ and $\g(\cdot, B_1,p_+)$ are characteristic lines. 
Since $A_1\leq B_1$ and $f^\p(p_-)>f^\p(p_+)$, therefore $\g(\cdot, A_1,p_-)$ and $\g(\cdot, B_1,p_+)$ meet at $\ti{T}>0$ which contradicts the fact that they do not intersect. Hence let $L$ and $R$ meet at some $T_0>0$. Then from (4) of Lemma \ref{lemma25}, $L(t)=R(t)$ for $t>T_0$. Let $r(t)=L(t)$, then from (\ref{252}) and (\ref{253}), we have for $t>T_0$
\be\h{2944}
\left\{\begin{array}{lll}
u(x,t)\in [\B_2,\B_1] &\mbox{if}& x<r(t),\\
u(x,t)\in[\al_1,\al_2] &\mbox{if}& x>r(t).
 \end{array}\right.
\ee
We prove the uniform bounds on $T_0$ by assuming  $[A,B]=[0,1]$. Then  by blow up argument, we prove it for general $[A,B]$. Suppose not, then there exist $\bar{u}_{0,k}\in L^\f(0,1),$ $\al_{1,k}\leq \al_{2,k}, \B_{2,k}\leq \B_{1,k}$, $\delta>0$, $r_k(t),$ $T_k,$ convex $C^1$ functions $f_k$ and $f$ such that 
\begin{eqnarray*}
&&\sup\limits_k||u_{0,k}||_\f<\f, \lim\limits_{|p|\rr \f}\inf\limits_k\frac{f_k(p)}{|p|}= \f,  \ f_k\rr f\ \mbox{in}\ C^0_{\mbox{loc}}(\R),\\
&&\lim\limits_{k\rr \f}(\al_{1,k},\al_{2,k}, \B_{2,k},\B_{1,k}) =(\al_1,\al_2,\B_2,\B_1),\\
&&f^\p(\B_{2,k})-f^\p(\al_{2,k})\geq \delta
\end{eqnarray*}
and the solution $u_k$ of (\ref{11}), (\ref{12}) with flux $f_k$ and initial data $u_{0,k}$ satisfying (\ref{2944}) for $t>T_k$.
\par Let $(u_{-,k}, u_{+,k}, \bar{u}_{0,k})\rightharpoonup (u_-,u_+,\bar{u}_0)$ in $L^\f$ $\mbox{ weak }^*$ topology and 
\begin{eqnarray*}
u_0(x)=\left\{\begin{array}{lll}
u_-(x) &\mbox{if}& x<0,\\
\bar{u}_0(x) &\mbox{if}& x\in (0,1),\\
u_+(x) &\mbox{if}& x>1.
\end{array}\right.
\end{eqnarray*}
Then $u_-(x)\in [\B_2,\B_1]$ for $x<0$, $u_+(x)\in [\al_1,\al_2]$ if $x>1$. Let $u$ be the solution of (\ref{11})
with initial data $u_0$. Let $(L_k, R_k,y_{\pm,k})$ be as in (\ref{294}), (\ref{295}) for the solution $u_k$ and $T_k>0$ be the first point of interaction of $L_k$ and $R_k$. That is 
\begin{eqnarray*}
&&L_k(t)<R_k(t)\ \mbox{for}\ 0<t<T_k,\\
&&L_k(T_k)=R_k(T_k)=x_{0,k},\\
&&L_k(t)=R_k(t)=r_k(t)\ \mbox{for} \ t>T_k,\\
&&\lim\limits_{k\rr \f}T_k=\f.
\end{eqnarray*}
Hence $$0\leq y_{+,k}(T_k)\leq y_{-,k}(T_k)\leq 1.$$
Let \begin{eqnarray*}
\g_{1,k}(\T)&=& y_{+,k}(T_k)+\left(\frac{x_{0,k}-y_{+,k}(T_k)}{T_k}\right)\T,\\
\g_{2,k}(\T)&=& y_{-,k}(T_k)+\left(\frac{x_{0,k}-y_{-,k}(T_k)}{T_k}\right)\T,
\end{eqnarray*}
be the characteristic line segments. From (\ref{215}), $\left\{\frac{x_{0,k}-y_{\pm,k}(T_k)}{T_k}\right\}$, $\left\{\frac{dR_k}{dt}\right\}$, $\left\{\frac{dL_k}{dt}\right\}$ are uniformly bounded sequences, hence for  subsequences as $k\rr \f$, let 
\begin{eqnarray*}
(R_k,L_k)\rr (R_0,L_0)\ \mbox{in}\ C^0_{\mbox{loc}}(\R),\\
(y_{+,k}(T_k), y_{-,k}(T_k))\rr (y_+,y_-),\\
\frac{x_{0,k}-y_{\pm,k}(T_k)}{T_k} \rr f^\p(p_\pm).
\end{eqnarray*}
Let 
\begin{eqnarray*}
(\g_1(\T), \g_2(\T))&=&\lim\limits_{k\rr \f}(\g_{1,k}(\T), \g_{2,k}(\T))\\
&=&(y_++f^\p(p_-)\T, y_-+f^\p(p_+)\T).
\end{eqnarray*}
Therefore  from (7) of Lemma \ref{lemma23}, $y_+\in ch(\g_1(\T),\T), y_-\in ch(\g_2(\T),\T)$ for all $\T>0$. Hence $\g_1$ and $\g_2$ are characteristic lines with respect to the solution $u$.
\par Imitating the proof of (\ref{280}) and (\ref{285}), it follows that  $f^\p(p_-)\in [f^\p(\B_2), f^\p(\B_1)]$ and $f^\p(p_+)\in [f^\p(\al_1), f^\p(\al_2)].$
Since $f^\p(\B_2)>f^\p(\al_2)$, therefore $\g_1$ and $\g_2$ must necessarily intersect, contradicting that $\g_1$ and $\g_2$ are characteristic lines. This proves (\ref{291}) and (\ref{292}) for $[0,1]$. 
\par For general $[A,B]$, define 
$$w(x,t)=u(A+x(B-A), (B-A)t),$$
then from the previous analysis, there exist a $(\ti{x}_0, \ti{T}_0)\in\R\times(0,\f), \g_0>0$, $\ti{r}(\cdot)$ a Lipschitz curve such that (\ref{291}) to (\ref{293}) hold for $w$. Let $x_0=A+\ti{x}_0(B-A),$ $T_0=(B-A)\ti{T}_0$, $r(t)=\ti{r}\left(\frac{t}{B-a}\right)$, then (\ref{291}) to (\ref{293}) holds for $u$. This proves the Lemma.
\end{proof}
\end{lemma}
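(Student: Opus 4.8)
The plan is twofold: first, show that unless the two extreme forward fronts anchored at $A$ and $B$ eventually merge, one extracts a pair of characteristic lines whose slopes are forced apart by $f^\p(\B_2)>f^\p(\al_2)$, which is impossible; second, recover the quantitative bound on the meeting time by rescaling to $[A,B]=[0,1]$ and a blow--up/compactness argument. Concretely, set $M=||u_0||_\f$ and track $L(t):=R_-(t,A,u_0)$ and $R(t):=R_+(t,B,u_0)$ from the definition before Lemma~\ref{lemma25}. Part~(8) of Lemma~\ref{lemma25}, used once with anchor $A$ (ranges $[\B_2,\B_1]$ for $x<A$, $[-M,M]$ for $x>A$) and once with anchor $B$ (ranges $[-M,M]$ for $x<B$, $[\al_1,\al_2]$ for $x>B$), gives $u(x,t)\in[\B_2,\B_1]$ for $x<L(t)$ and $u(x,t)\in[\al_1,\al_2]$ for $x>R(t)$, while $(\ref{237})$ and the monotonicity $(\ref{220})$ of $y_\pm$ give $A\le y_+(L(t),t)$ and $y_-(R(t),t)\le B$; and whenever $L(t)<R(t)$, the definitions of $R_\pm$ also give $y_+(L(t),t)\le B$ and $y_-(R(t),t)\ge A$.

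Hence, if $L(t)<R(t)$ for every $t>0$, the boundedness hypotheses $(\ref{277})$ and $(\ref{282})$ of Lemma~\ref{lemma26}(2) are met (with $u_0$ itself in the role of $u_0^1$, and $u_0$ with $B$ in place of $A$ in the role of $u_0^2$), producing characteristic lines $\g(\cdot,A_1,p_-)$ and $\g(\cdot,B_1,p_+)$ with $A\le A_1\le B_1\le B$ — the ordering coming from non-crossing of characteristic segments $(\ref{221})$, as $f^*$ is strictly convex (Lemma~\ref{lemma41})~ — and with $f^\p(p_-)\in[f^\p(\B_2),f^\p(\B_1)]$, $f^\p(p_+)\in[f^\p(\al_1),f^\p(\al_2)]$. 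Then $f^\p(p_-)\ge f^\p(\B_2)>f^\p(\al_2)\ge f^\p(p_+)$, so the faster line $\g(\cdot,A_1,p_-)$, starting no further right than $\g(\cdot,B_1,p_+)$, overtakes it at some positive time, contradicting $(\ref{221})$. Therefore $L$ and $R$ coincide from a first time $T_0>0$ on; with $x_0:=L(T_0)=R(T_0)$ and $r(t):=L(t)=R(t)$ for $t\ge T_0$ — consistent by $(\ref{240})$--$(\ref{241})$ — the profile $u(\cdot,T_0)$ has the two--sided structure $[\B_2,\B_1]$/$[\al_1,\al_2]$ with $\B_2>\al_2$ (since $f$ is convex and $f^\p(\B_2)>f^\p(\al_2)$), so the last part of Lemma~\ref{lemma25}(8), in particular $(\ref{253})$, yields $(\ref{293})$ together with a Lipschitz bound on $r$ depending only on $M$ and on $f$ restricted to $[-M,M]$.

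For $(\ref{292})$, first rescale: $w(x,t):=u\!\left(A+x(B-A),(B-A)t\right)$ solves $(\ref{11})$ with data $u_0(A+x(B-A))$, which has the same structure on $[0,1]$ with the same ranges and the same sup--bound, and a triple $(\tilde x_0,\tilde T_0,\tilde r)$ for $w$ converts into $\left(A+\tilde x_0(B-A),(B-A)\tilde T_0,r\right)$ for $u$; so it suffices to bound $T_0\le\g_0$ when $[A,B]=[0,1]$, with $\g_0$ depending only on $M$ and $\de:=f^\p(\B_2)-f^\p(\al_2)>0$. If this failed there would be $\bar u_{0,k}\in L^\f(0,1)$, $C^1$ convex fluxes $f_k$ with $\lim_{|p|\rr\f}\inf_k f_k(p)/|p|=\f$, ranges with $f_k^\p(\B_{2,k})-f_k^\p(\al_{2,k})\ge\de$, and $T_k\rr\f$. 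Passing to a subsequence, $u_{0,k}\rightharpoonup u_0$ weak-$*$ and $f_k\rr f$ in $C^0_{\mbox{loc}}$ (hence $f_k^\p\rr f^\p$ locally uniformly, $f$ being $C^1$ convex). At the meeting time $T_k$ one has $0\le y_{\pm,k}(T_k)\le1$, and by the uniform Lipschitz bounds $(\ref{235})$ the characteristic segments from $(x_{0,k},T_k)$ to $(y_{+,k}(T_k),0)$ and to $(y_{-,k}(T_k),0)$ have bounded slopes; extract $(y_{+,k}(T_k),y_{-,k}(T_k))\rr(y_+,y_-)$ and slopes $\rr f^\p(p_\pm)$. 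By Lemma~\ref{lemma23}(7) the rays $\g_1(\T)=y_++f^\p(p_-)\T$ and $\g_2(\T)=y_-+f^\p(p_+)\T$ are characteristic lines for the limit solution $u$; imitating the argument for $(\ref{280})$ and $(\ref{285})$ inside Lemma~\ref{lemma26} gives $f^\p(p_-)\in[f^\p(\B_2),f^\p(\B_1)]$ and $f^\p(p_+)\in[f^\p(\al_1),f^\p(\al_2)]$; together with $y_+\le y_-$ (non-crossing $(\ref{221})$ of the segments before $T_k$) and $f^\p(\B_2)-f^\p(\al_2)\ge\de>0$ surviving the limit, $\g_1$ overtakes $\g_2$, contradicting that characteristic lines do not intersect. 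This forces the uniform bound, hence $(\ref{292})$.

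The step I expect to be the main obstacle is this blow--up argument: one must pass simultaneously to the weak-$*$ limit of the data, the $C^1_{\mbox{loc}}$ limit of the fluxes and the $C^0_{\mbox{loc}}$ limit of the fronts, and then verify that the limiting base points stay in $[0,1]$ and --- crucially --- that the limiting slopes land in $[f^\p(\B_2),f^\p(\B_1)]$ and $[f^\p(\al_1),f^\p(\al_2)]$; this last ``imitation of $(\ref{280})$'' is where the stability parts of Lemmas~\ref{lemma23} and~\ref{lemma25} and the no--crossing property $(\ref{221})$ really enter. By comparison, the qualitative emergence of a single shock at some finite $T_0$ is an almost immediate consequence of Lemma~\ref{lemma26} together with $(\ref{221})$.
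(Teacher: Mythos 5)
Your proposal is correct and follows essentially the same route as the paper: the qualitative merging of $L(t)=R_-(t,A,u_0)$ and $R(t)=R_+(t,B,u_0)$ via Lemma \ref{lemma26}(2) and the non-intersection of the resulting characteristic lines with $f^\p(p_-)>f^\p(p_+)$, followed by the same rescaling to $[0,1]$ and blow-up/compactness argument (weak-$*$ limits of data, $C^0_{\mbox{loc}}$ limits of fluxes, Lemma \ref{lemma23}(7), and imitation of (\ref{280})--(\ref{285})) for the uniform bound on $T_0$. The only difference is cosmetic ordering (you rescale before the blow-up, the paper after), so nothing further to add.
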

\begin{theorem}[Structure Theorem]\label{theorem28}
Let $f\in C^1(\R)$ and convex with finite number of degeneracies. Assume that $\al_1\leq \al_2$, $\B_2\leq \B_1$, $u_\pm\in L^\f(\R)$, $\bar{u}_0\in L^\f(\R),$ $A<B$ such that 
\be\h{296}
\left\{\begin{array}{lll}
u_-(x)\in [\B_2,\B_1] &\mbox{if}& x<A,\\
u_+(x)\in [\al_1,\al_2] &\mbox{if}& x>B, 
\end{array}\right.
\ee 
\be\h{297}
u_0(x)=\left\{\begin{array}{lll}
u_-(x) &\mbox{if}& x<A,\\
\bar{u}_0(x) &\mbox{if}& x\in(A,B),\\
u_+(x) &\mbox{if}& x>B 
\end{array}\right.
\ee 
 and let $u$ be the solution of (\ref{11}), (\ref{12}). Then 
 \begin{itemize}
 \item [1.] Shock case: $\{u_-,u_+\}$ gives rise to a single shock solution if 
 \be\h{298}
 f^\p(\B_2)>f^\p(\al_2),
 \ee
 and $(x_0,T_0)$, $\g_0, r(\cdot)$ exist as in (\ref{17}) to (\ref{110}) which depends on $f^\p(\B_2)-f^\p(\al_2)$, $||u_0||_\f$, $Lip (f,[-||u_0||_\f, ||u_0||_\f])$ and uniform growth of $\frac{f^*(p)}{|p|}$ as $|p|\rr \f.$
 \item [2.] Rarefaction case: Assume that 
 \be\h{299}
 f^\p(\B_1)\leq f^\p(\al_2),
 \ee
 then there exist $A\leq A_1 \leq B_1 \leq B,\ p_\pm\in \R$ with  $f^\p(p_-)\in [f^\p(\B_2),f^\p(\B_1)]$, $f^\p(p_+)\in [f^\p(\al_1), f^\p(\al_2)]$ and a countable number of ASSP $D_i=D(a_i,b_i,p_i)$ such that 
 \begin{itemize}
 \item [i.] $\g_-(t)=A_1+tf^\p(p_-), \g_+(t)=B_1+tf^\p(p_+)$ are characteristic lines. 
 \item [ii.] $D(a_i,b_i,p_i)\subset\{(x,t)\in\R\times(0,\f): \g_-(t)<x<\g_+(t)\}$.
 \item [iii.] Define $F_\pm, D_\pm, R$ by 
 \begin{eqnarray*}
 F_-&=&\{(x,t): x<R_-(t,A,u_0)\},\\
  F_+&=&\{(x,t): x>R_+(t,B,u_0)\},\\
 D_-&=&\{(x,t): R_-(t,A,u_0)<x<\g_-(t)\},\\
 D_+&=&\{(x,t): \g_+(t)<x <R_+(t,B,u_0)\},\\
R&=&\{(x,t)\notin D_-\cup D_-\cup_i D(a_i,b_i,p_i): \g_-(t)<x<\g_+(t)\},
 \end{eqnarray*}
 then 
 \begin{eqnarray*}
 f^\p(u(x,t))\in [f^\p(\B_2), f^\p(\B_1)] &\mbox{if}& (x,t)\in F_-\\
 f^\p(u(x,t))\in [f^\p(\al_1), f^\p(\al_2)] &\mbox{if}& (x,t)\in F_+
 \end{eqnarray*}
 \item [iv.] $(x,t)\in R$, then $(x,t)$ lies on a characteristic line.
 \item [v.] If for some $\eta>0$, $u_0$ is continuous in $[a,a+\eta)$ and $(b-\eta,b]$, then 
 $$u_0(a_i)=u_0(b_i)=p_i.$$
 \item [vi.] If $u_0$ is monotone in $[a,a+\eta)$  and $(b-\eta,b]$, then $u_0$ is increasing in $[a,a+\eta)$ and $(b-\eta,b]$.
 \item [vii.] $\displaystyle\frac{1}{b_i-a_i}\int\limits_{a_i}^{b_i} u_0(x)dx=p_i.$
 \end{itemize}
 \end{itemize}
 \begin{proof}
 Let $L(t)=R_-(t,A,u_0), R(t)=R_+(t,B,u_0), y_+(t)=y_+(R_-(t,A,u_0),t)$, $y_-(t)=y_-(R_+(t,B,u_0),t).$
 \begin{itemize}
 \item [1.] Shock case follows from Lemma \ref{lemma27}.
 \item [2.] Assume (\ref{299}). 
 
\noindent If for all  $t>0$, $L(t)<R(t)$, then $A\leq y_+(t)\leq y_-(t)\leq B$, hence existence of $A_1, B_1, p_-, p_+$ follows from Lemma \ref{lemma26}. Assume that there exists $T>0$ such that $L(T)=R(T)$. Then from (4) of \ref{lemma25}, $L(t)=R(t)$ for all $t\geq T$. From (\ref{252}), for $t>T$,
 \begin{eqnarray*}
 u(x,t)\in\begin{cases}
  [\B_2,\B_1] \mbox{ if } x<L(t),\\
  [\al_1,\al_2] \mbox{ if } x>L(t).
 \end{cases}
 \end{eqnarray*}
 Let $x\uparrow L(t), \xi\downarrow R(t)$, then $y_+(x,T,t)\rr y_-(L(t),T,t)$ and $y_-(\xi, T,t)\rr y_+(R(t), T,t)$.
 
 \noindent Claim: $y_\pm(L(t),T,t)=L(T)$ and $L(t)$ is a line segment for $t>T$.
 
 \par Suppose not, say $y_-(L(t),T,t)<L(T)\leq y_+(L(t), T,t)$, then 
 \begin{eqnarray}\h{2100}
 \begin{array}{lll}
\displaystyle \frac{L(t)-y_-(L(t),T,t)}{t-T}&=&\displaystyle \lim\limits_{x\uparrow L(t)} \frac{x-y_+(x,T,t)}{t-T}\\
 &=&\displaystyle \lim\limits_{x\uparrow L(t)} f^\p(u(x,t))\in [f^\p(\B_2),f^\p(\B_1)],\end{array}
 \end{eqnarray}
  \begin{eqnarray}\h{2101}
  \begin{array}{lll}
 \displaystyle \frac{L(t)-y_+(L(t),T,t)}{t-T}&=&\displaystyle \lim\limits_{\xi\downarrow L(t)} \frac{\xi-y_-(\xi,T,t)}{t-T}\\
 &=&\displaystyle \lim\limits_{\xi\downarrow L(t)} f^\p(u(\xi,t))\in [f^\p(\al_1),f^\p(\al_2)].\end{array}
 \end{eqnarray}
 But $$\frac{L(t)-y_-(L(t),T,t)}{t-T}>\frac{L(t)-y_+(L(t),T,t)}{t-T},$$ which contradicts (\ref{299}).
 Hence $\g(\T)=L(T)+\left(\frac{L(t)-L(T)}{t-T}\right)(\T-T)$ is the characteristic line segment joining $(L(t),t)$ and $(L(T),T)$. Thus $L(\T)\leq \g(\T)\leq R(\T)$ for $\T\in [T,t]$. Since $L(\T)=R(\T)$, for $\T>T$, which implies that $L(\T)=\g(\T)$. Since $t$ is arbitrary, this implies that $L(t)$ is a straight line from $(L(T),T)$.
 \par Let $x_0=L(T)=R(T)$ and $\g_\pm(\T)=y_\pm(x_0,T)+\left(\frac{x_0-y_\pm(x_0)}{T}\right)\T$ the characteristic line segments at $(x_0,T)$. Then  by the strict convexity of $f^*$, the curves 
 \begin{eqnarray*}
 \g_1(t)=\left\{\begin{array}{lll}
 \g_+(t) &\mbox{if}& 0<t<T,\\
 L(t) &\mbox{if}& t>T,\end{array}\right.
 \end{eqnarray*}
  \begin{eqnarray*}
 \g_2(t)=\left\{\begin{array}{lll}
 \g_-(t) &\mbox{if}& 0<t<T,\\
 L(t) &\mbox{if}& t>T,\end{array}\right.
 \end{eqnarray*}
 are characteristic lines and hence  $\g_1=\g_2=L=R.$ Therefore $A_1=B_1=y_\pm(x_0,T)$ and $f^\p(p_-)=f^\p(p_+)$$=f^\p(\B_2)=f^\p(\al_1)$. Hence either $L(t)<R(t)$ for all $t>0$ or $L(t)=R(t)$ for all $t>0$ and is a characteristic line. (ii), (iii) and (iv) follow exactly as in \cite{1}.
 
Observe that (v), (vi) and (vii) give information about $u_0$ in an ASSP. Since $f$ is not strictly convex,  the  proof does not follow  from \cite{1} and it is quite delicate, therefore we adopt a different procedure. In order to do this, we concentrated only on  $D(a_i,b_i,p_i)$ and by approximation procedure we prove (v), (vi) and (vii).  For the sake of simplicity denote an ASSP $D(a_i,b_i,p_i)$ by $D(a,b,p)$. Define 
\begin{eqnarray*}
\ti{u}_0(x)=\left\{\begin{array}{llll}
u_0(x) &\mbox{if}& x\in (a,b),\\
p &\mbox{if}& x\notin(a,b).
\end{array}\right.
\end{eqnarray*}
\begin{eqnarray*}
\ti{u}(x,t)=\left\{\begin{array}{llll}
u(x,t) &\mbox{if}& (x,t)\in D(a,b,p),\\
p &\mbox{if}& (x,t)\notin D(a,b,p).
\end{array}\right.
\end{eqnarray*}
Then from (\ref{249}) and (\ref{251}) $\ti{u}$ is the solution of (\ref{11}) with initial data $\ti{u}_0$. Moreover $\g(\cdot, a, p)$ and $\g(\cdot, b, p)$ are the characteristic lines and $D(a,b,p)$ does not contain any other characteristic line. 
\par Let $f_\e\in C^2(\R)$ be a uniformly convex flux converging to $f$ in $C^1_{\mbox{loc}}(\R)$ (see (\ref{42}), (\ref{44}) and (\ref{410})) with $\lim\limits_{|p|\rr\f}\inf\limits_{\e}\frac{f_\e(p)}{|p|}=\f$. Then $f_\e^*\rr f^*$ in  $C^0_{\mbox{loc}}(\R)$. Let $u_\e$ be the solution of (\ref{11}) with flux $f_\e$ and initial data $\ti{u}_0$. Let $L_\e(t)=R_-(t,a,f_\e)$ and $R_\e(t)=R_+(t,b,f_\e)$ be the extreme characteristic curves. Let $y_\e(t)=y_+(L_\e(t),t)$, $Y_\e(t)=y_-(R_\e(t),t)$. Then from the structure Theorem \cite{1}, there exist $a\le a_\e \leq b_\e\leq b$ such that for $i=1,2$, $\g_{1,\e}, \g_{2,\e}$ are characteristic lines where 
\begin{eqnarray*}
\lim\limits_{t\rr\f}(y_\e(t), Y_\e(t))&=&(a_\e,b_\e)\\
\g_{1,\e}(t)&=&\g(t,a_\e, p)=a_\e+tf^\p_\e(p)\\
\g_{2,\e}(t)&=& \g(t,b_\e,p)=b_\e+tf^\p_\e(p).
\end{eqnarray*}
Let for a subsequence $\{a_\e,b_\e\}\rr \{\ti{a}, \ti{b}\}$ as $\e\rr 0$. Since the limits of characteristics are characteristics, we obtain $\g_1$ and $\g_2$ are characteristics lines corresponding to $\ti{u}$ where $$(\g_1(t),\g_2(t))=\lim\limits_{\e\rr 0}(\g_{1,\e}(t),\g_{2,\e}(t))=(\ti{a}+tf^\p(p), \ti{b}+tf^\p(p)).$$
\noindent Claim 1: For a subsequence 
$$\lim\limits_{\e\rr0}(L_\e(t), R_\e(t))=(\g(t,a,p), \g(t,b,p)).$$
Let for a subsequence $\lim\limits_{\e\rr 0}(L_\e(t), R_\e(t))=(L(t), R(t))$ in  $C^0_{\mbox{loc}}(\R)$. Since 
$$y_-(L_\e(t),t)\leq a \leq y_+(L_\e(t),t),$$
therefore by letting $\e\rr \f$ to obtain 
$$y_-(L(t),t)\leq a \leq y_+(L(t),t),$$
and then
$$\g(t, a, p) =R_-(t,a,f)\leq L(t)\leq R_+(t,a,f)=\g(t,a,p),$$ because  $\g(t,a,p)$ is a characteristic line. Hence $L(t)=\g(t,a,p)$ and similarly $R(t)=\g(t,b,p)$ and this proves the claim 1. \\
\noindent Claim 2: Let for a subsequence $\lim\limits_{\e\rr 0} (a_\e, b_\e)=(\ti{a}, \ti{b}),$ then $\ti{a}=a, \ti{b}=b.$\\
\par If not,   then let $0<\ti{a}\leq b$. If $\ti{a}<b$, then  $\g_1(t)=\lim\limits_{\e\rr 0}\g_{1,\e}(t)=\ti{a}+tf^\p(t)$ is a characteristic line with  respect to $\ti{u}$, which is a contradiction since $\g_1(t)\in D(a,b,p)$ is an ASSP. 
\par Hence $\ti{a}=b$. Let $\e_0>0$ such that for $0<\e<\e_0$, $a_\e>a+\frac{3}{4}(b-a).$ Then $s_\e(t)=R_-(t,\frac{a+b}{2}, f_\e)$ meet $L_\e$ at $t=t_\e$ and 
\be\h{nn}
y_-(s_\e(t_\e),t_\e)\leq \frac{a+b}{2}\leq y_+(s_\e(t_\e),t_\e).
\ee
Suppose $\lim\limits_{\e\rr 0} t_\e=\f$, then choose $\ti{t}_\e<t_\e$ such that $\lim\limits_{\e\rr 0} y_+(L(\ti{t}_\e), \ti{t}_\e)=b$. Hence the characteristic line segments $L(\ti{t}_\e)+\frac{y_+(L(\ti{t}_\e), \ti{t}_\e)-L(\ti{t}_\e)}{\ti{t}_\e}(t-\ti{t}_\e)$ and $L(t_\e)+\frac{y_-(L(t_\e),t_\e)-L(t_\e)}{t_\e}(t-t_\e)$ meet for some $0<t_0<\min(\ti{t}_\e,t_\e)$ which contradicts the non intersection of characteristics lines. Hence $\{t_\e\}$ is bounded. Let $t_1=\lim\limits_{\e\rr 0} t_\e$ and $s(t)=\lim\limits_{\e\\r 0} s_\e(t)$ for $0\leq t \leq t_1$. Then from (\ref{nn}), we have for $0\leq t \leq t_1$ and from claim 1
\begin{eqnarray*}
&&R_-\left(t,\frac{a+b}{2}, f\right)\leq s(t) \leq R_+\left(t,\frac{a+b}{2}, f\right),\\
&&s(t_1)=\g_1(t_1, a, p).
\end{eqnarray*}
This implies that $R_-(t,\frac{a+b}{2},f)$ meet $\g(t,a, p)$ which contradicts that $D(a,b,p)$ is an ASSP, which proves claim 2. Let 
$$E_\e(t_0)=\{(x,t): a_\e+tf^\p_\e(p)<x<b_\e+tf^\p_\e(p), 0<t<t_0\},$$
then integrating the equation (\ref{11}) with flux $f_\e$ in $E_\e(t_0)$ to obtain 
\begin{eqnarray*}
\int\limits_{a_\e}^{b_\e}u_0(x)dx &=& \int\limits_{a_\e+t_0f_\e^\p(p)}^{b_\e+t_0f_\e^\p(p)}u_\e(x,t_0)dx\\
&=&  \int\limits_{a_\e+t_0f_\e^\p(p)}^{b_\e+t_0f_\e^\p(p)}(f^\p_\e)^{-1}\left(\frac{x-y_+(x,t_0,f_\e)}{t_0}\right)\\
&=&  \int\limits_{a_\e}^{b_\e}(f^\p_\e)^{-1}\left(f^\p_\e(p)+\frac{\xi-y_+(\xi+t_0f^\p(p),t_0,f_\e)}{t_0}\right),
\end{eqnarray*}
letting $t_0\rr \f$ to obtain 
$$\int\limits_{a_\e}^{b_\e}u_0(x)dx=p(b_\e-a_\e).$$ Now letting $\e\rr 0$ and from claim 2, we obtain 
$$\frac{1}{b-a}\int\limits_{a}^{b}u_0(x)dx=p.$$
This proves (vii). If $u_0$ is continuous in $[a,a+\eta)$ and $(b-\eta,b]$ for some $\eta>0$, then for $\e$ small and $\eta$ small, $u_0$ is continuous in $[a_\e,a_\e+\eta), (b_\e-\eta,b_\e]$ and hence from structure Theorem \cite{1}, we have $u_0(a_\e)=u_0(b_\e)=p$. Now letting $\e\rr 0$ to obtain $u_0(a)=u_0(b)=p$ and this proves (v). Furthermore if  $u_0$ is monotone in $[a,a+\eta), (b-\eta,b]$ then  for $\e$ small, $u_0$ is monotone in  $[a_\e,a_\e+\eta), (b_\e-\eta,b_\e]$. Hence letting $\e\rr 0$ to obtain that $u_0$ is increasing in  $[a,a+\eta), (b-\eta,b]$ for some $\eta>0$. This proves (v).  Hence the Theorem.
  \end{itemize}
 \end{proof}
\end{theorem}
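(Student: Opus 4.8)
The plan is to separate the two hypotheses, which produce genuinely different pictures, and in each case to reduce to machinery already in place. Throughout put $L(t)=R_-(t,A,u_0)$, $R(t)=R_+(t,B,u_0)$, $y_+(t)=y_+(L(t),t)$, $y_-(t)=y_-(R(t),t)$, so that $L(t)\le R(t)$ by (\ref{236}).

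\emph{Part 1 (shock case).} Under (\ref{298}) there is essentially nothing left to do: Lemma~\ref{lemma27} already produces $(x_0,T_0)$, $\g_0>0$ and a Lipschitz curve $r(\cdot)$ with (\ref{291})--(\ref{293}), depending only on $\|u_0\|_\f$ and $f^\p(\B_2)-f^\p(\al_2)$. I would trace the constants through the blow-up argument of Lemma~\ref{lemma27} to exhibit the remaining dependence on $Lip(f,[-\|u_0\|_\f,\|u_0\|_\f])$ and on the uniform growth of $f^*(p)/|p|$, and then read off that $(u_-,u_+)$ gives a single shock solution in the sense of Definition~\ref{definition11}, with $r=L=R$ for $t>T_0$.

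\emph{Part 2 (rarefaction case).} I would argue by dichotomy: either $L(t)<R(t)$ for all $t>0$, or $L(T)=R(T)$ for some $T>0$. In the first case $A\le y_+(t)\le y_-(t)\le B$ for all $t$, by monotonicity of $y_\pm$ and the comparison principle, so Lemma~\ref{lemma26} delivers $A_1,B_1\in[A,B]$ and $p_\pm$ with $f^\p(p_-)\in[f^\p(\B_2),f^\p(\B_1)]$, $f^\p(p_+)\in[f^\p(\al_1),f^\p(\al_2)]$, and characteristic lines $\g_\pm$. In the second case, $L\equiv R$ on $[T,\f)$ by (4) of Lemma~\ref{lemma25}, and (\ref{252}) pins $u$ into $[\B_2,\B_1]$ to the left of $L(t)$ and into $[\al_1,\al_2]$ to the right of it. The key point is then that $y_\pm(L(t),T,t)=L(T)$, i.e.\ $L(\cdot)$ is a line segment beyond $T$: otherwise the one-sided limits of $f^\p(u(\cdot,t))$ across $L(t)$, read off from (\ref{227}) and (\ref{222})--(\ref{223}), would be a left slope in $[f^\p(\B_2),f^\p(\B_1)]$ strictly larger than a right slope in $[f^\p(\al_1),f^\p(\al_2)]$, which is incompatible with (\ref{299}). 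Gluing this forward segment to the two backward segments at $(x_0,T)=(L(T),T)$ and using strict convexity of $f^*$ (item~(4) of Lemma~\ref{lemma41}) then forces $A_1=B_1$ and $f^\p(p_-)=f^\p(p_+)=f^\p(\B_2)=f^\p(\al_1)$—the degenerate linear regime, in which $L\equiv R$ is itself a characteristic line. Items (ii)--(iv)—the location of the ASSPs between $\g_\pm$, the prescribed ranges of $f^\p(u)$ on $F_\pm$, and the fact that points of $R$ lie on characteristic lines—go through as in \cite{1}.

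\emph{The main obstacle: (v)--(vii).} Since $f$ is not strictly convex, the behaviour of $u_0$ inside an ASSP $D(a,b,p)$ cannot be quoted from \cite{1}, and I expect this to be the delicate part. My plan is to localise and approximate. Replace $u_0$ by $\ti u_0=u_0\mathbf 1_{(a,b)}+p\,\mathbf 1_{(a,b)^c}$: by (\ref{249})--(\ref{250}) the corresponding solution $\ti u$ agrees with $u$ on $D(a,b,p)$ and is bounded by $\g(\cdot,a,p),\g(\cdot,b,p)$. Take uniformly convex $f_\e\rr f$ in $C^1_{\mbox{loc}}$ with $\lim_{|p|\rr\f}\inf_\e f_\e(p)/|p|=\f$; then $f_\e^*\rr f^*$ and $u_\e\rr\ti u$ by Lemma~\ref{lemma21}. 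For each $\e$ the strictly convex structure theorem of \cite{1} produces $a\le a_\e\le b_\e\le b$ and characteristic lines $\g(\cdot,a_\e,p),\g(\cdot,b_\e,p)$ bounding the corresponding ASSP. I would then prove two stability claims. First, $R_-(t,a,f_\e)\rr\g(t,a,p)$ and $R_+(t,b,f_\e)\rr\g(t,b,p)$: pass $y_-(L_\e(t),t)\le a\le y_+(L_\e(t),t)$ to the limit via (6) of Lemma~\ref{lemma23}, then use that $\g(\cdot,a,p)$ is a characteristic line, so $R_-(t,a,f)=R_+(t,a,f)=\g(t,a,p)$. Second, $a_\e\rr a$ and $b_\e\rr b$: otherwise a subsequential limit of $\g(\cdot,a_\e,p)$ would be a characteristic line strictly inside the ASSP $D(a,b,p)$, a contradiction—the borderline subcase $\ti a=b$ being excluded by tracking $s_\e(t)=R_-(t,\tfrac{a+b}{2},f_\e)$: its interaction time $t_\e$ with $L_\e$ must stay bounded (an unbounded $t_\e$ manufactures two intersecting characteristic segments), so $s_\e$ converges to a curve meeting $\g(\cdot,a,p)$, contradicting the ASSP property. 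Granting both claims, I would integrate (\ref{11}) for the flux $f_\e$ over the strip $E_\e(t_0)=\{\,a_\e+tf_\e^\p(p)<x<b_\e+tf_\e^\p(p),\ 0<t<t_0\,\}$, substitute the Lax--Oleinik formula of Theorem~\ref{theorem24} on the top edge, let $t_0\rr\f$ to obtain $\int_{a_\e}^{b_\e}u_0=p\,(b_\e-a_\e)$, and finally $\e\rr0$, which gives (vii). For (v) and (vi), apply the \cite{1} structure theorem on $[a_\e,a_\e+\eta)$ and $(b_\e-\eta,b_\e]$, where $u_0$ inherits continuity, resp.\ monotonicity, for $\e$ and $\eta$ small, and pass to the limit using the $C^1_{\mbox{loc}}$ convergence of $f_\e$.
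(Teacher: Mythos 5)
Your proposal is correct and follows essentially the same route as the paper's own proof: the shock case via Lemma \ref{lemma27}, the dichotomy $L<R$ versus $L(T)=R(T)$ with the line-segment claim and the slope contradiction against (\ref{299}), and for (v)--(vii) the localisation to a single ASSP, uniformly convex approximation $f_\e$, the two stability claims (including the borderline subcase handled by $s_\e(t)=R_-(t,\frac{a+b}{2},f_\e)$ and the boundedness of $t_\e$), and the integration of the conservation law over the strip $E_\e(t_0)$ followed by $t_0\rr\f$ and $\e\rr 0$. No substantive differences from the argument given in the paper.
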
 
 
 \begin{remark}
 Decay estimates and $N$-wave follow exactly as in \cite{1, shyamcontrol} and therefore we omit here.
 \end{remark}

\section{Proof of the main theorem}
\setcounter{equation}{0}
First we give the proof of part I of the main Theorem using the convex modification (\ref{428}) and the structure  Theorem \ref{theorem28}. 
\begin{lemma}\h{lemma46}
Let $(f,C,D)$ be a convex-convex type triplet and $\al_1\leq \al_2 \leq \ti{\al}<C\leq D< \ti{\B}\leq \B_2\leq \B_1 $. Assume that 
\be\h{n428}
f^\p(\ti{\al})<f^\p(\ti{\B}),\ f(\T)<L_{\ti{\al},\ti{\B}}(\T), \ \mbox{for } \T\in[C,D].
\ee
Let $A<B, \bar{u}_0\in L^\f(A,B)$ such that 
\begin{eqnarray}
\mbox{either}\ \bar{u}_0(x)\in (-\f,\ti{\al}] &\mbox{for}& x\in[A,B], \h{430}\\
\mbox{or} \ \ \ \ \ \bar{u}_0(x)\in [\ti{\B},\f) &\mbox{for}& x\in [A,B].\h{431}
\end{eqnarray}
Let $u_\pm\in L^\f(\R)$ satisfies (\ref{119}) and (\ref{120}) and $u_0$ be defined as in (\ref{16}). Let $u$ be the solution of (\ref{11}), (\ref{12}). 
Then there exist a $\g>0$, $(x_0,T_0)\in \R\times(0,\f)$  and a Lipschitz curve $r(\cdot)$ in $[T_0,\f)$ such that for $t>T_0$,
\be\h{430}
r(T_0)=x_0, T_0\leq \g |A-B|,
\ee
\be\h{431}
u(x,t)\in \begin{cases}
 [\al_1,\al_2] \mbox{ if } x>r(t)\\
 [\B_2,\B_1] \mbox{ if } x<r(t).
\end{cases}
\ee
Furthermore if $(g,\ti{C},\ti{D})$ be another convex-convex type triplet such that $g=f$ in a neighbourhood of $(-\f,\ti{\al})\cup (\ti{\B},\f)$, then $u$ is also the solution of (\ref{11}), (\ref{12}) with flux $g$.
\begin{proof}
Let $\ti{\al}<\al<C\leq D<\B<\ti{\B}$ such that for $\T\in [C,D],$
\be\h{432}
f^\p(\al)<f^\p(\B),\ f(\T)<L_{\al,\B}(\T)
\ee
and let $\ti{f}$ be a convex modification of $f$ as in (\ref{428}).
 First assume that 
\be\h{433}
\bar{u}_0(x)\in (-\f,\ti{\al}], \ \mbox{for}\ x\in [A,B].
\ee
Now consider a Riemann problem 
\be\h{434}
w_0(x)=\left\{\begin{array}{lll}
a &\mbox{if}& x<z_0,\\
b &\mbox{if}& x>z_0,
\end{array}\right.
\ee
where $a\leq \ti{\al}$ and $b\geq \ti{\B}$. Then from Oleinik entropy condition and from (\ref{432}), (\ref{427}), the solution to (\ref{11}) with flux $f$ and initial data  (\ref{434}) is same as the solution to (\ref{11}) with flux $\ti{f}$ and data (\ref{434}). Hence by front tracking Lemma \ref{lemma22}, the solution to (\ref{11}) with flux $f$ and the data given by the hypothesis is same as that of (\ref{11}) with flux $\ti{f}$. Furthermore range of $u$ is contained in $(-\f,\ti{\al}]\cup [\ti{\B},\f)$ and hence this is also the solution of (\ref{11}) with flux $g$.
\par From (\ref{427}), $f^\p(\ti{\al})\leq f^\p(\al)<f^\p(\B)\leq f^\p(\ti{\B}).$ Hence $\ti{f}^\p(\ti{\B})>\ti{f}^\p(\ti{\al})$. Therefore (\ref{430}), (\ref{431}) follows from (1) of Theorem \ref{theorem28} with flux $\ti{f}$. Similarly the result follows if $\bar{u}_0(x)\in [\ti{\B},\f)$ for $x\in (A,B)$. This proves the lemma.
\end{proof}
\end{lemma}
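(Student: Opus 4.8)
The plan is to reduce the non-convex problem to a genuinely convex one, where the structure theorem applies, by replacing $f$ with a convex modification that the Kruzkov solution cannot tell apart from $f$ on the range of values that actually occur. First I would use the strict inequalities in (\ref{n428}) together with the convexity of $f$ on $(-\f,C]$ and on $[D,\f)$ to fix auxiliary points $\ti{\al}<\al<C\le D<\B<\ti{\B}$ with $f'(\al)<f'(\B)$ and $f(\T)<L_{\al,\B}(\T)$ for $\T\in[C,D]$; continuity of $f$ and $f'$ makes such a choice possible. Then I would pass to the convex modification $\ti{f}$ of $f$ provided by (\ref{428}): a $C^1$, globally convex flux with only finitely many degeneracies that coincides with $f$ on $(-\f,\al]\cup[\B,\f)$, all the surgery being confined to $(\al,\B)$ and done --- this is precisely what $f<L_{\al,\B}$ on $[C,D]$ permits --- so that $\ti{f}$ and $f$ have the same entropy (Oleinik) solution for every Riemann datum valued in $(-\f,\ti{\al}]\cup[\ti{\B},\f)$, the common solution again being valued there.

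Next I would prove that the solution $u$ of (\ref{11})--(\ref{12}) with flux $f$ is literally the same function as the solution with flux $\ti{f}$, by front tracking. By the previous step the two fluxes yield the same Riemann solution whenever both end-states lie in $(-\f,\ti{\al}]\cup[\ti{\B},\f)$; and by (\ref{119}), (\ref{120}) the data $u_+$ is valued in $[\al_1,\al_2]\subset(-\f,\ti{\al}]$, $u_-$ in $[\B_2,\B_1]\subset[\ti{\B},\f)$, while $\bar u_0$ lies wholly in one of these two sets, so every Riemann problem met in a piecewise-constant, piecewise-affine front-tracking scheme (Lemma \ref{lemma22}) is of that type and the approximations for $f$ and for $\ti{f}$ agree and stay valued in $(-\f,\ti{\al}]\cup[\ti{\B},\f)$. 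Passing to the limit by Lemma \ref{lemma21} gives $u$ equal to the $\ti{f}$-solution and valued in that set. Since by hypothesis $g=f$ --- hence $g=\ti{f}$ --- on a neighbourhood of $(-\f,\ti{\al})\cup(\ti{\B},\f)$, the Kruzkov entropy inequalities for $u$ are unaffected when $f$ is replaced by $g$, so $u$ solves (\ref{11})--(\ref{12}) with flux $g$ as well; this settles the last assertion.

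Then I would invoke the structure theorem. As $\ti{f}=f$ on $(-\f,\al]\cup[\B,\f)$ we have $\ti{f}'(\al_2)=f'(\al_2)$ and $\ti{f}'(\B_2)=f'(\B_2)$, and the convexity of $f$ on $(-\f,C]$ and $[D,\f)$ together with the Step-1 choice gives $f'(\al_2)\le f'(\ti{\al})\le f'(\al)<f'(\B)\le f'(\ti{\B})\le f'(\B_2)$, so hypothesis (\ref{298}) of Theorem \ref{theorem28} holds for the convex flux $\ti{f}$ with the brackets $[\al_1,\al_2]$, $[\B_2,\B_1]$ and the data $u_0$. Part~1 (the shock case) of that theorem then produces $\g>0$, a point $(x_0,T_0)$ and a Lipschitz curve $r$ with $r(T_0)=x_0$, $T_0\le\g|A-B|$, and $u(x,t)\in[\B_2,\B_1]$ for $x<r(t)$, $u(x,t)\in[\al_1,\al_2]$ for $x>r(t)$ once $t>T_0$, the bounds depending only on $\|u_0\|_\f$, the Lipschitz constant of $f$ on $[-\|u_0\|_\f,\|u_0\|_\f]$, and $f'(\ti{\B})-f'(\ti{\al})$. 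Because $u$ is the same function whether we use $f$, $\ti{f}$ or $g$, this is the asserted conclusion. The case $\bar u_0\in[\ti{\B},\f)$ on $[A,B]$ is handled in exactly the same way, with the two value ranges interchanged.

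The hard part will be the second step --- making the flux-equivalence watertight: one must be sure that the convex surgery on $(\al,\B)$ never changes a Riemann solution that can actually arise, and that no front-tracking interaction ever creates a state in the gap $(\ti{\al},\ti{\B})$ which would feel the difference between $f$ and $\ti{f}$. This is where the sign condition $f<L_{\ti{\al},\ti{\B}}$ on $[C,D]$ and the placement of the auxiliary points $\al,\B$ relative to $C,D$ are genuinely used; everything else is bookkeeping on top of Theorem \ref{theorem28} and the standard front-tracking stability lemmas.
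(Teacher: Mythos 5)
Your proposal is correct and follows essentially the same route as the paper: pick $\ti{\al}<\al<C\le D<\B<\ti{\B}$ satisfying (\ref{432}), replace $f$ by its convex modification $\ti{f}$, check via the Riemann problem and the front-tracking Lemmas \ref{lemma22}, \ref{lemma21} that the solution is unchanged and stays valued in $(-\f,\ti{\al}]\cup[\ti{\B},\f)$ (whence it also solves the equation with flux $g$), and then apply the shock case of Theorem \ref{theorem28} to $\ti{f}$ using $\ti{f}^\p(\B_2)>\ti{f}^\p(\al_2)$. The only difference is that you spell out the monotonicity chain $f^\p(\al_2)\le f^\p(\ti{\al})\le f^\p(\al)<f^\p(\B)\le f^\p(\ti{\B})\le f^\p(\B_2)$ slightly more explicitly than the paper does.
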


\begin{proof}[Proof of the Theorem for convex-convex type flux]  Let $(f,C,D)$ be a convex-convex type triplet and $\al_1\leq \al_2<C<D<\B_2\leq \B_1$ satisfying (\ref{116}) to (\ref{118}) and (\ref{121}). Let $u_0,u_\pm, \bar{u}_0$ be as in (\ref{16}), (\ref{119}) and (\ref{120}).  Let 
$$m_1=\min\{\al_1,\inf \bar{u}_0\}, m_2=\max\{\B_1, \sup\bar{u}_0\},$$
\begin{eqnarray*}
u_{0,m_i}(x)=\left\{\begin{array}{lll}
u_0(x) &\mbox{if}&x\notin (A,B),\\
m_i &\mbox{if}& x\in (A,B),\end{array}\right.
\end{eqnarray*}
and $u_i$ for $i=1,2$ be the solution of (\ref{11}) and (\ref{12}) with initial data $u_{0,m_i}$. Since $u_{0,m_1}\leq u_0 \leq u_{0,m_2}$, therefore $u_1\leq u \leq u_2$. From Lemma \ref{lemma46}, for $i=1,2$, there exist $\g_i, (x_i,T_i)\in \R\times(0,\f)$, Lipschitz curves $r_i(\cdot)$ such that for $t>T_i$
\begin{eqnarray*}
r_i(T_i)=x_i,\ T_i\leq \g_i|A-B|,\end{eqnarray*}
\begin{eqnarray*}
u_i(x,t)\in \begin{cases} [\al_1,\al_2]\mbox{ if } x>r_i(t)\\
  [\B_2,\B_1]\mbox{ if } x<r_i(t).\end{cases}
\end{eqnarray*}
Let $T=\max\{T_1,T_2\}$, $\g=\max\{\g_1,\g_2\}$, $A_1=r_1(T), B_1=r_2(T),$ then from $u_1\leq u \leq u_2$, to obtain 
\begin{eqnarray*}
u(x,t)\in \begin{cases} [\al_1,\al_2]\mbox{ if } x>B_1,\\
 [\al_1,\B_1]\mbox{ if }A_1<x<B_1,\\
  [\B_2,\B_1]\mbox{ if } x<A_1.\end{cases}
\end{eqnarray*}
\be\h{31}
T\leq \g |A-B|,
\ee
where $\g,T$ depending only on $||u_0||_\f$ and $f^\p(\B_2)-f^\p(\al_2).$
\par Hence without loss of generality we can assume that 
\be\h{32}
\bar{u}_0(x)\in [\al_1,\B_1]\ \mbox{for}\ x\in (A,B).
\ee
From (\ref{118}) and (\ref{121}), choose an $\e>0, (\eta_1,\xi_1), (\eta_2,\xi_2)$ in $\R^2$ such that 
\be
\al_1+\e<C<D<\B_1-\e,\h{33}\\
(\B_1-\B_2)+(\al_2-\al_1)<\e,\h{34}\\
f(\T)<L_{\al_1+\e, \B_1-\e}(\T)\ \mbox{for}\ \T\in [C,D],\h{35}\\
\eta_1<\al_1<D<\xi_1<\B_1-\e,\h{36}\\
\al_1+\e<\eta_2<C<\B_1<\xi_2,\h{37}\\
\mbox{ For } \T\in [C,D],\  f(\T)<\min\{L_{\eta_1,\xi_1}(\T),L_{\eta_2,\xi_2}(\T)\}\h{38}.
\ee
Let 
\begin{eqnarray*}
u_{0,A}=\left\{\begin{array}{lll}
\eta_1 &\mbox{if}& x>B,\\
\xi_1 &\mbox{if}& x<B.
\end{array}\right.
\end{eqnarray*}
\begin{eqnarray*}
u_{0,B}=\left\{\begin{array}{lll}
\eta_2 &\mbox{if}& x>A,\\
\xi_2 &\mbox{if}& x<A.
\end{array}\right.
\end{eqnarray*}
\begin{eqnarray*}
l_{1,B}(\T)&=& B+\left(\frac{f(\eta_1)-f(\xi_1)}{\eta_1-\xi_1}\right)\T, \\
l_{2,A}(\T)&=& A+\left(\frac{f(\eta_2)-f(\xi_2)}{\eta_2-\xi_2}\right)\T .
\end{eqnarray*}
Then the solutions $u_A$ and $u_B$ of (\ref{11}) with respective initial data $u_{0,A}$ and $u_{0,B}$ are given by 
\begin{eqnarray*}
u_{A}(x,t)=\left\{\begin{array}{lll}
\eta_1 &\mbox{if}& x>l_{1,B}(t),\\
\xi_1 &\mbox{if}& x<l_{1,B}(t).
\end{array}\right.
\end{eqnarray*}
\begin{eqnarray*}
u_{B}(x,t)=\left\{\begin{array}{lll}
\eta_2 &\mbox{if}& x>l_{2,A}(t),\\
\xi_2 &\mbox{if}& x<l_{2,A}(t).
\end{array}\right.
\end{eqnarray*}
\noindent Claim: There exist $\g>0, T_1>0, A_1\leq B_1$, depending only on $||u_0||_\f$ and $f^\p(\B_2)-f^\p(\al_2)$ such that 
\be
l_{1,B}(T_1)\leq A_1 \leq B_1 \leq l_{2,A}(T_1),\h{39}\\
T_1\leq \g|A-B|,\h{310}\\
|A_1-B_1|\leq \frac{\B_1-\al_1-\e}{\B_2-\al_2}|A-B|\h{311},
\ee
\be\h{312}
u(x,t)\in \begin{cases} [\al_1,\al_2] \mbox{ if } x>B_1,\\
 [\al_1,\B_1]\mbox{ if } A_1<x<B_1,\\
  [\B_2,\B_1]\mbox{ if } x<A_1.\end{cases}
\ee
In order to prove the claim, define 
\be\h{313}
u_{0,1}(x)=\left\{\begin{array}{lll}
u_0(x) &\mbox{if}& x\notin(A,B)\ \mbox{or}\ u_0(x)<\al_1+\frac{\e}{2},\\
\al_1+\frac{\e}{2} &\mbox{if}& x\in(A,B),\ u_0(x)\geq \al_1+\frac{\e}{2},\\
\end{array}\right.
\ee
\be\h{314}
u_{0,2}(x)=\left\{\begin{array}{lll}
u_0(x) &\mbox{if}& x\notin(A,B)\ \mbox{or}\ u_0(x)\geq \B_1-\frac{\e}{2},\\
\B_1-\frac{\e}{2} &\mbox{if}& x\in(A,B), u_0(x)\leq \B_1-\frac{\e}{2}\\
\end{array}\right.
\ee
and $u_1,u_2$ be the corresponding solutions of (\ref{11}),(\ref{12}) with respect to the initial data $u_{0,1}$ and $u_{0,2}$. From (\ref{33}), (\ref{38})  and Lemma \ref{lemma46}, there exist  $\g>0, T_1>0, A_1, B_1$ such that 
$$T_1\leq \g |A-B|,$$
\begin{eqnarray*}
u_1(x,T_1)\in \begin{cases} [\al_1,\al_2]\mbox{ if } x>A_1,\\
  [\B_2,\B_1]\mbox{ if } x<A_1.\end{cases}
\end{eqnarray*}
\begin{eqnarray*}
u_2(x,T_1)\in \begin{cases} [\al_1,\al_2] \mbox{ if } x>B_1,\\
  [\B_2,\B_1]\mbox{ if } x<B_1.\end{cases}\end{eqnarray*}
Since $u_{0,1}\leq u_0 \leq u_{0,2}$, therefore $u_1\leq u \leq u_2$. This implies that $A_1\leq B_1$ and 
\begin{eqnarray}\h{315}
u(x,T_1)\in \begin{cases} [\al_1,\al_2]\mbox{ if } x>B_1,\\
  [\al_1,\B_1]\mbox{ if } A_1<x<B_1,\\
   [\B_2,\B_1]\mbox{ if } x<A_1.\end{cases}
\end{eqnarray}
Also $u_{0,A}\leq u_{0,1}\leq u_{0,2}\leq u_{0,B}$ and hence $u_A\leq u_1\leq u_2 \leq u_B$. Therefore 
\be\h{316}
l_{1,B}(T_1)\leq A_1\leq B_1\leq l_{2,A}(T_1)
\ee
and from $L^1_{\mbox{loc}}$ contraction, we have from (\ref{313}), (\ref{314})
\begin{eqnarray*}
(\B_2-\al_2)|A_1-B_1| &\leq& \int\limits_{A_1}^{B_1}|u_1(x,T_1)-u_2(x,T_1)|dx\\
 &\leq& \int\limits_{A}^{B}|u_{0,1}(x)-u_{0,2}(x)|dx\\
 &\leq& (\B_1-\al_1-\e)|A-B|.
\end{eqnarray*}
This gives 
\be\h{317}
|A_1-B_1|\leq \left(\frac{\B_1-\al_1-\e}{\B_2-\al_2}\right)|A-B|.
\ee
This proves the claim. 
\par Repeating the above procedure for $t>T_1$, by induction we can find $\g>0$, sequence $T_n>T_{n+1}$, $A_n\leq B_n$, with $A_0=A, B_0=B$ such that for $n\geq 1$,
\be
l_{1,B_{n-1}}(T_n)\leq A_n \leq B_n\leq l_{2,A_{n-1}}(T_n),\h{318}\\
T_n\leq T_{n-1}+\g|A_{n-1}-B_{n-1}|,\h{319}\\
|A_n-B_n|\leq \left(\frac{\B_1-\al_1-\e}{\B_2-\al_2}\right)|A_{n-1}-B_{n-1}|,\h{320}
\ee
\be\h{321}
u(x,T_n)\in \begin{cases} [\al_1,\al_2]\mbox{ if } x>B_n,\\
  [\al_1,\B_1]\mbox{ if } A_n<x<B_n,\\
   [\B_2,\B_1]\mbox{ if } x<A_n.\end{cases}
\ee
From (\ref{34}), (\ref{319}) and (\ref{320}) we have 
\begin{eqnarray*}
&&\delta=\left(\frac{\B_1-\al_1-\e}{\B_2-\al_2}\right)<1,\\
&&T_n\leq \g(1+\delta+\delta^2+\cdots+\delta^{n-1})|A-B|\leq \frac{\g}{1-\delta}|A-B|,\\
&&|A_n-B_n| \leq \delta^n|A-B|.
\end{eqnarray*}
Hence $\{T_n\}$ is bounded, from (\ref{318}), $A_n$, $B_n$ are bounded and $|A_n-B_n|\rr 0$ as $n\rr \f.$ Let 
$$(x_0,T_0)=\lim\limits_{n\rr \f} (A_n,T_n).$$ 
Then from (\ref{321}), we have 
$$T_0\leq \frac{\g}{1-\delta}|A-B|,$$
\begin{eqnarray*}
u(x,T_0)\in\begin{cases}
[\al_1,\al_2] \mbox{ if } x<x_0,\\
 [\B_2,\B_1] \mbox{ if } x>x_0.
\end{cases}
\end{eqnarray*}
From Lemma \ref{lemma46}, there exists a Lipschitz curve $r(\cdot)$ such that $r(T_0)=x_0$ and for $t>T_0$, 
\begin{eqnarray*}
u(x,t)\in\begin{cases}
 [\al_1,\al_2] \mbox{ if } x>r(t),\\
[\B_2,\B_1] \mbox{ if } x<r(t).
\end{cases}
\end{eqnarray*}
This concludes the proof of the Theorem for the convex-convex type flux.\end{proof}
\par Next we consider the convex-concave type flux and the method of the proof is different from that of convex-convex type flux as they have different polarity. 
 In order to prove the second part of the Theorem  with condition 1 ((\ref{123}) to (\ref{126})) we need to prove the following Lemmas whose proof depends on the front tracking and first part of the structure Theorem. Basically this Lemma reduces the problem to having $\bar{u}_0\in [\al_1,\B_1].$
\par Second part of the Theorem  with condition 2 follows in a similar way where one has to use structure Theorem for concave flux instead of convex flux.
\par From Lemma \ref{lemma23} we will prove the following Lemma which is essential to prove the second part of the main Theorem.
\par Let $(f,C,D)$ be a convex-concave type triplet, $ \al_1,\al_2,\beta_1, \beta_2$ and $f$ satisfies 
\be
&&\al_1\leq \al_2 <C<D<\beta_2\leq \beta_1, \h{455}\\
&& f(\T)>L_{\al_2,\beta_2}(\T),\ \T\in [C,D],\h{456}\\
&& f(\beta_2)<L_{\al_1}(\beta_2)\h{457}.
\ee
From (\ref{457}), choose $\al_0<\al_1<D$ (see figure \ref{Fig3}) such that 
\be\h{458}
f(\beta_2)=L_{\al_0}(\beta_2).
\ee
Let $u_\pm, \bar{u}_0\in L^\f(\R)$ such that 
\be\h{459}
u_+(x)\in [\beta_2,\beta_1], \ u_-(x)\in [\al_1,\al_2],
\ee
\be\h{460}
u(x,t)=\left\{\begin{array}{lll}
u_+(x) &\mbox{if}& x>B,\\
\bar{u}_0(x) &\mbox{if}& x\in (A,B),\\
u_-(x) &\mbox{if}& x<A,\end{array}\right.
\ee
and $u$ be the solution of (\ref{11}) and (\ref{12}). Then 
\begin{lemma}\h{lemma47}
Let $\B_0\geq \B_2$ and assume that $\bar{u}_0$ satisfies one of the following conditions 
\begin{itemize}
\item [1.] $\bar{u}_0(x)\in [\al_0,\al_2]$ if $x\in (A,B)$.
\item [2.] $\bar{u}_0(x)\in [\B_2,\B_0]$ if $x\in (A,B)$.
\end{itemize}
Then there exist $(x_0,T_0)\in \R\times (0,\f)$, $\g>0$ and a Lipschitz curve $r(\cdot)$ depending only on $||u_0||_\f$, $L_{\al_1}(\B_2)-f(\B_2)$ such that 
\be\h{461}
T_0\leq \g|A-B|, r(T_0)=x_0,
\ee
\be\h{462}
u(x,t)\in \begin{cases}
 [\al_1,\al_2] \mbox{ if } x<r(t),\\
 [\B_2,\B_1] \mbox{ if } x>r(t).
\end{cases}
\ee
\begin{proof}
Without loss of generality, let $\bar{u}_0(x)\in [\al_0,\al_2]$ for $x\in (A,B)$. Other case follows similarly. First assume that $u_0$ is piecewise constant with finite number of discontinuities. Define 
\begin{eqnarray*}
\xi_0&=&\min\left\{\frac{f(p)-f(q)}{p-q}: p\in [\al_0,\al_2], q\in [\B_2,\B_1]\right\},\\
\xi_1&=&\max\left\{\frac{f(p)-f(q)}{p-q}: p\in [\al_0,\al_2], q\in [\B_2,\B_1]\right\},\\
m_0&=&\min\left\{\frac{f(p)-f(q)}{p-q}: p,q \in [\al_0,\al_2]\right\},\\
m_1&=&\max\left\{\frac{f(p)-f(q)}{p-q}: p,q\in [\al_0,\al_2]\right\},\\
I&=& [-||u_0||_\f, ||u_0||_\f],\\
E&=& \{\al_1,\al_2,\B_1,\B_2,\al_0, \B_0, C,D\}\cup \{\mbox{jumps of } u_0\}. 
\end{eqnarray*}
Let $\{f_n\}$ be a sequence of piecewise affine functions such that (see Lemma \ref{lemma41})
\begin{itemize}
\item [i.] $f_n\rr f$ in $C^0_{\mbox{loc}}(\R).$
\item [ii.] For some $C>0$, for all $n$, $Lip (f_n, I)\leq C Lip(f,I)$.
\item [iii.] $E\subset$ corner points of $f_n$ (see definition \ref{definition41}), for all $n$.
\item [iv.] $(f_n,C,D)$ is a convex-concave type triplet with 
\begin{eqnarray*}
&&f_n(\T)>L_{\al_2,\B_2}(\T) \ \mbox{for}\ \T\in [C,D]\\
&& f_n(\B_2)<L_{\al_1}(\B_2)\\
&& f_n(\B_2)=L_{\al_0}(\B_2)=f(\al_0)+f^\p_{n_-}(\al_0)(\B_2-\al_0), \mbox{for all}\ n.
\end{eqnarray*}
\end{itemize}
Let $u_n$ be the solution of (\ref{11}), (\ref{12}) with flux $f_n$ and initial data $u_0$. Define 
\be
\eta_{n_1}&=&\frac{f_n(u_0(A-))-f_n(u_0(A+))}{u_0(A-)-u_0(A+)},\h{463}\\
\T_{n_1}&=&\frac{f_n(u_0(B-))-f_n(u_0(B+))}{u_0(B-)-u_0(B+)},\h{464}\\
l_{n_1}(t)&=&A+\eta_{n_1}t,\ L_{n_1}(t)=B+\T_{n_1}t\h{465}.
\ee
Now from (\ref{458}), $m_0>\xi_1$ and hence the lines $A+m_0t$ and $B+\xi_1 t$ meet at 
$\ti{T}_0>0$ given by 
\be\h{467}
\ti{T}_0=\frac{B-A}{m_0-\xi_1}.
\ee
Furthermore 
\be\h{466}
m_0\leq \eta_{n_1}=\frac{d l_{n_1}}{dt} \leq m_1, \xi_0\leq \T_{n_1}=\frac{d L_{n_1}}{dt}\leq \xi_1.
\ee
Let $T_{n_1}$ be the first point of interaction of waves of $u_n$. Then for $0<t<T_{n_1}$
and from Oleinik entropy condition  and (\ref{458}),
\begin{eqnarray*}
u_n(x,t)\in \begin{cases}
[\al_1,\al_2] \mbox{ if } x<l_{n_1}(t),\\
 [\al_0,\al_2] \mbox{ if } l_{n_1}(t)<x<L_{n_1}(t)\\
 [\B_2,\B_1] \mbox{ if } x>L_{n_1}(t).
\end{cases}\end{eqnarray*}
Suppose $l_{n_1}(T_{n_1})=A_{n_1}<B_{n_1}=L_{n_1}(T_{n_1}),$ then let $T_{n_2}>T_{n_1}$ be the second time of interaction of waves of $u_n$. Define
\begin{eqnarray*}
\eta_{n_2}&=&\frac{f_n(u_n(A_{n_1}-,T_{n_1}))-f_n(u_n(A_{n_1}+,T_{n_1}))}{u_n(A_{n_1}-, T_{n_1})-u_n(A_{n_1}+,T_{n_1})},\\
\T_{n_2}&=&\frac{f_n(u_n(B_{n_1}-,T_{n_1}))-f_n(u_n(B_{n_1}+,T_{n_1}))}{u_n(B_{n_1}-, T_{n_1})-u_n(B_{n_1}+,T_{n_1})},\\
l_{n_{2}}(t)&=& A_{n_1}+\eta_{n_2}(t-T_{n_1}),\ L_{n_2}(t)=B_{n_1}+\T_{n_2}(t-T_{n_1}),
\end{eqnarray*}
\begin{eqnarray*}
l_n(t)=\left\{\begin{array}{lll}
l_{n_1}(t) &\mbox{if}& t<T_{n_1},\\
l_{n_2}(t) &\mbox{if}& T_{n_1}\leq t \leq T_{n_2},
\end{array}\right.
L_n(t)=\left\{\begin{array}{lll}
L_{n_1}(t) &\mbox{if}& t<T_{n_1},\\
L_{n_2}(t) &\mbox{if}& T_{n_1}\leq t \leq T_{n_2}.
\end{array}\right.
\end{eqnarray*}
Then for $0<t<T_{n_2}$,
\begin{eqnarray}\h{468}
u_n(x,t)\in \begin{cases}
[\al_1,\al_2] \mbox{ if } x<l_{n}(t),\\
 [\al_0,\al_2] \mbox{ if } l_{n}(t)<x<L_{n}(t),\\
 [\B_2,\B_1] \mbox{ if } x>L_{n}(t).
\end{cases}
\end{eqnarray}
\be\h{469}
m_0\leq \frac{dl_{n}}{dt}\leq m_1, \ \xi_0\leq \frac{d L_n}{dt}\leq \xi_1,\ l_n(0)=A, L_n(0)=B.
\ee
If $A_{n_2}=l_n(T_{n_2})<B_{n_2}=L_n(T_{n_2}),$ then repeat the above to obtain the curves $l_n, L_n$ satisfying (\ref{468}) and (\ref{469}) for $t\leq T_{n_k}$, where $T_{n_{k}}$ is the $k$ th time interaction of waves of $u_n$. From (\ref{469}) and (\ref{467}), $l_n$ and $L_n$ meet at $T_{n_{k_0}}\leq \ti{T}_0$ with $x_n=l_n(T_{n_{k_0}})=L_n(T_{n_{k_0}}).$ Then 
\begin{eqnarray*}
u_n(x,T_{n_{k_0}})\in \begin{cases}
 [\al_1,\al_2] \mbox{ if } x<x_n,\\
 [\B_2,\B_1] \mbox{ if } x>x_n.
\end{cases}
\end{eqnarray*}
and $\{x_n\}$ is bounded from (\ref{469}). Furthermore for $t>T_{n_{k_0}},$
\begin{eqnarray}\h{470}
u_n(x,t)\in \begin{cases}
 [\al_1,\al_2] \mbox{ if } x<L_n(t),\\
 [\B_2,\B_1] \mbox{ if } x>L_n(t).
\end{cases}
\end{eqnarray}
Hence from Lemma \ref{lemma21} and Arzela Ascoli's Theorem, for a subsequence $x_n\rr x_0, T_{n_{k_0}}\rr T_0,$ $u_n\rr u$ in $L^1_{\mbox{loc}}(\R\times(0,\f)).$
$L_n\rr r(\cdot)$ as $n\rr \f$. Since $T_0\leq \ti{T}_0,$ hence with $\g=\frac{1}{m_0-\xi_1}$, and from (\ref{470}), (\ref{461}) and (\ref{462}) follows.  If $u_0$ is arbitrary, approximate $u_0$ by piecewise constant functions and from $L^1$ contraction, the Lemma follows, since $\g, x_0, T_0$ depends only $||u_0||_\f$ and $L_{\al_1}(\B_2)-f(\B_2)$. This proves the Lemma.
\end{proof}
\end{lemma}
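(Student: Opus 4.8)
The plan is to prove the statement first when $u_0$ is piecewise constant and the flux is piecewise affine, and then to remove both restrictions by $L^1_{\mbox{loc}}$ approximation; it is enough to treat Case 1, $\bar{u}_0(x)\in[\al_0,\al_2]$ on $(A,B)$, since Case 2 follows symmetrically with the roles of the convex and concave branches interchanged. The geometric fact driving everything is (\ref{458}): it places $(\B_2,f(\B_2))$ on the tangent to $f$ at $\al_0$, which is what will make the left edge of the intermediate region travel strictly faster than its right edge. Concretely, using Lemma \ref{lemma41} I would choose piecewise affine $f_n\rr f$ in $C^0_{\mbox{loc}}(\R)$ with $Lip(f_n,I)$ bounded uniformly in $n$ on every compact $I\subset\R$, whose corner set contains $\al_0,\al_1,\al_2,\B_2,\B_1,C,D$ and all jump points of $u_0$, and for which $(f_n,C,D)$ stays convex--concave with $f_n(\T)>L_{\al_2,\B_2}(\T)$ on $[C,D]$, $f_n(\B_2)<L_{\al_1}(\B_2)$, and --- crucially --- the exact tangency $f_n(\B_2)=f(\al_0)+f_n^\p(\al_0-)(\B_2-\al_0)$. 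For piecewise constant $u_0$ the solution $u_n$ of (\ref{11}), (\ref{12}) with flux $f_n$ is then piecewise constant with boundedly many jumps (Lemma \ref{lemma22}), and $u_n\rr u$ in $L^1_{\mbox{loc}}$ by Lemma \ref{lemma21}.

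Next I would follow two piecewise linear curves: a left front $l_n(\cdot)$ emanating from $A$ and a right front $L_n(\cdot)$ emanating from $B$, each redefined at successive wave--interaction times as the wave through the corresponding discontinuity. Using Oleinik's entropy condition on the convex branch together with the tangency at $\al_0$, an induction on the (finitely many) interaction times should give, between interactions, $u_n(x,t)\in[\al_1,\al_2]$ for $x<l_n(t)$, $u_n(x,t)\in[\al_0,\al_2]$ for $l_n(t)<x<L_n(t)$, and $u_n(x,t)\in[\B_2,\B_1]$ for $x>L_n(t)$, with $dl_n/dt\in[m_0,m_1]$ and $dL_n/dt\in[\xi_0,\xi_1]$. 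The numbers $m_i,\xi_i$ are the extreme secant slopes among the value ranges involved, so they are independent of $n$ and of $t$, and (\ref{458}), preserved above by making $\al_0$ and $\B_2$ corner points of $f_n$, forces the strict inequality $m_0>\xi_1$: the left front joins values $\ge\al_1$ on its left to values $\ge\al_0$ on its right and is thus strictly faster than the right front, which joins values $\le\al_2$ to values $\ge\B_2$ through a wave whose speeds cannot exceed the slope of the tangent at $\al_0$.

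Since $m_0>\xi_1$, the lines $A+m_0t$ and $B+\xi_1t$ cross at $\ti{T}_0=(B-A)/(m_0-\xi_1)$, so $l_n$ and $L_n$ must collide at some $T_n\le\ti{T}_0$ at a point $x_n$ that stays bounded (the front speeds are bounded), and for $t>T_n$ the intermediate region has been consumed and $L_n$ separates $[\al_1,\al_2]$ on the left from $[\B_2,\B_1]$ on the right. Because the $L_n$ are uniformly Lipschitz, Arzela--Ascoli produces a subsequence with $L_n\rr r$ in $C^0_{\mbox{loc}}$, $x_n\rr x_0$, and $T_n\rr T_0\le\ti{T}_0$; passing to the limit in the inclusions, via $u_n\rr u$ in $L^1_{\mbox{loc}}$ and the closedness of the intervals, yields (\ref{461}) and (\ref{462}) with $\g=1/(m_0-\xi_1)$, where $m_0-\xi_1$ is bounded below in terms of $L_{\al_1}(\B_2)-f(\B_2)$ and everything else in terms of $||u_0||_\f$. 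For a general $u_0\in L^\f(\R)$ I would approximate it by piecewise constant data obeying the same pointwise bounds, invoke $L^1_{\mbox{loc}}$ contraction, and pass to the limit once more; since $\g,x_0,T_0$ depend only on $||u_0||_\f$ and $L_{\al_1}(\B_2)-f(\B_2)$, the conclusion survives. Case 2 is handled identically with the convex and concave sides exchanged.

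The main obstacle is getting the first two steps to mesh: the piecewise affine fluxes $f_n$ must be chosen so that $\al_0$ and $\B_2$ remain corner points with the tangent to $f_n$ at $\al_0$ still passing through $(\B_2,f_n(\B_2))$, so that the strict gap $m_0>\xi_1$ --- the engine of the whole argument --- survives discretisation uniformly in $n$; and the invariant--region inclusions must be carried through arbitrarily many wave interactions, which relies on the finiteness of interactions for piecewise affine fluxes and on a careful Oleinik bookkeeping of the speed and polarity of each newly created wave. Verifying $m_0>\xi_1$ directly from (\ref{458}) and checking the uniform bounds that make Arzela--Ascoli and the final passage to the limit legitimate are the remaining technical points.
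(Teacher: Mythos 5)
Your proposal is correct and follows essentially the same route as the paper: the same front-tracking construction with piecewise affine fluxes whose corner set contains $\al_0,\B_2$ and the jumps of $u_0$, the same secant-slope quantities $m_0,m_1,\xi_0,\xi_1$ with the strict gap $m_0>\xi_1$ extracted from the tangency (\ref{458}), the same collision bound $\ti{T}_0=(B-A)/(m_0-\xi_1)$ giving $\g=1/(m_0-\xi_1)$, and the same two-stage limit via Arzela--Ascoli, Lemma \ref{lemma21} and $L^1_{\mbox{loc}}$ contraction. The technical points you flag at the end (preserving the tangency under discretisation and carrying the invariant-region inclusions through all interactions) are exactly the ones the paper handles by its inductive bookkeeping.
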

\begin{lemma}\h{lemma48}
Let $\al_2\leq \ti{\al}<C\leq D<\ti{\B}\leq \B_2$ such that 
\be\h{470}
\begin{array}{lll}
f(\T)>L_{\ti{\al},\ti{\B}}(\T), \ \mbox{for }\T\in[C,D],\\
f(\ti{\B})<L_{\al_1}(\ti{\B}).
\end{array}
\ee
Assume $\bar{u}_0$ satisfies one of the following hypothesis 
\begin{itemize}
\item [i.] range of $\bar{u}_0\subset [\al_1,\ti{\al}]$
\item [ii.] range of $\bar{u}_0\subset [\ti{\B},\B_1]$
\end{itemize}
then there exist $(x_0,T_0)\in \R\times(0,\f), \g>0$ and a Lipschitz curve $r(\cdot)$ in $[T_0,\f)$ depending only on $||u_0||_\f, f^\p(\al_0)-f^\p(\al_1)$ such that for $t>T_0$
\begin{eqnarray}
T_0\leq \g |A-B|, r(T_0)=x_0, \h{471}
\end{eqnarray}
\begin{eqnarray}\h{472}
u(x,t)\in \begin{cases}
[\al_1,\al_2] \mbox{ if } x<r(t),\\
[\B_2, \B_1] \mbox{ if } x>r(t).
\end{cases}
\end{eqnarray}
\begin{proof}
We can assume that $\bar{u}_0\in [\al_1,\ti{\al}]$ for all $x\in (A,B)$. Other case follows in a similar way. Again we use front tracking. Let $u_0$ is piecewise constant with finite number of jumps. Define 
\begin{eqnarray*}
E&=&\{\al_1, \al_2, \B_1, \B_2, \ti{\al}, C, D\}\cup \{\mbox{Jumps of } u_0\},\\
I&=& [-||u_0||_\f,||u_0||_\f],\\
\xi_0&=& \min\left\{\frac{f(p)-f(q)}{p-q},\ p\in [\al_1,\ti{\al}], q\in [\B_1,\B_2]\right\},\\
\xi_1&=& \max\left\{\frac{f(p)-f(q)}{p-q},\ p\in [\al_1,\ti{\al}], q\in [\B_1,\B_2]\right\},\\
m_0&=&f^\p(\al_1), m_1=f^\p(\ti{\al}).
\end{eqnarray*}
Let $\{f_n\}$ be a sequence of piecewise affine functions such that  (see Lemma \ref{lemma41})

\begin{itemize}
\item [i.] $f_n\rr f$ in $C^0_{\mbox{loc}}$ with $f^{\p}_{n,-}(\al_1)=f^\p(\al_1)$,$\ Lip (f_n, I)\leq C Lip (f,I)$, for some constant $C>0$.
\item [ii.] For all $n$, $E\subset\mbox{ corner points of }f_n$.
\item [iii.] $(f_n,C,D)$ is a convex-concave type triplet with 
$$f_n(\T)>L_{\ti{\al},\ti{\B}}(\T), \forall\ \T\in [C,D].$$
\end{itemize}
Let $u_n$ be the solution to (\ref{11}), (\ref{12}) with flux $f_n$ and initial data $u_0$. Define $\eta_{n_1}, \T_{n_1}, l_{n_1}, L_{n_2}$ as in (\ref{462}), (\ref{463}) and (\ref{464}). Since $m_0>\xi_1$, let $\ti{T}_0$ 
 be the intersection point of $A+m_0t$ and $B+\xi_1t$. Furthermore (\ref{467}) holds.
\par Let $T_{n_1}$ be the first time of interaction of waves of $u_ n$, then for $0<t<T_{n_1}$, we have 
\begin{eqnarray*}
u(x,t)\in \begin{cases}
[\al_1,\al_2] \mbox{ if } x<l_{n_1}(t),\\
[\al_1, \ti{\al}] \mbox{ \ if } l_{n_1}(t)<x<L_{n_1}(t),\\
[\B_1,\B_2] \mbox{ if } x>L_{n_1}(t).
\end{cases}
\end{eqnarray*}
As in \ref{lemma47}, suppose $l_{n_1}(T_{n_1})=A_{n_1}<B_{n_1}=L_{n_1}(T_{n_1}),$ then by induction there exists a $T_{n_k}$ and a piecewise affine curves $l_n(t), L_n(t)$ for $0<t<T_{n_k}$
such that $l_n$ and $L_n$ satisfies (\ref{469}) and 
\begin{eqnarray*}
u(x,t)\in \begin{cases}
[\al_1,\al_2] \mbox{ if } x<l_{n}(t),\\
[\al_1, \ti{\al}] \mbox{ \ if } l_{n}(t)<x<L_{n}(t),\\
[\B_1,\B_2] \mbox{ if } x>L_{n}(t).
\end{cases}
\end{eqnarray*}
From (\ref{469}) and (\ref{465}), there exists $k_0$ such that $x_n=l_n(T_{n_{k_0}})=L_n(T_{n_{k_0}})$ and $T_{n_{k_0}}\leq \ti{T}_0=\frac{B-A}{m_0-\xi_1}$. Hence from $L^1$-contraction and Arzela-Ascoli Theorem, for a subsequence, still denoted by $n$ such that $u_n\rr u$ in $L^1_{\mbox{loc}}$, $x_n\rr x_0$, $T_{n_{k_0}}\rr T_0, l_n(t)\rr r(t)$, a Lipschitz curve with $r(T)=x_0$, $T_0\leq \g |B-A|,$
\begin{eqnarray*}
u(x,t)\in \begin{cases}
[\al_1,\al_2] \mbox{ if } x<r(t),\\
[\B_1,\B_2] \mbox{ if } x>r(t).
\end{cases}
\end{eqnarray*}
This proves the Lemma.
\end{proof}
\end{lemma}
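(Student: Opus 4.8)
The plan is to run the front-tracking scheme already used for Lemma \ref{lemma47}. First I reduce to case (i), i.e.\ $\bar u_0\in[\al_1,\ti\al]$ on $(A,B)$; case (ii), $\bar u_0\in[\ti\B,\B_1]$, is handled in exactly the same way after interchanging the roles of the convex branch and the concave branch (and of $A$ and $B$, of $u_-$ and $u_+$). Then I pass to an approximating problem: replace $u_0$ by a piecewise constant function with finitely many jumps and $f$ by a sequence $f_n$ of piecewise affine convex-concave fluxes with $f_n\rr f$ in $C^0_{\mbox{loc}}(\R)$, chosen (see Lemma \ref{lemma41}) so that $\{\al_1,\al_2,\ti\al,\ti\B,\B_2,\B_1,C,D\}$ together with all jump values of $u_0$ are corner points of $f_n$, so that $Lip(f_n,I)\le C\,Lip(f,I)$ on $I=[-||u_0||_\f,||u_0||_\f]$, so that $f^{\p}_{n,-}(\al_1)=f^\p(\al_1)$, and so that $(f_n,C,D)$ still satisfies $f_n>L_{\ti\al,\ti\B}$ on $[C,D]$ and $f_n(\ti\B)<L_{\al_1}(\ti\B)$; Lemmas \ref{lemma21} and \ref{lemma22} then allow one to pass to the limit at the end.

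For each $n$ I track two marked fronts of the front-tracking solution $u_n$: a front $l_n(\cdot)$ issuing from $A$, separating $\{u_n\in[\al_1,\al_2]\}$ from a middle strip $\{u_n\in[\al_1,\ti\al]\}$, and a front $L_n(\cdot)$ issuing from $B$, separating that middle strip from $\{u_n\in[\B_2,\B_1]\}$. The heart of the argument, and the step I expect to be the main obstacle, is to show that this three-strip picture survives every wave interaction: since $u_-$ and $\bar u_0$ take values in the convex branch $[\al_1,\ti\al]\subset(-\f,C]$, nothing escapes the left strip across $l_n$; and since $u_+$ takes values in the concave branch $[\B_2,\B_1]\subset[D,\f)$ while $f$ lies above the chord $L_{\ti\al,\ti\B}$ on $[C,D]$, the Oleinik resolution of the large jump at $B$ and of every later interaction feeding $L_n$ keeps the intermediate values inside $[\al_1,\ti\al]$ and everything to the right of $L_n$ inside $[\B_2,\B_1]$. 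Along the way I record the uniform speed bounds $m_0\le \frac{dl_n}{dt}\le m_1$, with $m_0=f^\p(\al_1)$, $m_1=f^\p(\ti\al)$, and $\xi_0\le \frac{dL_n}{dt}\le \xi_1$, with $\xi_1=\max\{\frac{f(p)-f(q)}{p-q}:\ p\in[\al_1,\ti\al],\ \B_2\le q\le\B_1\}$.

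The decisive quantitative point is the strict inequality $m_0>\xi_1$: the tangent hypothesis $f(\ti\B)<L_{\al_1}(\ti\B)$ says the chord from $(\al_1,f(\al_1))$ to $(\ti\B,f(\ti\B))$ has slope strictly below $f^\p(\al_1)$, and combining this with the convexity of $f$ on $[\al_1,C]$ and its concavity on $[D,\f)$ — an elementary convex/concave comparison of the kind collected in the appendix (Lemma \ref{lemma41}) — forces every chord slope from $[\al_1,\ti\al]$ to $[\B_2,\B_1]$ to stay below $f^\p(\al_1)$, i.e.\ $\xi_1<m_0$. Hence the lines $A+m_0t$ and $B+\xi_1t$ meet at $\ti{T}_0=\frac{B-A}{m_0-\xi_1}$, so $l_n$ and $L_n$ must collide at some interaction time $T_{n_{k_0}}\le\ti{T}_0$, at a point $x_n$ bounded uniformly by the speed estimates; after the collision the two outer strips abut along a single Lipschitz front, namely the continuation of $L_n$, with $u_n\in[\al_1,\al_2]$ on its left and $u_n\in[\B_2,\B_1]$ on its right for all larger times. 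Finally, by $L^1_{\mbox{loc}}$-contraction, Lemma \ref{lemma21} and the Arzela-Ascoli theorem, a subsequence yields $u_n\rr u$, $x_n\rr x_0$, $T_{n_{k_0}}\rr T_0\le\ti{T}_0$ and $L_n\rr r(\cdot)$ a Lipschitz curve; letting $n\rr\f$ in the three-strip inclusions and in the collision-time bound gives (\ref{471}) with $\g=\frac{1}{m_0-\xi_1}$ and the dichotomy (\ref{472}). A last $L^1$-contraction argument removes the piecewise-constant assumption on $u_0$; since $m_0,\xi_1$, and hence $\ti{T}_0$, $\g$ and $x_0$, are controlled only through $||u_0||_\f$ and $f^\p(\al_0)-f^\p(\al_1)$, the claimed dependence of the constants follows.
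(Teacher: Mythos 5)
Your proposal follows essentially the same route as the paper's own proof: the same reduction to case (i), the same piecewise-affine flux approximation with the prescribed corner points, the same two tracked fronts $l_n$, $L_n$ with the three-strip inclusions and speed bounds, the same key inequality $m_0>\xi_1$ forcing collision by time $\ti{T}_0=\frac{B-A}{m_0-\xi_1}$, and the same limit passage via $L^1$-contraction and Arzela--Ascoli. If anything, you supply slightly more justification than the paper does for why $\xi_1<m_0$ and why the three-strip structure survives interactions, which the paper leaves implicit.
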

\noindent{\it Convex modification of $f$:} Define 
\be\h{322}
\ti{f}(p)=\left\{\begin{array}{lll}
f(p) &\mbox{if}& p\leq C,\\
(p-C)^2+f^\p(C)(p-C)+f(C) &\mbox{if}& p>C.
\end{array}\right.
\ee
Then $\ti{f}$ is a $C^1$ convex function with $\ti{f}=f$ in $(-\f,C]$ and 
\be\h{323}
\lim\limits_{|p|\rr \f}\frac{\ti{f}(p)}{|p|}=\f.
\ee
\begin{lemma}\h{lemma31}
Let $\al_0$ be as in (\ref{458}), $m\leq \al_0$ and assume that $\bar{u}_0$ satisfies 
\be\h{325}
\bar{u}_0(x)=m,
\ee
then there exist $(x_0,T_0)\in \R\times(0,\f), \g>0$ and a Lipschitz $r(\cdot)$ depending only on $||u_0||_\f$ and hypothesis (\ref{123}), (\ref{126}) such that 
\be\h{326}
r(T_0)=x_0, T_0\leq \g|A-B|, \h{326}\ee
\be\h{327}
u(x,t)\in \begin{cases}
 [\al_1,\al_2] \mbox{ if } x<r(t),\\
 [\B_2,\B_1] \mbox{ if } x>r(t).
\end{cases}
\ee
\begin{proof}
Proof is lengthy and we use the structure Theorem. Let $g(p)=\ti{f}(p)$ be the convex modification of $f$ as in (\ref{322}).
\par First assume that $u_0$ is piecewise constant with finite number of discontinuities. Let 
$$E=\{\al_1,\al_2,\B_1,\B_2,C,D\}\cup\{\mbox{points of discontinuities of}\ u_0\}.$$
Let $\{f_n\}$ and $\{g_n\}$ be sequences of continuous piecewise affine functions such that $g_n$ is convex satisfying  (see Lemma \ref{lemma41})

\begin{itemize}
\item [i.] $(f_n,g_n)\rr (f,g)$ in $C^0_{\mbox{loc}}(\R)$ as $n\rr\f.$
\item [ii.] $f_n=g_n$ in $(-\f,C]$.
\item [iii.] $\displaystyle\lim\limits_{|p|\rr \f}\inf\limits_{n}\frac{g_n(p)}{|p|}=\f.$
\item [iv.] For all $x$, $E\subset \{\mbox{corner points of }\ f_n\}$.
\item [v.] $\al_0,m$ lies in the interior of degenerate points of $f_n$ for all $n$.  
\item [vi.] $f_{n}(\T)<L_{\al_2, \B_2}(\T)$ for $\T\in[C,D]$.
\item [vii.] $f_n(\B_2)=L_{\al_0}(\B_2).$
\item [viii.] $I=[-||u_0||_\f,||u_0||_\f]$ and for all $n$,
$$\max(Lip(f_n,I), Lip(g_n,I))\leq 2 \max(Lip(f,I), Lip(g,I))=J.$$
\end{itemize}
Let 
\begin{eqnarray*}
w_0=\left\{\begin{array}{lll}
u_0(x) &\mbox{if}& x<B,\\
\al_0 &\mbox{if}& x>B,
 \end{array}\right.
\end{eqnarray*}
and $u_n,w_n,w$ be the solutions of (\ref{11}), (\ref{12}) with respective  fluxes $f_n,g_n,g$ and initial data $u_0,w_0,w_0$.
Define
\begin{eqnarray*}
\xi_0^n&=&\min\left\{\frac{f_n(p)-f_n(q)}{p-q}: p\in [m,\al_0], q\in [\al_1,\al_2]\right\}.\\
\xi_1^n&=&\max\left\{\frac{f_n(p)-f_n(q)}{p-q}: p\in [m,\al_0], q\in [\al_1,\al_2]\right\}.\\
\eta_{n_1}&=& \frac{g_n(u_0(A-))-g_n(m)}{u_0(A-)-m}.\\
l_n(t)&=&B+\eta_{n_1}t, \  r(t)=B+f^\p(m)t, \ R(t)=B+f^\p(\al_0)t.
\end{eqnarray*}
Then from (viii) we have for $i=0,1$, $|\xi_i^n|,  |\eta_{n_1}|$ are bounded by $J$ and from (v) to (vii), 
$f^\p(m)<\xi_0=\inf\limits_n \xi_0^n\leq \eta_{n_1}.$ Hence the line $r(\cdot)$ and $A+\xi_0t$ meet at $\ti{T}>0$ given by 
\be\h{329}
\ti{T}=\frac{B-A}{\xi_0-f^\p(m)}.
\ee
 Let $T_{n_1}$ be the first time of interaction of waves of $w_n$. Let $y_\pm$ be the extreme characteristic points with respect to $w_0$ and $g_n$. Then for $0<t<T_{n_1}$, $w_n$ satisfies 
 \begin{eqnarray}\h{330}
 \left\{\begin{array}{lll}
 w_n(x,t)\in [\al_1,\al_2] \ \mbox{if}\ x<l_n(t),\\
 w_n(x,t) = u_n(x,t) \ \mbox{if}\ x<r(t),\\
 w_n(x,t)=\left\{\begin{array}{llll}
 m \ \mbox{if}\ l_n(t)<x<r(t),\\
 \mbox{rarefaction from}\ m \ \mbox{to}\ \al_0\ \mbox{if}\ r(t)<x<R(t),\\
 \al_0 \ \mbox{if}\ x>R(t),
 \end{array}\right.
 \end{array}\right.
 \end{eqnarray}
From (\ref{441}) to (\ref{454})
\be\h{331}
\left\{\begin{array}{lllll}
y_-(R(t),t)=A=y_+(R(t),t),\\
y_-(l_n(t),t)\leq A\leq y_+(l_n(t),t),\\
y_-(r(t),t)\leq B \leq y_+(r(t),t),\\
\displaystyle\left|\frac{d l_n}{dt}\right|\leq J, \displaystyle\ \frac{d l_n}{dt}\geq \xi_0>f^\p(m).
\end{array}\right.
\ee
Let $(A_{n_1}, B_{n_1})=(l_{n_1}(T_{n_1}),r(T_{n_1})).$ Clearly $A_{n_1}\leq B_{n_1}$. If $A_{n_1}<B_{n_1}$, again starting the front tracking from $T_{n_1}$, let $T_{n_2}>T_{n_1}$ be the first time of interaction of waves for $w_n$. Let 
\be
&&\eta_{n_2}=\frac{g(u(A_{n_1}-,T_{n_1}))-g_n(m)}{u(A_{n_1}-,T_{n_1})-m},\h{332}\\
&&l_n(t)=A_{n_1}+\eta_{n_2}(t-T_{n_2})\ \mbox{for}\ T_{n_1}\leq t \leq T_{n_2}.
\ee
Then $w_n$ satisfies (\ref{330}) and for  $T_{n_1}\leq t \leq T_{n_2},$
\be\h{333}
\left\{\begin{array}{llll}
y_-(l_n(t),T_{n_1},t)\leq A_{n_1}\leq y_+(l_n(t), T_{n_1},t),\\
y_-(r(t),T_{n_1},t)\leq B_{n_1}\leq y_+(l_n(t), T_{n_1},t),\\
f^\p(m)<\xi_0\leq \frac{d l_n}{dt}.
\end{array}\right.
\ee 
Hence from (\ref{219}) for $T_{n_1}\leq T_{n_2}$, 
\be\h{334}
y_-(l_n(t),t)\leq A \leq y_+(l_n(t),t),\\
y_-(r(t),t)\leq B \leq y_+(r(t),t),\\
y_-(R(t),t)=B=y_-(R(t),t),
\ee
and from (\ref{333}), $l_n(t)\geq A+t\xi_0$ for $t\in [0,T_{n_2}],$ continuing the front tracking one can get $T_{n_k}>T_{n_{k-1}}>\cdots>T_{n_1}$ and $A_{n_k}<B_{n_k}$, $l_n(\cdot)$ such that $w_n$ satisfies (\ref{330}), (\ref{331}) for $0<t<T_{n_k}$. Since $l_n(t)\geq A+t\xi_0$, from (\ref{329}), there exists a $k_0$ such that $x_n=A_{n_{k_0}}=B_{n_{k_0}}$ and $l_n(T_{n_k})=r(T_{n_{k_0}}),$ $T_{n_{k_0}}\leq \ti{T}=\frac{B-A}{\xi_0-f^\p(m)}$. Let $\{m<v_0^n<v_1^n<\cdots<v_p^n<\al_0\}$ be the corner points of $f_n$ in $[m,\al_0].$ Then $w_n$ satisfies 
\begin{eqnarray*}
\left\{\begin{array}{llll}
w_n(x,T_{n_{k_0}})&\in& [\al_1,\al_2], \ \mbox{if}\ x<x_n,\\
w_n(x,T_{n_{k_0}})&=&\left\{\begin{array}{llll}
\al_0 \ \mbox{if}\ x>R(T_{n_{k_0}}),\\
\mbox{rarefaction from}\ v_1^n \ \mbox{to}\ v_k^n\ \mbox{if}\ x_n<x<R(T_{n_{k_0}}).
\end{array}\right.\end{array}\right.
\end{eqnarray*}
Hence from the front tracking Lemma and (\ref{219}), we can extend the function $l_n(t)$ to a maximal time $T_{n_{k_0}}\leq \ti{T}_n\leq \f$ such that 
\be\h{335}
\left\{\begin{array}{lll}
l_n(t)\leq R(t) &\mbox{for}& t<\ti{T}_n,\\
l_n(\ti{T}_n)=R(\ti{T}_n) &\mbox{if}& \ti{T}_n<\f.
\end{array}\right.
\ee
\begin{eqnarray}\h{336}
\left\{\begin{array}{llll}
w_n(x,t)&\in& [\al_1,\al_2], \ \mbox{if}\ x<l_n(t),\\
w_n(x,t)&=&\left\{\begin{array}{llll}
\al_0 \ \mbox{if}\ x>R(t),\\
\mbox{rarefaction from}\ v_s^n \ \mbox{to}\ v_k^n\ \mbox{if}\ l_n(t)<x<R(t),
\end{array}\right.\end{array}\right.
\end{eqnarray}
where $s$ depends on $t$ and is a non decreasing function of $t$, with
\be\h{337}
y_-(l_n(t),t)\leq A \leq y_+(l_n(t),t),\\
y_-(R(t),t)\leq B \leq y_+(R(t),t),\\
\left|\frac{d l_n}{dt}\right| \leq J.
\ee
Hence for a subsequence, let $\ti{T}_n\rr T_0, l_n\rr l$ in $C_{\mbox{loc}}^0(\R)$, $w_n\rr w$ in $L^1_{\mbox{loc}}(\R^n\times(0,\f))$, where $T_0\in [0,\f]$. Then from (\ref{335}), for $\lim\limits_{n\rr \f} T_{n_{k_0}}\leq t <T_0$, $w$ satisfies 
\begin{eqnarray*}
w(x,t)\in [\al_1,\al_2] &\mbox{if}& x<l(t),\\
w(x,t)=\al_0 &\mbox{if}& x>R(t).
\end{eqnarray*}
From (5), (6), (7) of Lemma \ref{lemma23} and from (\ref{336}), 
\be\h{338}
y_-(l(t),t)\leq A \leq y_+(l(t),t),\\
y_-(R(t),t)\leq B \leq y_+(R(t),t),
\ee
where $y_\pm$ are the extreme characteristic points  corresponds to $g$ and $w_0$. Hence by definition 
\be\h{339}
\left\{\begin{array}{lll}
R_-(t,A)\leq l(t) \leq R_+(t,A),\\
R_-(t,B)\leq R(t) \leq R_+(t,B).
\end{array}\right.
\ee
Since $f^\p(\al_0)<f^\p(\al_1)$ for any $\al\in [\al_1,\al_2]$,  from (1) of structure Theorem,
 $R_-(\cdot, A)$ and $R_+(\cdot, B)$ meet at $(\ti{x}_0,\ti{T}_0)$ with $\ti{x}_0=R_-(\ti{T}_0, A)=R_+(\ti{T}_0, B)$. 
\be\h{340}
\begin{array}{llll}
&\mbox{Hence } l(\cdot) \mbox{ and } R(\cdot) \mbox{ meets at }\\
 &T_0\leq \ti{T}_0\leq \g |A-B| \mbox{ and for } x_0=l(T_0)=R(T_0),\\
&w(x,\ti{T}_0)\in\left\{\begin{array}{lll}
\ [\al_1,\al_2] \ \mbox{if}\ x<x_0,\\
\ \al_0 \ \mbox{if} \ x>x_0.
\end{array}\right.
\end{array}
\ee
Therefore from (\ref{335}), choose $n_0>0$ such that for $n\geq n_0$, 
\be\h{341}
\ti{T}_n\leq (\ti{\g}+1)|A-B|.
\ee
Now coming  back to $u_n$, let 
\begin{eqnarray*}
\T_{n}&=&\frac{f_n(u_0(B+))-f_n(m)}{u_0(B+)-m},\\
L_n(t)&=&B+\T_{n}t.
\end{eqnarray*}
Then from (v) to (viii), 
$$\T_{n}\leq f^\p( \al_0),\ |\T_{n_1}|\leq J.$$
Hence $L_{n}(t)\leq R(t).$ Let $T_1^n$ be the first time of interaction of the waves of $u_n$. Let $l_n$ be as in the previous case. Then for $0<t<T^n_1$,
\begin{eqnarray*}
\left\{\begin{array}{llll}
u_n(x,t)&\in& [\al_1,\al_2], \ \mbox{ if } x<l_n(t),\\
u_n(x,t)&\in& [\B_2,\B_1],\ \mbox{ if } x>L_n(t),\\
u_n(x,t)&=&\left\{\begin{array}{llll}
m \ \mbox{if}\ l_n(t)<x<r(t),\\
\mbox{rarefaction from}\ m \ \mbox{to}\ v_s^n\ \mbox{for some}\ s\ \mbox{depending on}\ t \\ \mbox{if}\ r(t)<x<L_n(t),
\end{array}\right.\\
\displaystyle\left|\frac{d L_n}{dt}\right|&\leq& J,\ L_n(t)\leq R(t).
\end{array}\right.
\end{eqnarray*}
From the front tracking Lemma and (\ref{341}), this process can be continued till a time $T_0^n\leq \ti{T}_n\leq (\ti{\g}+1)|A-B|$
\begin{eqnarray*}
x_n=L_n(T_0^n)=l_n(T_0^n),\\
R(t)\geq L_n(t)>l_n(t) \ \mbox{for}\ t<T_0^n.
\end{eqnarray*}
Then $u_n$ satisfies 
\begin{eqnarray*}
u_n(x,T_0^n)\in\begin{cases}
 [\al_1,\al_2] \mbox{ if } x<x_n,\\
 [\B_2,\B_1] \mbox { if } x>x_n.
\end{cases}
\end{eqnarray*}
Now letting a subsequence $n\rr \f$ and from Lemma \ref{lemma21} with $\g=\ti{\g}+1$, $T_0^n\rr T_0$, $x_n\rr x_0$, $T_0$ satisfies (\ref{326}) and 
\begin{eqnarray*}
u(x,T_0)\in\begin{cases}
 [\al_1,\al_2] \mbox{ if } x<x_0,\\
 [\B_2,\B_1] \mbox{ if } x>x_0.
\end{cases}
\end{eqnarray*}
Now from Lemma (\ref{lemma47}), $r(\cdot)$ exist satisfying (\ref{326}) and (\ref{327}).
\par For  a general $u_0$, approximate $u_0$ by piecewise constant function in $L^1_{\mbox{loc}}$ norm and by $L^1_{\mbox{loc}}$ contraction, (\ref{326}) and (\ref{327}) follows. This proves the Lemma. 
\end{proof}
 \end{lemma}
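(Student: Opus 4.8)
The plan is to reduce, by $L^1_{\mathrm{loc}}$ approximation and $L^1_{\mathrm{loc}}$ contraction, to the case that $u_0$ is piecewise constant with finitely many jumps, and to replace the genuinely nonlinear part of $f$ by piecewise affine approximants while carrying along a convexified companion flux. Concretely I would use the convex modification $g=\tilde f$ of $(\ref{322})$ --- which agrees with $f$ on $(-\infty,C]$ and is superlinear at infinity --- together with sequences $f_n,g_n$ of continuous piecewise affine fluxes such that $f_n=g_n$ on $(-\infty,C]$, $g_n$ is convex, $(f_n,g_n)\to(f,g)$ in $C^0_{\mathrm{loc}}$, $g_n$ has uniform superlinear growth, the corner points of $f_n$ contain $\{\alpha_1,\alpha_2,\beta_1,\beta_2,C,D\}\cup\{\text{jumps of }u_0\}$, the points $m$ and $\alpha_0$ lie interior to a degenerate segment of $f_n$, all fluxes are uniformly Lipschitz on a fixed compact $I$ containing the range of the solution, and $f_n<L_{\alpha_2,\beta_2}$ on $[C,D]$ with $f_n(\beta_2)=L_{\alpha_0}(\beta_2)$. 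The decisive auxiliary object is the truncated datum $w_0$ equal to $u_0$ for $x<B$ and to $\alpha_0$ for $x>B$; I first analyze the \emph{convex} problem with flux $g_n$ and datum $w_0$ to extract a time bound, and only afterwards transfer it to $u_n$ with flux $f_n$ and the original datum $u_0$.

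For the convex auxiliary problem I run front tracking. Near $x=A$ the datum drops from $[\alpha_1,\alpha_2]$ to $m$, producing a shock front $l_n(\cdot)$; near $x=B$ it rises from $m$ to $\alpha_0$, producing a rarefaction fan confined between the characteristic lines $r(t)=B+f'(m)t$ and $R(t)=B+f'(\alpha_0)t$. Since $m\le\alpha_0<\alpha_1$ and $g_n$ is convex, the shock speed obeys $\dot l_n\ge\xi_0>f'(m)$, with $\xi_0=\inf_n\min\{(f_n(p)-f_n(q))/(p-q):\,p\in[m,\alpha_0],\,q\in[\alpha_1,\alpha_2]\}$; hence $l_n$ overtakes $r$ and, by induction over the finitely many interaction times (Lemma \ref{lemma22}), keeps consuming the fan until it meets $R$ at a maximal time $\tilde T_n$, throughout which $w_n$ has the form ``$[\alpha_1,\alpha_2]$ left of $l_n$, a rarefaction between $l_n$ and $R$, $\alpha_0$ right of $R$'', with $l_n$ obeying a uniform Lipschitz bound. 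Passing to the limit via Lemma \ref{lemma21} and parts (5)--(7) of the Stability Lemma \ref{lemma23} gives $w_n\to w$, $l_n\to l$, and the sandwich $R_-(t,A)\le l(t)\le R_+(t,A)$, $R_-(t,B)\le R(t)\le R_+(t,B)$ for the flux $g$. Because $f'(\alpha_0)<f'(\alpha_1)\le f'(\alpha)$ for all $\alpha\in[\alpha_1,\alpha_2]$ --- the strict inequality forced by $(\ref{126})$ --- the shock case of the Structure Theorem \ref{theorem28} applies to $g$ (with $[\alpha_1,\alpha_2]$ in the role of the left interval and $\{\alpha_0\}$ in the role of the right): $R_-(\cdot,A)$ and $R_+(\cdot,B)$ meet at some $(\tilde x_0,\tilde T_0)$ with $\tilde T_0\le\gamma|A-B|$, $\gamma$ depending only on $\|u_0\|_\f$ and the hypothesis data. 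Hence $l$ and $R$ meet no later than $\tilde T_0$, and $\tilde T_n\le(\tilde\gamma+1)|A-B|$ for $n$ large.

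I then return to $u_n$ with flux $f_n$ and datum $u_0$. Near $x=B$ the datum jumps from $m$ up to $u_0(B+)\in[\beta_2,\beta_1]$, and since the corresponding speed $\Theta_n=(f_n(u_0(B+))-f_n(m))/(u_0(B+)-m)\le f'(\alpha_0)$, the right front $L_n$ stays below $R$, inside the band controlled above; the left front is again $l_n$, governed by $f_n=g_n$ on the range $(-\infty,\alpha_2]$ occurring to its left. Front tracking together with the bound $\tilde T_n\le(\tilde\gamma+1)|A-B|$ forces $l_n$ and $L_n$ to meet at a time $T_0^n\le(\tilde\gamma+1)|A-B|$ at a point $x_n$, with $u_n(\cdot,T_0^n)\in[\alpha_1,\alpha_2]$ for $x<x_n$ and $\in[\beta_2,\beta_1]$ for $x>x_n$. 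Letting $n\to\infty$ by Lemma \ref{lemma21} yields $T_0^n\to T_0$, $x_n\to x_0$, $(\ref{326})$, and $u(\cdot,T_0)\in[\alpha_1,\alpha_2]$ on $x<x_0$, $\in[\beta_2,\beta_1]$ on $x>x_0$; Lemma \ref{lemma47} then produces the Lipschitz curve $r(\cdot)$ with $r(T_0)=x_0$ and $(\ref{327})$ for $t>T_0$. The case of general $u_0$ follows by approximating in $L^1_{\mathrm{loc}}$ by piecewise constant data and using $L^1_{\mathrm{loc}}$ contraction, all constants depending only on $\|u_0\|_\f$ and $(\ref{123})$, $(\ref{126})$.

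The \textbf{main obstacle} I expect is the bookkeeping in the front-tracking step of the convex auxiliary problem: checking that the left shock front genuinely overtakes the entire rarefaction fan in finitely many interactions and that the speed bound $\dot l_n\ge\xi_0>f'(m)$ survives every interaction, and then handling the two approximation limits in the correct order --- first $n\to\infty$ in the flux approximation, using the characteristic-point convergence of Lemma \ref{lemma23} in place of strict convexity (which is unavailable here), and only afterwards the piecewise-constant approximation of $u_0$ --- while keeping $\gamma,x_0,T_0$ uniform and ensuring the states to the left of $l_n$ remain in the range $(-\infty,\alpha_2]$ where $f_n=g_n$, so that the convex Structure Theorem is genuinely applicable.
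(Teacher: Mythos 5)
Your proposal is correct and follows essentially the same route as the paper: the same convex modification $g=\ti{f}$, the same piecewise affine approximants $f_n,g_n$ with $m,\al_0$ interior to degenerate segments, the same auxiliary datum $w_0$ analyzed first with the convex flux to extract the uniform time bound via the shock case of Theorem \ref{theorem28}, the same transfer back to $u_n$ using $L_n\leq R$, and the same final appeal to Lemma \ref{lemma47} and $L^1_{\mbox{loc}}$ contraction. No substantive differences to report.
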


\begin{proof}[Proof of the Theorem for convex-concave type]
Let $$m=\min\{\al_1,\inf \bar{u}_0\},\ M=\max\{\B_1, \sup \bar{u}_0\}.$$
\begin{eqnarray*}
u_{0,m}(x)=\left\{\begin{array}{llll}
u_0(x) &\mbox{if}& x\notin(A,B),\\
m &\mbox{if}& x\in(A,B),
\end{array}\right.
\end{eqnarray*}
\begin{eqnarray*}
u_{0,M}(x)=\left\{\begin{array}{llll}
u_0(x) &\mbox{if}& x\notin(A,B),\\
M &\mbox{if}& x\in(A,B),
\end{array}\right.
\end{eqnarray*}
and $u_m, u_M$ be the solutions of (\ref{11}), (\ref{12}) with respective initial data $u_{0,m}$ and $u_{0,M}$. Then from Lemma \ref{lemma31}, Lemma \ref{lemma47}, Lemma \ref{lemma48} there exist $\g>0$, $T_1\leq \g|A-B|$, $A_1, B_1$ such that 
\begin{eqnarray*}
u_m(x,T_1)\in\begin{cases}
[\al_1,\al_2] \mbox{ if } x<A_1,\\
 [\B_2,\B_1] \mbox{ if } x>A_1.
\end{cases}
\end{eqnarray*}
\begin{eqnarray*}
u_M(x,T_1)\in\begin{cases}
 [\al_1,\al_2] \mbox{ if } x<B_1,\\
[\B_2,\B_1] \mbox{ if } x>B_1.
\end{cases}
\end{eqnarray*}
Since $u_{0,m}\leq u_0\leq u_{0,M}$, thus $u_m\leq u \leq u_M$. Therefore $A_1\leq B_1$ and 
\begin{eqnarray}\h{1327}
u(x,T_1)\in\begin{cases}
 [\al_1,\al_2] \mbox{ if } x<A_1,\\
 [\al_1,\B_1] \mbox{ if } A_1<x<B_1,\\
 [\B_2,\B_1] \mbox{ if } x>B_1.
\end{cases}
\end{eqnarray}
Therefore, without loss of generality we can assume that 
\be\h{1328}
\bar{u}_0(x)\in [\al_1,\B_1] \ \mbox{for}\ x\in (A,B).
\ee
From (\ref{118}), (\ref{125}), (\ref{126}), choose $\e>0$ such that 
\be\h{1329}
\left\{\begin{array}{lllllll}
(\B_1-\B_2)+(\al_2-\al_1)<\e,\\
f(\T)>L_{\al_1+\e, \B_1-\e}(\T)\ \mbox{for}\ \T\in [C,D],\\
L_{\al_1}(\B_1-\e)>f(\B_1-\e).
\end{array}\right.
\ee
Define 
\begin{eqnarray*}
u_{0,1}=\left\{\begin{array}{llll}
u_0(x) &\mbox{if}& x\notin(A,B), \ \mbox{or}\ u_0(x)\leq \al_1+\frac{\e}{2},\\
\al_1+\frac{\e}{2} &\mbox{if}& x\in(A,B), \ \mbox{and}\ u_0(x)\geq \al_1+\frac{\e}{2},\\
\end{array}\right.
\end{eqnarray*}
\begin{eqnarray*}
u_{0,2}=\left\{\begin{array}{llll}
u_0(x) &\mbox{if}& x\notin(A,B), \ \mbox{or}\ u_0(x)> \B_1-\frac{\e}{2},\\
\B_1-\frac{\e}{2} &\mbox{if}& x\in(A,B), \ \mbox{and}\ u_0(x)\leq \B_1-\frac{\e}{2}.\\
\end{array}\right.
\end{eqnarray*}
Let $u_1$ and $u_2$ be the solutions of (\ref{11}), (\ref{12}) with respective initial data $u_{0,1}$ and $u_{0,2}$. Then from (\ref{1329}), Lemma \ref{lemma48}, there exist $\g>0, T_1>0, A_1, B_1$ such that 
$$T_1\leq \g|A-B|,$$
\begin{eqnarray}\h{1330}
u_1(x,T_1)\in\begin{cases}
 [\al_1,\al_2] \mbox{ if } x<A_1,\\
 [\B_2,\B_1] \mbox{ if } x>A_1.
\end{cases}
\end{eqnarray}
\begin{eqnarray}\h{1331}
u_2(x,T_1)\in\begin{cases}
 [\al_1,\al_2] \mbox{ if } x<B_1,\\
 [\B_2,\B_1] \mbox{ if } x>B_1.
\end{cases}
\end{eqnarray}
Since $u_{0,1}\leq u_0\leq u_{0,2}$, consequently  $u_1\leq u \leq u_2$ and therefore from (\ref{1330}), (\ref{1331}), $A_1\leq B_1$ and 
\begin{eqnarray*}
u(x,T_1)\in\begin{cases}
 [\al_1,\al_2] \mbox{ if } x<A_1,\\
 [\al_1,\B_1] \mbox{ if } A_1<x<B_1,\\
 [\B_2,\B_1] \mbox{ if } x>B_1.
\end{cases}
\end{eqnarray*}
Furthermore, from $L^1$ contraction, 
\begin{eqnarray*}
(\B_2-\al_2)|A_1-B_1|&\leq& \int\limits_{A_1}^{B_1}|u_1(x,T_2)-u_2(x,T_2)|dx\\
&\leq& \int\limits_A^B |u_{0,1}(x)-u_{0,2}(x)|dx\\
&\leq & (\B_1-\al_1-\e)|A-B|.
\end{eqnarray*}
This gives $$|A_1-B_1|\leq \frac{(\B_1-\al_1-\e)}{(\B_2-\al_2)}|A-B|=\delta |A-B|,$$
where $\delta=\frac{(\B_1-\al_1-\e)}{(\B_2-\al_2)}<1$ from (\ref{329}).
\par Choose $$\al_1\leq \al_2<\xi_1<C<D<\xi_2<\B_2\leq \B_1,$$ 
such that 
\begin{eqnarray*}
\left\{\begin{array}{lll}
f(\T)>\max\{L_{\al_1,\xi_2}(\T), L_{\xi_1,\B_1}(\T)\} \ \mbox{for}\ \T\in [C,D],\\
f(\xi_2)<L_{\al_1}(\xi_2), \end{array}\right.
\end{eqnarray*}
and define 
\begin{eqnarray*}
w_{0,1}=\left\{\begin{array}{llll}
\al_1 &\mbox{if}& x<B,\\
\xi_2 &\mbox{if}& x>B.
\end{array}\right.
\end{eqnarray*}
\begin{eqnarray*}
w_{0,2}=\left\{\begin{array}{llll}
\xi_1 &\mbox{if}& x<A,\\
\B_1 &\mbox{if}& x>A.
\end{array}\right.
\end{eqnarray*}
Let $w_1,w_2$ be the solutions  of (\ref{11}), (\ref{12}) with respective initial data are given by 
\begin{eqnarray*}
w_1(x,t)=\left\{\begin{array}{lll}
\al_1 &\mbox{if}& x<B+\displaystyle\frac{f(\al_1)-f(\xi_2)}{\al_1-\xi_2}t=\rho_1(t),\\
\xi_1 &\mbox{if}& x>\rho_1(t),
\end{array}\right.
\end{eqnarray*}
\begin{eqnarray*}
w_2(x,t)=\left\{\begin{array}{lll}
\xi_2 &\mbox{if}& x<A+\displaystyle\frac{f(\xi_1)-f(\B_2)}{\xi_1-\B_2}t=\rho_2(t),\\
\B_1 &\mbox{if}& x>\rho_2(t).
\end{array}\right.
\end{eqnarray*}
Since $w_{0,1} \leq u_{0,1} \leq u_0 \leq u_{0,2} \leq w_{0,2}$, thus $w_1\leq u_1 \leq  u \leq u_2 \leq  w_2$. Hence  $\rho_2(T_1)\leq A_1\leq B_1\leq \rho_1(T_2)$.
\par By induction we can find $T_n>T_{n-1}>\cdots >T_1>0$, $\rho_2(T_n)\leq A_n\leq B_n\leq \rho_1(T_n)$ such that 
\begin{eqnarray*}
|A_n-B_n|&\leq&  \delta |A_{n-1}-B_{n-1}|\leq \delta^n|A-B|\\
T_n&\leq & T_{n-1}+\g|A_{n-1}-B_{n-1}|\\
&\leq& \g(1+\delta+\delta^2+\cdots+ \delta^{n-1})|A-B|,
\end{eqnarray*}
\begin{eqnarray*}
u(x,T_n)\in\begin{cases}
 [\al_1,\al_2] \mbox{ if } x<A_n,\\
 [\al_1,\B_1] \mbox{ if } A_n<x<B_n,\\
 [\B_2,\B_1] \mbox{ if } x>B_n.
\end{cases}
\end{eqnarray*}
Hence  $\{T_n\}$ converges to $T_0$, $x_0=\lim A_n =lim B_n$, 
$$T_0\leq \frac{\g}{1-\delta}|A-B|$$
and 
\begin{eqnarray*}
u(x,T_0)\in\begin{cases}
[\al_1,\al_2] \mbox{ if } x<x_0,\\
 [\B_2,\B_1] \mbox{ if } x>x_0.
\end{cases}
\end{eqnarray*}
Hence from Lemma \ref{lemma47}, there exist a Lipschitz curve $r(\cdot)$ with $r(T_0)=x_0$ such that for $t>T_0$
\begin{eqnarray*}
u(x,T_0)\in\begin{cases}
[\al_1,\al_2] \mbox{ if } x<r(t),\\
 [\B_2,\B_1] \mbox{ if } x>r(t).
\end{cases}
\end{eqnarray*}
This proves the Theorem.
\end{proof}

\noindent\textbf{Counter examples:}
\begin{itemize}
\item [1.] Let $f$ be a super linear function with two inflection points $C<D$. Then $(f,C,D)$ is a convex-convex type triplet. Let $x_0\in (C,D)$ such that $f(x_0)=\max\{f(\T): \T\in [C,D]\}.$ Let 
$\al_0<C<x_0<D<\B_0$ such that 
$f(\al_0)=f(x_0)=f(\B_0)$ and 
\begin{eqnarray*}
u_0(x)=\left\{\begin{array}{lll}
\B_0 &\mbox{if}& x<A,\\
x_0 &\mbox{if}& x\in(A,B),\\
\al_0 &\mbox{if}& x>B.\end{array}\right.
\end{eqnarray*}
Then the solution $u$ to (\ref{11}), (\ref{12}) is given by $u(x,t)=u_0(x)$ for all $(x,t)\in \R\times(0,\f)$, which is not a single shock solution.
\item [2.] Let $f$ be a superlinear function with one inflection point $x_0$. Let $C<x_0<D$, then $(f,C,D)$ is a convex-concave flux triplet. Let $\al_1<\al_2<C<D<\B$ is such that 
$$f(\B)=L_{\al_2}(\B), f^\p(\al_1)<f^\p(\al_2).$$
Let 
\begin{eqnarray*}
u_0(x)=\left\{\begin{array}{lll}
\al_1 &\mbox{if}& x<A,\\
\al_2 &\mbox{if}& x\in(A,B),\\
\B &\mbox{if}& x>B.\end{array}\right.
\end{eqnarray*}
Let $$l_1(t)=A+f^\p(\al_1)t,\ l_2(t)=A+f^\p(\al_2)t, l_3(t)=B+f^\p(\al_2)t.$$
Then the solution $u$ of (\ref{11}), (\ref{12}) is given by 
\begin{eqnarray*}
u(x,t)=\left\{\begin{array}{llll}
\al_1 &\mbox{if}& x<l_1(t),\\
(f^\p)^{-1}\left(\frac{x-A}{t}\right) &\mbox{if}& l_1(t)<x<l_2(t),\\
\al_2 &\mbox{if}& l_2(t)<x<l_3(t),\\
\B &\mbox{if}& x>l_3(t),
\end{array}\right.
\end{eqnarray*}
which is not a shock solution.
\end{itemize}
\section{Appendix}
\setcounter{equation}{0}
For the sake of completeness, we will prove some of the Lemmas stated earlier.
\begin{proof}[Proof of Lemma \ref{lemma21}] First assume that $u_0\in BV(\R).$ Then for any $0\leq s <t,$ $TV(u_k(\cdot, t))\leq TV(u_0(\cdot))$ and 
$$\int\limits_{\R}|u_k(x,s)-u_k(x,t)|dx \leq C |s-t|TV(u_0).$$
Hence from Helly's Theorem, there exists a subsequence still denoted by $\{u_k\}$ such that $u_k\rr u$ in $L^1(\R\times[0,T])$ for any $T>0$.
\par Let $u_0\in L^\f(\R)$ and $u_{0,n}\in BV (\R)$ such that $u_{0,n}\rr u_0$ in $L^1_{\mbox{loc}}(\R)$. Let $u_k^n$ be the solution of (\ref{11}) with flux $f_k$ and initial data $u_{0,n}$. Let $m=\sup\limits_n||u_{0,n}||_\f$ and $K=[-m,m]$, $M=\sup\limits_{K}Lip(f_k,K)$. Then for $t>0$ and $L^1_{\mbox{loc}}$ contraction, we have for $T>0$, $L>0$,
\begin{eqnarray*}
\int\limits_{-L}^{L}|u_k(x,t)-u_m(x,t)|dx &\leq& \int\limits_{-L}^{L}|u_k(x,t)-u_k^n(x,t)|dx+
\int\limits_{-L}^{L}|u_k^n(x,t)-u_m^n(x,t)|dx\\
&+& \int\limits_{-L}^{L}|u_m^n(x,t)-u_m(x,t)|dx\\
&\leq & 2\int\limits_{-L-Mt}^{L+Mt}|u_0(x)-u_{0,n}(x)|dx + \int\limits_{-L}^{L}|u_k^n(x,t)-u_m^n(x,t)|dx,
\end{eqnarray*}
  hence 
  \begin{eqnarray*}
\int\limits_{0}^T  \int\limits_{-L}^{L}|u_k(x,t)-u_m(x,t)|dxdt &\leq& 2T\int\limits_{-L-MT}^{L+MT}|u_0(x)-u_{0,n}(x)|dx\\
&+& \int\limits_{0}^T  \int\limits_{-L}^{L}|u_k^n(x,t)-u_m^n(x,t)|dxdt.
  \end{eqnarray*}
Now letting $k,m\rr \f$ and $n\rr \f$ to obtain 
$$\lim\limits_{k,m\rr \f} \int\limits_{0}^T  \int\limits_{-L}^{L}|u_k(x,t)-u_m(x,t)|dxdt=0.$$
Let $u_k\rr w$ in $L^1_{\mbox{loc}}(\R\times(0,\f)).$ Since $u_k$ are uniformly bounded, thus by Dominated convergence Theorem, $w$ is the solution of (\ref{11}), (\ref{12}) and hence $w=u$. This proves the Lemma.
\end{proof}
\noindent {\it Properties of convex functions (see \cite{12}):}\\
Let $0\leq \rho\in C^\f_0((-1,1)), \e>0, \rho_\e(x)=\frac{1}{\e}\rho(x/\e)$ be a mollifying sequence. For $g\in L^1_{\mbox{loc}}(\R)$, let 
\be\h{42}
g_\e(p)=(\rho_\e*g)(p)+\e|p|^2,
\ee
then $g_\e\in C^\f(\R)$ and satisfies the following
\begin{itemize}
\item [i.] $g_\e\rr g$ in $C^k_{\mbox{loc}}(\R)$ if $g$ is in $C^k(\R).$
\item [ii.] Let $g$ be convex, then  $g_\e$ is uniformly convex and 
\be\h{43}\sup\limits_{0<\e<1}Lip (g_\e, K)<\f,\ \mbox{for any compact set}\ K\subset \R.\ee
Furthermore if $g$ is of superlinear growth, then for $0<\e\leq 1, |p|>2\e$,
\begin{eqnarray*}
\frac{g_\e(p)}{|p|}&=& \int\limits_{|q|\leq 1} \rho(q)\frac{g(p-\e q)}{|p-\e q|}\frac{|p-\e q|}{|p|}dq\\
&\geq& \left(\inf\limits_{|z-p|\leq \e} \frac{g(z)}{|z|}\right) \inf\limits_{|q|\leq 1}\left|1-\e \frac{q}{|p|}\right|\\
&\geq& \frac{1}{2} \inf\limits_{|z-p|\leq \e} \frac{g(z)}{|z|}.
\end{eqnarray*}
Hence 
\be\h{44}
\lim\limits_{|p|\rr \f}\inf\limits_{0<\e\leq 1} \frac{g_\e(p)}{|p|}=\f.
\ee
\end{itemize}
\begin{definition}\h{definition41} Let $g$ be a convex function. Then 
\begin{itemize}
\item [i.] $g$ is said to be degenerate on an interval $I=(a,b)$ if $g$ is affine on $I$. That is there exist $\al,m\in\R$ such that for all $p\in I$
\be\h{45}
g(p)=mp+\al.
\ee
\item [ii.] $g$ is said to have finite number of degeneracies if there is a finite number $L$ of disjoint intervals $I_i=(a_i,b_i)$ such that $g$ is affine on each $I_i$ and $g$ is a strictly increasing function on $\R\setminus \cup_{i=1}^L I_i$.
\item [iii.] $g$ is said to have locally finite degeneracies if there exists $a_i<a_{i+1}$, $m_{i+1}>m_i$, $\al_i\in \R$ such that 
\be\h{46}
\lim\limits_{i\rr\f} (a_i,m_i)=(\f,\f), \ \lim\limits_{i\rr -\f}(a_i,m_i)=(-\f,-\f),
\ee
and for $p\in (a_i,a_{i+1})$
\be\h{47}
g(p) = m_ip+\al_i.
\ee
\item [iv.] Corner points: Let $g$ be a convex function. The collection of end points of maximal interval  on which $g$ is affine is called set of corner points of $g$.
\par For a convex function $g$, define the Fenchel's dual $g^*$ by 
\be\h{48}
g^*(p) =\sup\limits_q\{pq-g(q)\}.
\ee 
\end{itemize}
\end{definition}
\par From now on we assume that functions under consideration are convex and of superlinear growth. Then we have the following 
\begin{lemma}\h{lemma41} Let $g,h,\{g_k\}$ are convex functions having superlinear growth. Then 
\begin{itemize}
\item [1.] 
\be\h{49}
\lim\limits_{|p|\rr \f} \frac{g^*(p)}{|p|}=\f.
\ee
\item [2.] Let $g_k\rr g$ in $C^0_{\mbox{loc}}(\R)$ and 
\be\h{410}
\lim\limits_{|p|\rr \f}\inf\limits_{k} \frac{g_k(p)}{|p|}=\f, \mbox{ then } g^*_k\rr g^* \mbox{ in } C^0_{\mbox{loc}}(\R).
\ee
 Furthermore for any $C\geq 0$, there exists a $p_0\geq 1$ such that for $|p|>p_0$ and for all $k$
\be\h{411}
\frac{g^*_k(p)}{|p|}\geq C+1.
\ee
\item [3.] Let $g$ be $C^1$  and strictly monotone in $(a,b)$, then $g^*$ is differentiable in $(g^\p(a), g^\p(b))$ and for $p\in (a,b)$,
\be\h{412}
(g^*)^\p(g^\p(p))=p.
\ee 
\item [4.] Let $g$ be $C^1$ and having finite number of degeneracies.  $J_i=(a_i,b_i)$ for $1\leq i \leq L$, with 
\be\h{413}
g(p)=m_ip+\al_i \ \mbox{for} \ p\in J_i.
\ee
Then $g^*$ is strictly convex and $C^1(\R\setminus\{m_1,\cdots, m_L\})$ such that 
\be
(g^*)^\p(g^\p(p))&=&p,\ \mbox{if}\ p\in \R\setminus\{m_1,\cdots, m_L\}, \h{414}\\
g^*(g^\p(p))&=& pg^\p(p)-g(p), \ \mbox{for}\ p\in\R\h{415}.
\ee
\item [5.] Let $g$ be a locally finite degenerate convex function with $a_i<a_{i+1},$ $m_{i+1}>m_i$, $\al_i\in\R$ such that (\ref{46}) and (\ref{47}) holds. Then 
\be\h{416}
g^*(p)=a_ip-g(a_i)\ \mbox{for}\ p\in[m_{i-1},m_i].
\ee 
\item [6.] Assume that $g$ is a convex function with finite number of degeneracies. Let $E\subset \R$ be a finite set. Then there exists a sequence $\{g_k\}$ of convex functions having locally finite degeneracies  such that $g_k\rr g$ in $C^0_{\mbox{loc}}(\R),$ and for all $k$,
\be
E\cup \{\mbox{corner points of}\ g\}\subset \{\mbox{corner points of }\ g_k\}, \h{417}\\
\lim\limits_{|p|\rr \f}\inf\limits_{k}\frac{g_k(p)}{|p|}=\f.\h{418}
\ee
\end{itemize}
\begin{proof} For $q$ fixed, we have 
\begin{itemize}
\item [1.] 
\begin{eqnarray*}
g^*(p) &=& \sup\limits_{r}\{pr-g(r)\}\\
&\geq & pq -g(p).
\end{eqnarray*}
Hence $$\varliminf\limits_{|p|\rr \f} \frac{g^*(p)}{|p|}\geq \pm q.$$
Now letting $\pm q \rr \f$ to obtain (\ref{410}).
\item [2.] Let $|p|\leq p_0$, then 
\be\h{419}
g^*_k(p)=\sup\limits_{q}\{pq-g_k(q)\}\geq -g_k(0). 
\ee
From (\ref{410}) choose $q_0$ such that for all $|q|\geq q_0$, $|p|\leq p_0$, for all $k$, $pq-g_k(q)<-\min\limits_k g_k(0)$, $pq-g(q)<-g(0)$. Then from (\ref{419}), there exist $|q_k|\leq q_0$, $|\ti{q}|\leq q_0$ such that 
\begin{eqnarray*}
g^*(p)&=&\sup\limits_{|q|\leq q_0}\{pq-g(q)\}=p\ti{q}-g(\ti{q})\\
g_k^*(p)&=&\sup\limits_{|q|\leq q_0}\{pq-g_k(q)\}=pq_k-g(q_k).
\end{eqnarray*}
Hence 
\begin{eqnarray*}
g^*(p)-g^*_k(p)&\geq& pq_k-g_k(q_k)-pq_k+g(q_k)\\
&=& g_k(q_k)-g(q_k)\geq -\sup\limits_{|q|\leq q_0}|g(q)-g_k(q)| 
\end{eqnarray*}
and similarly 
$$g^*_k(p)-g^*(p)\geq -\sup\limits_{|q|\leq q_0} |g_k(q)-g(q)|.$$
Therefore $$\sup\limits_{|p|\leq p_0}|g_k^*(p)-g^*(p)|\leq \sup\limits_{|q|\leq q_0}|g_k(q)-g(q)|\rr 0, \ \mbox{as}\ k\rr \f.$$
Thus $g_k^*\rr g^*$ in $C^0_{\mbox{loc}}(\R).$ Let $q=(C+2)sign \frac{p}{|p|},$ then 
$$\frac{g^*_k(p)}{|p|}\geq C + 2 -\frac{g_k(sign \frac{p}{|p|}(C+2))}{|p|}.$$
Let $p_0>0$ such that $\left|\displaystyle\frac{g_k(sign \frac{p}{|p|}(C+2))}{|p|}\right|<1$ for all $|p|>p_0,$ then for $|p|>p_0,$
$$\frac{g^*_k(p)}{|p|}\geq C+2-1=C+1.$$
This proves (\ref{411}) and hence (2).
\item [3.] Denote $g_\pm^{*^{\p}}(p)$ the right and the left derivatives of $g^*$. Let $g_\e$ be as in (\ref{42}), then $g_\e$ are uniformly convex, $C^2$ function converging to $g$ in $C^0_{\mbox{loc}}(\R).$ From (\ref{44}) and (\ref{410}), $g^*_\e\rr g^*$ in $C^0_{\mbox{loc}}(\R)$. Let $q\in (a,b)$ and $a<q_1<r_1<q<r_2<q_2<b.$ Since $g_\e^{*^{\p}}(g_\e^\p(\T))=\T$
for all $\T\in \R$, we have by convexity
\begin{eqnarray*}
q_1=g_\e^{*^{\p}}(g_\e^\p(q_1))&\leq& \frac{g_\e^*(g_\e^{\p}(r_1))-g^*_\e(g_\e^{\p}(q))}{g_\e^\p(r_1)-g_\e^\p(q)}\\&\leq& \frac{g_\e^*(g_\e^{\p}(r_2))-g^*_\e(g_\e^{\p}(q))}{g_\e^\p(r_2)-g_\e^\p(q)}\\
&\leq& g_\e^{*^{\p}}(g_\e^\p(q_2))=q_2.
\end{eqnarray*}
Now for $q\in (a,b)$ and letting $\e\rr 0, r_1\uparrow q, r_2\downarrow q$ to obtain 
$$q_1\leq g_-^{*^{\p}}(g^\p(q))\leq g_+^{*^{\p}}(g^\p(q))\leq q_2.$$
Letting $q_1,q_2 \rr q$ to obtain, 
$$g^{*^{\p}}(g^\p(q))=q.$$
This proves (3).
\item [4.] From (\ref{412}), $g^*$ is in $C^1(\R\setminus\{m_1,\cdots,m_L\})$ and satisfies 
$$g^{*^{\p}}(g^\p(p))=p,\ \mbox{for}\ p\in \R\setminus\{m_1,\cdots, m_L\}.$$
Hence $g^*$ is strictly convex. Let $g_\e$ be as in (\ref{42}), then $g_\e$ satisfies 
$$g^*_\e(g^\p_\e(p))=pg_\e^\p(p)-g_\e(p).$$
Then from (2), letting $\e\rr 0$ to obtain 
$$g^*(g^\p(p))=pg^\p(p)-g(p).$$
This proves (4).
\item [5.] From direct calculations, (\ref{416}) follows.
\item [6.] Let $F=E\cup \{\mbox{corner points of} \ g\}$. For $k>1$, define sequences $\{a_i^k\},\ \{m_i^k\},\ \{g_k\}$ such that 
$$a_i^k<a_{i+1}^k,\ m_i^k\geq m_{i-1}^k \lim\limits_{i\rr \pm\f} a_i^k=\pm\f,\ F\subset \{a_i^k\}_{i=-\f}^{\f}.$$
If $a_i^k$ and $a_{i+1}^k$ are not corner points of $g$, then choose
\begin{eqnarray*}
|a_i^k-a_{i+1}^k|&<&1/k.\\
m_i^k&=& \frac{g(a_i^k)-g(a_{i-1}^k)}{a_i^k-a_{i-1}^k}.\\
g_k(p)&=&g(a_i^k)+m_i^k(p-a_i^k)\ \mbox{if}\ p\in (a_{i-1}^k, a_i^k).
\end{eqnarray*}
Then $\{g_k\}$ is the required sequence. This proves (6) and hence the Lemma.
\end{itemize}
\end{proof}
\end{lemma}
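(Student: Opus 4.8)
The plan is to dispatch the six items in order; all of them are standard facts about the Legendre--Fenchel transform of convex functions of superlinear growth, and the single mechanism behind every step is that superlinear growth (uniform in the parameter, when there is one) forces the suprema defining $g^*$, $g^*_k$, $g^*_\e$ to be attained at points lying in one fixed compact set: if for $|p|\le p_0$ we have $g_k(q)/|q|\to\f$ as $|q|\to\f$ uniformly in $k$, then $pq-g_k(q)\to-\f$ uniformly, so every maximizer $q_k$ stays in some $[-q_0,q_0]$. I would record this at the outset and reuse it throughout.

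For (1) I would use $g^*(p)\ge pq-g(q)$ for a fixed $q$, divide by $|p|$, take $\varliminf$ as $|p|\to\f$ to get $\ge\pm q$ according to the sign of $p$, and let $\pm q\to\f$. For (2), on the common compact $q$-set I would compare the two maximization problems at the same $p$ to get $|g^*_k(p)-g^*(p)|\le\sup_{|q|\le q_0}|g_k(q)-g(q)|\to0$ uniformly for bounded $p$, i.e.\ $g^*_k\to g^*$ in $C^0_{\mathrm{loc}}$; for the growth clause I would test with $q=(C+2)\,\mathrm{sign}(p)$ in the definition of $g^*_k(p)$, note the subtracted term $g_k\big((C+2)\,\mathrm{sign}(p)\big)$ takes only two values, and divide by $|p|$. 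Items (3) and (4) I would handle together by mollification: using (\ref{42})--(\ref{44}) choose uniformly convex $C^2$ functions $g_\e$ with $g_\e\to g$ in $C^0_{\mathrm{loc}}$ (in $C^1_{\mathrm{loc}}$ on any interval where $g$ is $C^1$) and $\inf_\e g_\e(p)/|p|\to\f$, so that $g^*_\e\to g^*$ in $C^0_{\mathrm{loc}}$ by (2); for such $g_\e$ the identities $(g^*_\e)^\p\big(g^\p_\e(p)\big)=p$ and $g^*_\e\big(g^\p_\e(p)\big)=p\,g^\p_\e(p)-g_\e(p)$ are immediate from the inverse function theorem. For (3) I would then, for $a<q_1<r_1<q<r_2<q_2<b$, use monotonicity of difference quotients of the convex function $g^*_\e$ together with these identities to squeeze, after letting $\e\to0$ and $r_1\uparrow q$, $r_2\downarrow q$, the one-sided derivatives of $g^*$ at $g^\p(q)$ between $q_1$ and $q_2$, and finally let $q_1,q_2\to q$. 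For (4) I would apply (3) on each of the finitely many maximal intervals on which $g$ is strictly convex; their $g^\p$-images exhaust $\R\setminus\{m_1,\dots,m_L\}$, giving $g^*\in C^1(\R\setminus\{m_1,\dots,m_L\})$ with $(g^*)^\p(g^\p(p))=p$ there, hence $(g^*)^\p$ strictly increasing off $\{m_i\}$; and since $g\in C^1$ has no corners, $g^*$ carries no affine piece and is therefore strictly convex, while the remaining identity of (4) follows by passing to the limit in the mollified one. Item (5) is a one-line evaluation: for $p\in[m_{i-1},m_i]$ the slopes $p-m_j$ of $q\mapsto pq-g(q)$ on the successive affine pieces change sign exactly at $j=i$, so this function is nondecreasing on $(-\f,a_i)$ and nonincreasing on $(a_i,\f)$ and its maximum is $pa_i-g(a_i)$.

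For (6) I would put $F=E\cup\{\text{corner points of }g\}$ (a finite set), choose for each $k$ a locally finite strictly increasing grid $\{a^k_i\}_{i\in\mathbb{Z}}$ running to $\pm\f$, containing $F$, with mesh at most $1/k$ off the degenerate intervals of $g$, and let $g_k$ be the piecewise-affine function interpolating $g$ at that grid (merging consecutive affine pieces of equal slope). Since the chord of a convex function lies above its graph, $g_k$ is convex, $g_k\ge g$ — which transfers the superlinear lower bound uniformly in $k$ — and $g_k\to g$ in $C^0_{\mathrm{loc}}$ by the mesh condition and the local Lipschitz bound on $g$; moreover $g_k$ has a corner at every grid point where $g$ is not locally affine, in particular at each corner point of $g$ and at each point of $E$ at which $g$ is strictly convex, which gives $F\subset\{\text{corner points of }g_k\}$ in the standing setup.

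I expect the genuine work to be in the limiting arguments of (2)--(4): one must keep the uniform superlinear lower bounds active throughout, both to localize the maximizers and to obtain $g^*_k\to g^*$ and $g^*_\e\to g^*$ in $C^0_{\mathrm{loc}}$, and the regularity assertion of (4) is the delicate point, since $g^*$ is only $C^1$ away from the slopes $m_i$ of the degenerate pieces and one must track exactly where $(g^*)^\p(g^\p(p))=p$ holds. Items (1), (5), and the construction in (6) are short.
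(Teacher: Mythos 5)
Your proposal is correct and follows essentially the same route as the paper's proof: the same test-function argument for (1), the same localization of maximizers on a common compact set for (2) (including the $q=(C+2)\,\mathrm{sign}(p)$ trick for the growth bound), the same mollification-and-squeeze argument via uniformly convex $C^2$ approximations for (3) and (4), direct evaluation for (5), and piecewise-affine interpolation on a grid containing $E$ and the corner points for (6). The only differences are cosmetic: you spell out the one-line computation in (5) and the verification of convexity, superlinear growth, and corner-point containment in (6), which the paper leaves implicit.
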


\begin{proof}[Proof of Lemma \ref{lemma23}]
(1) and (2) follows from (1) and (2) of Lemma \ref{lemma41}. 
\par Let $v(x,t)=v(x,t,f)$. Taking $y=x$ to obtain 
\be\h{435}
v(x,t)\leq v_0+tf^*(0).
\ee 
\begin{eqnarray*}
v_0(y)+tf^*\left(\frac{x-y}{t}\right)&=& v_0(x)+v_0(y)-v_0(x)+tf^*\left(\frac{x-y}{t}\right)\\
&\geq& v_0(x) -M|x-y|+tf^*\left(\frac{x-y}{t}\right)\\
&=& v_0(x) +t\left[-M\left|\frac{x-y}{t}\right|+f^*\left(\frac{x-y}{t}\right)\right].
\end{eqnarray*}
Hence for $\left|\frac{x-y}{t}\right|>p_0$, 
\be\h{436}
v_0(y)+tf^*\left(\frac{x-y}{t}\right)>v_0(x)+tf^*(0).
\ee
Hence from (\ref{435}) and (\ref{436})
\begin{eqnarray*}
v(x,t)&=& \inf\limits_{\left|\frac{x-y}{t}\right|\leq p_0}\left\{v_0(y)+tf^*\left(\frac{x-y}{t}\right)\right\}\\
&=& \min\limits_{\left|\frac{x-y}{t}\right|\leq p_0}\left\{v_0(y)+tf^*\left(\frac{x-y}{t}\right)\right\}.
\end{eqnarray*}
Therefore $ch(x,t,f)\neq \phi$. For $0< s<t$, (\ref{215}), (\ref{216}) follows from the Dynamic programming principle. This proves (3).
\par Let $y\in ch(x,t,f)$ and $v(x,t)=v(x,t,f).$ Let $0<s<t$, $r(\T)=r(\T, x,y,t),$ $r(s)=\xi$. Then $\xi\in ch(x,s,t,f)$ and $y\in ch(\xi,s, f)$. Since $\frac{x-\xi}{t-s}=\frac{x-y}{t}=\frac{\xi-y}{s}$, thus
\begin{eqnarray*}
v(x,t)&=& v_0(y)+ tf^*\left(\frac{x-y}{t}\right)\\
&=& v_0(y)+sf^*\left(\frac{\xi-y}{s}\right)+(t-s)f^*\left(\frac{x-\xi}{t-s}\right)\\
&\geq& v(\xi,s)+(t-s)f^*\left(\frac{x-\xi}{t-s}\right)\\
&\geq& v(x,t).
\end{eqnarray*}
Hence $\xi\in ch(x,s,t,f)$ and $y\in ch(\xi,s,t)$.
\par Let $\xi\in ch(x,s,t,f)$ and $y\in ch(\xi,s,t)$. Then $y\in ch(x,t,f)$. Moreover if $f^*$ is strictly convex, then $(x,t), (\xi,s)$ and $(y,0)$ lies on the same straight line. For 
\begin{eqnarray*}
v(x,t,f)&=&v(\xi,s)+(t-s)f^*\left(\frac{x-\xi}{t-s}\right)\\
&=& v_0(y)+sf^*\left(\frac{\xi-y}{s}\right)+(t-s)f^*\left(\frac{x-\xi}{t-s}\right)\\
&\geq& v_0(y) +tf^*\left(\frac{x-y}{t}\right)\\
&\geq& v(x,t,f).
\end{eqnarray*}
Hence $y\in ch(x,t,f)$ and if $f^*$ is strictly convex, then $(x,t), (\xi,s), (y,0)$ lie on the same straight line.
\par Let $x_1<x_2$ and for $i=1,2$, $y_i\in ch(x_i,t,f)$ and $\g_i(\T)=\g(\T,x_i,0,t,y_i)$. Suppose $y_2<y_1$, then there exist $0<s<t$ such that $\xi=\g_1(s)=\g_2(s).$ Then from the above analysis, $\xi\in ch(\xi,s,f)$. Hence $y_1\in ch(x_2,t,f)$. Furthermore if $f^*$ is strictly convex, then $y_1\leq y_2$ and hence $\g_1$ and $\g_2$ never intersect in $(0,t)$.
\par As a consequence of this, $y_+(x_1,t,f)\leq y_+(x_2,t,f)$. Similarly $y_-(x_1,t,f)\leq y_-(x_2,t,f).$ This proves (\ref{219}) to (\ref{223}). Similar proof follows for $0<s<t$ and this proves (4).
\par Let $p_1=\sup\limits_n\{f_n(C+2), f_n(-(C+2))\},$ then for $|p|>p_1$, $q=\frac{p}{|p|}(C+2),$ we have 
$$\frac{f^*_n(p)}{|p|}\geq (C+2) -\frac{f_n\left(\frac{p}{|p|}(C+2)\right)}{|p|}\geq C+1.$$
This proves (\ref{224}).
\par Let $s=0$, $y_n\in ch(x,t,f_n)$, $v_n(x,t)=v(x,t,f_n),$ $v(x,t)=v(x,t,f)$. Since $\left|\frac{x-y_n}{t}\right|\leq p_0$, consequently  for a subsequence  let $y_n\rr y_0$. Then for $y\in \R$, we have 
\begin{eqnarray*}
v_0(y_0)+tf^*\left(\frac{x-y_0}{t}\right)&=&\lim\limits_{n\rr \f}\left\{ v_0(y_n)+tf^*_n\left(\frac{x-y_n}{t}\right)\right\}\\
&\leq&\lim\limits_{n\rr \f}\left\{ v_0(y)+tf^*_{n}\left(\frac{x-y}{t}\right)\right\}\\
&\leq&\left\{ v_0(y)+tf^*\left(\frac{x-y}{t}\right)\right\}.
\end{eqnarray*}
Hence $y_0\in ch(x,t,f)$ and $v_n(x,t)\rr v(x,t)$ as $n\rr \f$. Since $\sup\limits_n Lip(v_n, \R)\leq ||u_0||_\f$, thus $\{v_n\}$ is an equicontinuous family. Hence from Arzela-Ascoli, $v_n\rr v$ in $C^0_{\mbox{loc}}(\R).$ This proves (\ref{224}) to (\ref{226}) when $s=0$. Similar proof follows from the Dynamic programming principle. This proves (5).
\par Let $s=0$ and $(x_n,t_n)\rr (x,t)$ and $y_n\in ch(x,t_n,f_n)$ and $y_n\rr y$ as $n\rr \f$. Then from (\ref{225})
\begin{eqnarray*}
v(x,t)&=&\lim\limits_{n\rr \f} v_n(x_n,y_n)\\
&=&\lim\limits_{n\rr \f}\left\{ v_0(y_n)+t_nf^*_{n}\left(\frac{x-y_n}{t_n}\right)\right\}\\
&=&\left\{ v_0(y)+tf^*\left(\frac{x-y}{t}\right)\right\}.
\end{eqnarray*}
Therefore $y\in ch(x,t,f).$ Similarly for $s>0$ and this proves (6).
\par Since $u_{0,n}\rightharpoonup u_0$ in $L^\f \mbox{ weak }^*$ topology, therefore $v_{0,n}(x)\rr v_0(x)$ for all $x\in \R$. Since $\sup\limits_{n}Lip (v_{0,n},\R)\leq \sup\limits_{n}||u_{0,n}||_{\f}<\f$, thus $\{v_{0,n}\}$ is an equicontinuous sequence and hence converges in $C^0_{\mbox{loc}}(\R).$ Let $y_n\in ch(x,t,f_n)$ such that $y_n\rr y$. Then for any $\xi\in \R$, we have 
\begin{eqnarray*}
v_n(x,t)&=& v_{0,n}(y_n)+tf^*_{n}\left(\frac{x-y_n}{t}\right)\\
&\leq&v_{0,n}(\xi)+tf_n^*\left(\frac{x-\xi}{t}\right).
\end{eqnarray*}
 Letting $n\rr \f$ to obtain 
 $$ v_0(y)+tf^*\left(\frac{x-y}{t}\right)\leq v_0(\xi)+tf^*\left(\frac{x-\xi}{t}\right).$$
Thus $y\in ch(x,t,f)$. This proves (7) and hence the Lemma.
\end{proof}
\noindent {\it Riemann problem for piecewise convex flux:}
Let $a_i<a_{i+1},$ $m_i<m_{i+1}, \al_i\in \R$ such that 
\be
\lim\limits_{i\rr \pm \f}(a_i,m_i)&=&(\pm \f, \pm \f)\h{437}\\
m_{i-1}a_i+\al_{i-1}&=& m_ia_i+\al_i\h{438}.
\ee
Define 
\be\h{439}
g(p)=m_ip+\al_i, \ \mbox{for}\ p\in [a_i,a_{i+1}].
\ee
Then from (\ref{437}), (\ref{438}), $g$ defines a super linear piecewise affine convex function with 
\be\h{440}
g^*(p)=a_ip-g(a_i) \ \mbox{for}\ p\in[m_{i-1},m_i].
\ee
\noindent {\it Riemann problem:} Let $x_0,a,b,\in \R$ and 
\be\h{441}
u_0(x)=\left\{\begin{array}{lll}
a &\mbox{if}& x<x_0,\\
b &\mbox{if}& x>x_0.\end{array}\right.
\ee
\be\h{442}
v_0(x)=\left\{\begin{array}{lll}
ax-ax_0 &\mbox{if}& x<x_0,\\
bx-bx_0 &\mbox{if}& x>x_0.\end{array}\right.
\ee
Let $u$ be the solution of (\ref{11}), (\ref{12}) with flux $g$ and initial data $u_0$ and $v$ be the corresponding value function defined in (\ref{25}). Then $u$ is given by\\
\noindent Case (i): Let $a,b\in [a_i,a_{i+1}]$. Define 
$$L(t,g)=R(t,g)=x_0+m_it,$$ 
then 
\be\h{443}
u(x,t)=\left\{\begin{array}{lll}
a &\mbox{if}& x<L(t,g),\\
b &\mbox{if}& x>L(t,g).\end{array}\right.
\ee
\be
&&ch(x,t,g)=\{x-m_it\} \h{444}\\
&& y_-(x_0+m_it,t)=y_+(x_0+m_it,t)=x_0\h{445}.
\ee
\noindent Case (ii): Let $a_j\leq b<a_{j+1}<a_i\leq a\leq a_{i+1}.$ Define 
\be\h{446}
m=\frac{f(a)-f(b)}{a-b}, L(t,g)=R(t,g)=x_0+mt
\ee
Then 
\be\h{447}
u(x,t)=\left\{\begin{array}{lll}
a &\mbox{if}& x<L(t,g),\\
b &\mbox{if}& x>R(t,g).\end{array}\right.
\ee
\be\h{448}
ch(x,t,g)=\left\{\begin{array}{lll}
x-m_it &\mbox{if}& x<L(t,g),\\
x-m_jt &\mbox{if}& x>R(t,g).\end{array}\right.
\ee
\be
&& y_-(x_0+mt,t)=x_0+(m-m_i)t<x_0\h{449}.\\
&& y_+(x_0+mt,t)=x_0+(m-m_j)t>x_0\h{450}.
\ee
\noindent Case (iii): Let $a_i\leq a <a_{i+1}<a_j\leq b \leq a_{j+1}.$ Define 
\be
&&L(t,g)=x_0+m_it, R(t,g)=x_0+m_jt\h{451}\\
&& l_k(t)=x_0+m_kt \ \mbox{for}\ i\leq k \leq j \h{452}.
\ee
Then 
\be\h{453}
u(x,t)=\left\{\begin{array}{lll}
a &\mbox{if}& x<L(t,g),\\
a_k &\mbox{if}& l_{k-1}(t)<x<l_{k+1}(t),\\
b &\mbox{if}& x>R(t,g).\end{array}\right.
\ee
\be\h{454}
y_-(x_0+m_it,t)=y_+(x_0+m_jt,t)=x_0.
\ee
All the above properties follows easily from the direct calculation.

\begin{lemma}\h{lemma44} Let $(f,C,D)$ be a convex-convex type triplet and $\al<C<D<\B$ such that 
\be\h{426}
f(C)<L_{\al,\B}(C), f(D)<L_{\al,\B}(D).
\ee
Then
\be\h{427}
f^\p(\al)<\frac{f(\B)-f(\al)}{\B-\al}<f^\p(\B).
\ee
\begin{proof}
Suppose $\frac{f(\B)-f(\al)}{\B-\al}\leq f^\p(\al),$ then by convexity of $f$ in $(-\f,C]$, for $x\in [\al,C]$.
$$\frac{f(\B)-f(\al)}{\B-\al}\leq f^\p(\al)\leq f^\p(x).$$
Integrating from $\al$ to $C$ to obtain 
$$L_{\al,\B}(C)=f(\al)+\frac{f(\B)-f(\al)}{\B-\al}(C-\al)\leq f(C),$$
contradicting (\ref{426}).
\par Suppose $f^\p(\B)\leq \frac{f(\B)-f(\al)}{\B-\al},$ then by convexity of $f$ in $[D,\f)$, for all $x\in[D,\B]$, 
$$f^\p(x)\leq f^\p(\B)\leq \frac{f(\B)-f(\al)}{\B-\al}.$$
Integrating from $D$ to $\B$ to obtain 
$$f(D)\geq f(\B)+\frac{f(\B)-f(\al)}{\B-\al}(D-\B)=L_{\al,\B}(D).$$
Contradicting (\ref{426}). This proves the Lemma.
\end{proof}
\end{lemma}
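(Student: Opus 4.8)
The plan is to establish the two inequalities independently, each by a short proof by contradiction that uses only the monotonicity of $f^\p$ on the two regions where $f$ is convex. It is convenient to abbreviate $s=\frac{f(\B)-f(\al)}{\B-\al}$, the slope of the chord joining $(\al,f(\al))$ to $(\B,f(\B))$, so that $L_{\al,\B}(\T)=f(\al)+s(\T-\al)=f(\B)+s(\T-\B)$; the goal then becomes $f^\p(\al)<s<f^\p(\B)$.

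First I would prove $f^\p(\al)<s$. Assume instead that $s\le f^\p(\al)$. Since $(f,C,D)$ is a convex--convex type triplet, $f$ is convex on $(-\f,C]$ and so $f^\p$ is non-decreasing there; because $\al<C$, this forces $f^\p(x)\ge f^\p(\al)\ge s$ for all $x\in[\al,C]$. Integrating this over $[\al,C]$ (legitimate since $f\in C^1$) gives $f(C)-f(\al)\ge s(C-\al)$, i.e. $f(C)\ge f(\al)+s(C-\al)=L_{\al,\B}(C)$, which contradicts the hypothesis $f(C)<L_{\al,\B}(C)$.

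Next I would prove $s<f^\p(\B)$ in the mirror-image way. Assume $f^\p(\B)\le s$. Convexity of $f$ on $[D,\f)$ makes $f^\p$ non-decreasing there, so $f^\p(x)\le f^\p(\B)\le s$ for all $x\in[D,\B]$ (here it is $D<\B$ that is used). Integrating over $[D,\B]$ yields $f(\B)-f(D)\le s(\B-D)$, hence $f(D)\ge f(\B)-s(\B-D)=L_{\al,\B}(D)$, contradicting $f(D)<L_{\al,\B}(D)$. The two steps together give the claim.

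I do not expect a genuine obstacle here: the argument is an elementary convexity estimate, essentially a one-sided mean value inequality. The only points requiring care are that the chain $\al<C<D<\B$ places $[\al,C]$ and $[D,\B]$ entirely inside the two convexity regions of $f$ (this is exactly where the convex--convex hypothesis on the triplet $(f,C,D)$ enters), and that the $C^1$ regularity of $f$ is what lets me write $f(b)-f(a)=\int_a^b f^\p$; if $f$ were merely locally Lipschitz one would instead compare $f$ against the affine function determined by the relevant one-sided derivative and use monotonicity of the a.e.\ derivative of a convex function, which closes the argument in the same fashion.
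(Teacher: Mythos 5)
Your proposal is correct and follows essentially the same route as the paper's own proof: both halves argue by contradiction, use the monotonicity of $f^\p$ on $(-\f,C]$ and $[D,\f)$ respectively, and integrate the resulting derivative bound over $[\al,C]$ and $[D,\B]$ to contradict (\ref{426}). The only additions are cosmetic (the abbreviation $s$ and the closing remark on the merely Lipschitz case), so nothing further is needed.
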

\begin{figure}[ht]        \centering
        \def\svgwidth{0.8\textwidth}
        \begingroup
    \setlength{\unitlength}{\svgwidth}
  \begin{picture}(1.4,0.76363752)%
    \put(0,0.1){\includegraphics[width=0.95\textwidth]{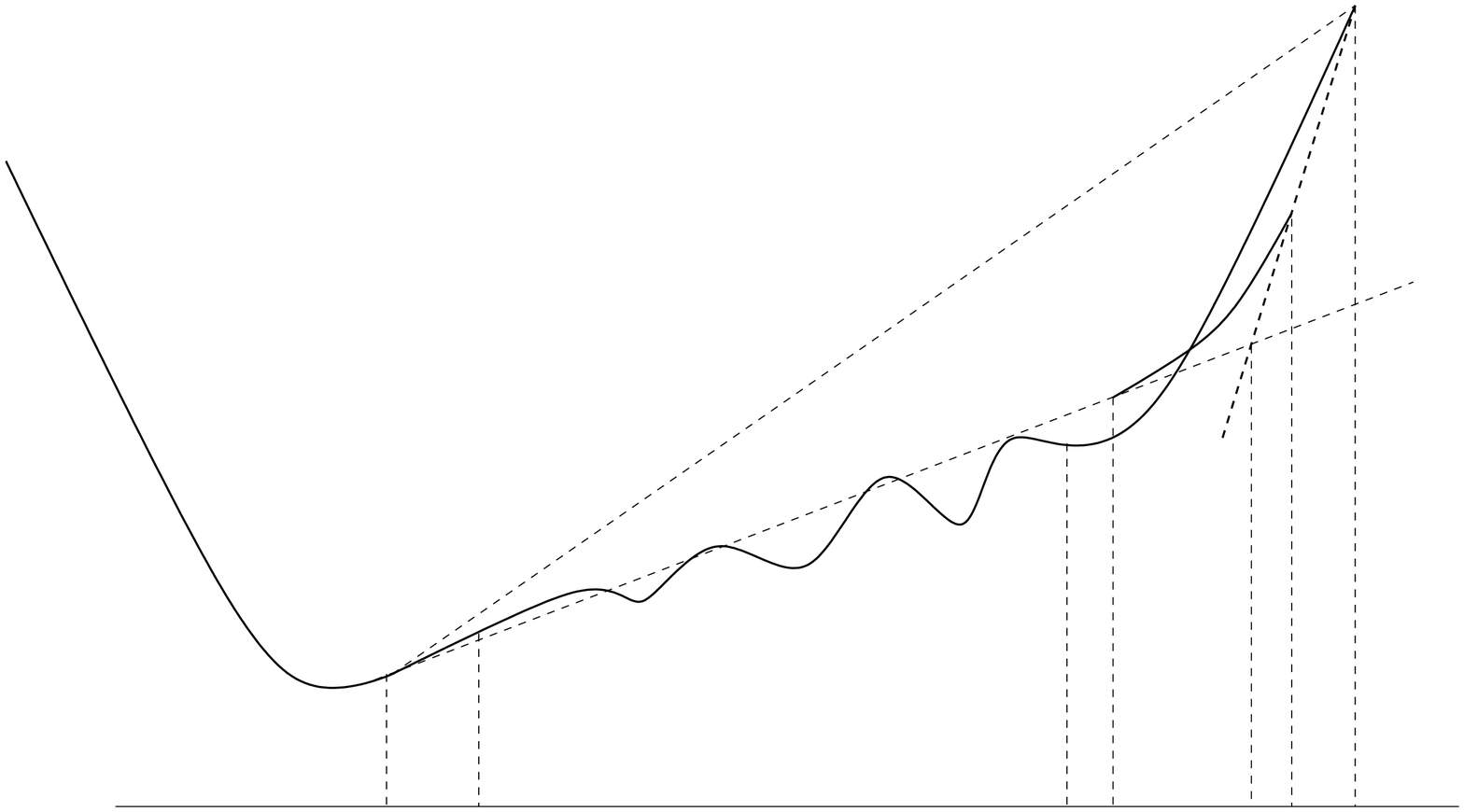}}%
       \put(0.3,0.075){\color[rgb]{0,0,0}\makebox(0,0)[lb]{\smash{$\al$}}}%
              \put(0.91,0.5){\color[rgb]{0,0,0}\makebox(0,0)[lb]{\smash{$Q(x)$}}}%
            \put(0.3729,0.075){\color[rgb]{0,0,0}\makebox(0,0)[lb]{\smash{$C$}}}%
                \put(0.84263,0.075){\color[rgb]{0,0,0}\makebox(0,0)[lb]{\smash{$D$}}}%
                    \put(.892468751,0.075){\color[rgb]{0,0,0}\makebox(0,0)[lb]{\smash{$x_1$}}}%
                                        \put(.9992468751,0.075){\color[rgb]{0,0,0}\makebox(0,0)[lb]{\smash{$d$}}}%
   \put(1.0299192468751,0.075){\color[rgb]{0,0,0}\makebox(0,0)[lb]{\smash{$x_2$}}}
    \put(1.10124,0.075){\color[rgb]{0,0,0}\makebox(0,0)[lb]{\smash{$\B$}}}%
        \put(1.16,0.517){\color[rgb]{0,0,0}\makebox(0,0)[lb]{\smash{$L_{\al}$}}}%
                \put(0.95,0.379517){\color[rgb]{0,0,0}\makebox(0,0)[lb]{\smash{$L_{\B}$}}}%
                \put(0.639,0.506097){\color[rgb]{0,0,0}\makebox(0,0)[lb]{\smash{$L_{\al,\B}$}}}%
            \put(0.0159,0.4922){\color[rgb]{0,0,0}\makebox(0,0)[lb]{\smash{$f$}}}%
  \end{picture}%
\endgroup
        \caption{Illustration for the convex modification of $f$}
        \label{Fig4}
\end{figure}
\begin{figure}[ht]        \centering
        \def\svgwidth{0.8\textwidth}
        \begingroup
    \setlength{\unitlength}{\svgwidth}
  \begin{picture}(1.4,0.6855363752)%
    \put(0,0.1){\includegraphics[width=0.95\textwidth]{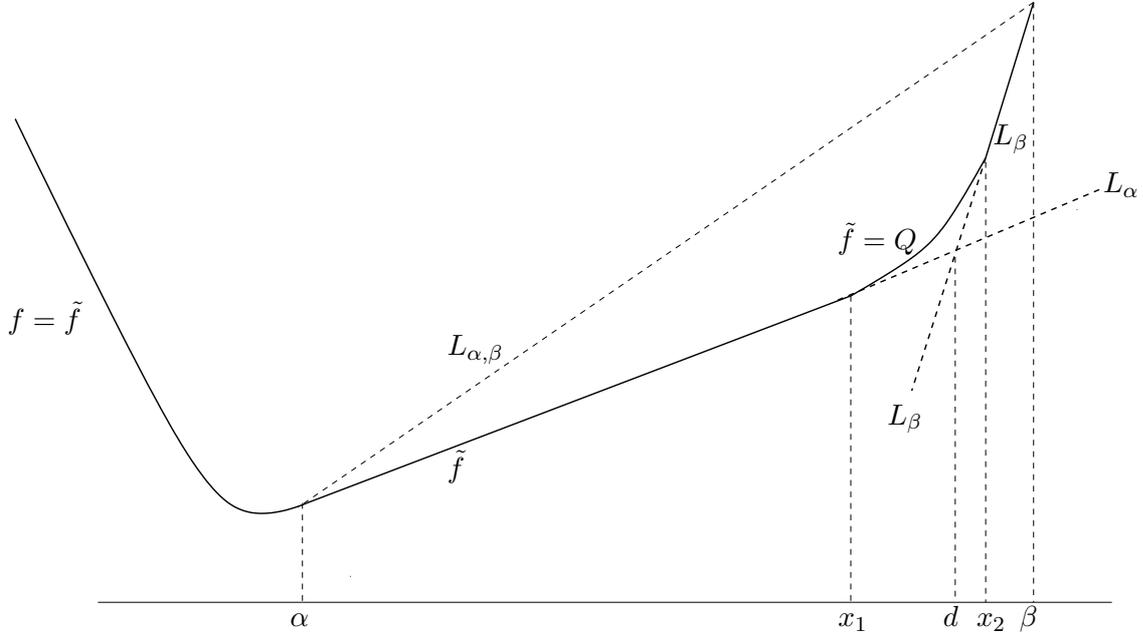}}%
                    \put(.889892468751,0.075){\color[rgb]{0,0,0}\makebox(0,0)[lb]{\smash{$x_1$}}}%
                                                    \put(.29892468751,0.075){\color[rgb]{0,0,0}\makebox(0,0)[lb]{\smash{$\al$}}}%
                                        \put(1.00299192468751,0.075){\color[rgb]{0,0,0}\makebox(0,0)[lb]{\smash{$d$}}}
                                                                        \put(.8890719943,0.485199517){\color[rgb]{0,0,0}\makebox(0,0)[lb]{\smash{$\ti{f}=Q$}}}%
    \put(1.0390910124,0.075){\color[rgb]{0,0,0}\makebox(0,0)[lb]{\smash{$x_2$}}}%
        \put(1.085910124,0.075){\color[rgb]{0,0,0}\makebox(0,0)[lb]{\smash{$\B$}}}%
        \put(1.176,0.547){\color[rgb]{0,0,0}\makebox(0,0)[lb]{\smash{$L_{\al}$}}}%
                \put(0.943,0.29517){\color[rgb]{0,0,0}\makebox(0,0)[lb]{\smash{$L_{\B}$}}}%
                                \put(1.05719943,0.599517){\color[rgb]{0,0,0}\makebox(0,0)[lb]{\smash{$L_{\B}$}}}%
                \put(0.46840639,0.370566097){\color[rgb]{0,0,0}\makebox(0,0)[lb]{\smash{$L_{\al,\B}$}}}%
                                \put(0.46840639,0.2370566097){\color[rgb]{0,0,0}\makebox(0,0)[lb]{\smash{$\ti{f}$}}}%
            \put(-0.006,0.39922){\color[rgb]{0,0,0}\makebox(0,0)[lb]{\smash{$f=\ti{f}$}}}%

  \end{picture}%
\endgroup
        \caption{Convex modification $\ti{f}$}
\label{Fig5}
\end{figure}
From Lemma  \ref{lemma44} we make a convex modification of $(f,C,D)$ as follows. \\
\par  For $i=1,2$, let $(a_i,b_i)\in \R^2$ such that 
\be\h{420}
a_1=a_2\ \mbox{whenever } b_1=b_2.
\ee
For $b_1+b_2\neq 0, x_1\in \R$, define 
\be
x_2&=& x_1+\frac{2(a_2-a_1)}{b_2+b_1}, \h{421}\\
Q(x)&=&\frac{b_2-b_1}{2(x_2-x_1)}(x-x_1)^2 +b_1(x-x_1)+a_1.\h{422}
\ee
Then 
\begin{eqnarray*}
(Q(x_1), Q^\p(x_1))&=&(a_1,b_1)\\
Q(x_2)&=& \frac{(b_2-b_1)}{2}(x_2-x_1) + b_1(x_2-x_1) + a_1\\
&=& \frac{1}{2} (b_2+b_1)(x_2-x_1) +a_1\\
&=& a_2-a_1+a_1=a_2.\\
Q^\p(x_2)&=&\frac{b_2-b_1}{x_2-x_1}+b_1=b_2.
\end{eqnarray*}
Let $\al<C<D<\B$ satisfying (\ref{426}). Let 
\begin{eqnarray*}
&&f^\p(\al)=b_1, f^\p(\B)=b_2\\
&& l_1(x)=f(\al)+f^\p(\al)(x-\al)\\
&& l_2(x)=f(\B)+f^\p(\B)(x-\B)\\
&& d=\frac{f(\al)-\al f^\p(\al)-f(\B)+\B f^\p(\B)}{f^\p(\B)-f^\p(\al)}
\end{eqnarray*}
is the point of interaction of the tangent lines $l_1$ and $l_2$. Observe that from (\ref{427}) we have 
\begin{eqnarray*}
\begin{array}{lll}
\al<d<\B,
\min(f(\al),f(\B))<l_1(d)<\max(f(\al),f(\B)).
\end{array}
\end{eqnarray*}
\par Let $\al<x_1<d<x_2<\B$ and $a_1=l_1(x_1), a_2=l_2(x_2)$ and define $Q$ as in (\ref{422}). \\
\noindent Case (i): Suppose $f^\p(\al)+f^\p(\B)=b_1+b_2=0$. Then choose $l_1(x_1)=l_2(x_2)$.
Then $(Q(x_1), Q^\p(x_1))=(a_1,b_1)$ and 
\begin{eqnarray*}
Q(x_2)&=&\frac{1}{2}(b_2-b_1)(x_2-x_1)+b_1(x_2-x_1)+l_1(x_1)\\
&=& (b_1+b_2)(x_2-x_1)+l_1(x_1)\\
&=& l_1(x_1)\\
&=& l_2(x_2).\\
Q^\p(x_2)&=& (b_2-b_1)+b_1=b_2.
\end{eqnarray*}
Hence $(Q(x_1), Q^\p(x_1), Q(x_2), Q^\p(x_2))=(a_1,b_1,a_2,b_2)$  and uniformly convex function.\\
\noindent Case (ii): Let $f^\p(\al)+f^\p(\B)=b_1+b_2\neq 0$. Then 
\begin{eqnarray*}
a_2-a_1&=& l_2(x_2)-l_1(x_1)\\
&=& f(\B)-f(\al)-\B f^\p(\B)-\al f^\p(\al)+f^\p(\B)x_2-f^\p(\al)x_1\\
&=& -d(f^\p(\al)+f^\p(\B))+f^\p(\B)x_2-f^\p(\al)x_1\\
&=& (f^\p(\B)+f^\p(\al))(x_2-d)+f^\p(\al)(2d-x_1-x_2)\\
&=& f^\p(\B)(x_1+x_2-2d) +(f^\p(\al)+f^\p(\B))(d-x_1).
\end{eqnarray*}
Suppose $f^\p(\al)+f^\p(\B)>0$, then from (\ref{427}), $f^\p(\B)>0$. Choose $x_1$ and $x_2$ such that $x_1+x_2>2d$. Then $a_2-a_1>0.$ Suppose $f^\p(\al)+f^\p(\B)<0$, then from (\ref{427}), $f^\p(\al)<0$. Choose $x_1$ and $x_2$ such that $x_1+x_2\leq 2d$. Then $a_2-a_1>0$. Hence $Q(x)$ as in (\ref{422}) is a uniformly convex function satisfying $(Q(x_1),Q^\p(x_1), Q(x_2),Q^\p(x_2))=(a_1, b_1,a_2,b_2)$.

\noindent Case (iii): $f^\p(\B)\leq 0.$ Then from (\ref{427}), $f(\al)>f(\B)$, $f^\p(\al)<0$ and $l_2(x)<l_1(x)$ for $x>d$. From Lemma \ref{lemma44}, choose $\al<x_1<d<x_2<\B$, $a_1=l_1(x_1)$, $a_2=l_2(x_2)$ such that $Q$ is a smooth convex function satisfying $(Q(x_1),Q^\p(x_1), Q(x_2),Q^\p(x_2))=(a_1, b_1,a_2,b_2)$. Define $\ti{f}$ by
\be\h{428}
\ti{f}(x)=\left\{\begin{array}{llllll}
f(x) &\mbox{if}& x\notin(\al,\B),\\
l_1(x) &\mbox{if}& \al\leq x \leq  x_1,\\
Q(x) &\mbox{if}&  x_1 \leq  x \leq  x_2,\\
l_2(x) &\mbox{if}& x_2 \leq x \leq \B. \end{array}\right.
\ee
Then $\ti{f}$ is in $C^1(\R)$, convex and satisfy $\ti{f}(x)=f(x)$, for $x\notin (\al,\B)$.
 
 \noindent \textit{Convex modification (see figures \ref{Fig4} and \ref{Fig5}):} Given $(f,C,D)$ of convex-convex type triplet and $\al<C<D<\B$ satisfying (\ref{426}). Then $\ti{f}$ constructed above is called a convex modification of $f$ with respect to $\al,\B$.

\noindent \textbf{Acknowledgements:} Both the authors would like to thank Gran Sasso Science Institute, L'Aquila, Italy, as  the work has been initiated  from there. First author acknowledge the support from Rajaramanna fellowship.  The second author would like to thank Inspire research grant for the support.

 \end{document}